\documentclass[12,reqno]{amsart}
\usepackage{amsfonts}
\usepackage[active]{srcltx}
\usepackage{amsmath}
\usepackage{amssymb}
\usepackage{amsthm}
\usepackage{mathtools}
\usepackage{bm} %for bold math symbol
\usepackage{empheq}

\usepackage{enumitem}
\usepackage{wasysym}
\usepackage{verbatim}
%%%%%%%%%%%%%%%%%%%%%%%%%%%%%%%%%%%%%%%%%%%%%%%%%%%%%%%%%%%%%%%%%%%%%%%%%%%%%%%%%%%%%%%%%%%%%%%%%%%
\usepackage{graphicx}
\usepackage[
bookmarks=true,         %generates bookmarks for all entries in the "Table of Contents"
bookmarksnumbered=true, %includes chapter-/header-/section-/subsection-/... -
                       %numbers for the entries in the "Table of Contents".
colorlinks=true, pdfstartview=FitV, linkcolor=blue, citecolor=blue,
urlcolor=blue]{hyperref}
%TCIDATA{OutputFilter=LATEX.DLL}
%TCIDATA{LastRevised=Monday, January 11, 2010 15:23:02}
%TCIDATA{<META NAME="GraphicsSave" CONTENT="32">}
\usepackage{microtype}
\setcounter{MaxMatrixCols}{10}
\theoremstyle{plain}
\newtheorem{theorem}{Theorem}[section]

\newtheorem{lemma}[theorem]{Lemma}

\newtheorem{proposition}[theorem]{Proposition}

\renewenvironment{proof}[1][Proof]{\textbf{#1.} }{\ \rule{0.5em}{0.5em} \par }
\theoremstyle{remark}

\theoremstyle{definition}
\newtheorem{remark}[theorem]{Remark}

\newtheorem{example}[theorem]{Example}
\newtheorem{hypothesis}[theorem]{Hypothesis}

\def\PP{\mathbb{P}}
\def\RR{\mathbb{R}}
\def\EE{\mathbb{E}}
\def\BB{\mathbb{B}}

\def\cB{{\mathcal B}}

\def\cF{{\mathcal F}}

\def\be{{\beta}}
\def\de{{\delta}}
\def\la{{\lambda}}
\def\si{{\sigma}}

\def\cT{{\mathcal T}}

\def\De{{\Delta}}

\def\cL{{\mathcal L}}
\def\Om{{\Omega}}
\def\om{{\omega}}
\def\al{{\alpha}}

\def\be{{\beta}}
\def\Ga{{\Gamma}}
\def\ga{{\gamma}}
\def\de{{\delta}}
\def\De{{\Delta}}

\def\si{{\sigma}}

\def\la{{\lambda}}
\def\La{{\Lambda}}
\def\vare{{\varepsilon}}
\def \eref#1{\hbox{(\ref{#1})}}
\def\th{{\theta}}
\def\th{{\theta}}
\def\si{{\sigma}}
\def\al{{\alpha}}

%%%%%%%%%%%%%%%%%%%%%%%%%%%%%%%%%%%%%%%%%
%%%%%%%%%%%%%%% Greek %%%%%%%%%%%%%%%%%%%%%%%
%%%%%%%%%%%%%%%%%%%%%%%%%%%%%%%%%%%%%%%%%%

\def\eref#1{\hbox{(\ref{#1})}}

%%%%%%%%%%%%%%%%%%%%%%%%%%%%%%%%%%%%%%%%
%%%%%%%%%%%%%%% Brackets %%%%%%%%%%%%%%%%%%%%
%%%%%%%%%%%%%%%%%%%%%%%%%%%%%%%%%%%%%%%%

%%%%%%%%%%%%%%%%%%%%%%%%%%%%%%%%%%%%%%%%%
%%%%%%%%%%%%%%% Mathbb %%%%%%%%%%%%%%%%%%%%%%%
%%%%%%%%%%%%%%%%%%%%%%%%%%%%%%%%%%%%%%%%%%

\setcounter{equation}{0}

\newcommand{\HH}{\mathcal H}

\newcommand{\DD}{\mathbb D}

\let\Section=\section
\def\section{\setcounter{equation}{0}\Section}

\subjclass[2000]{60H10, 60G22,      65C30,  60H07,  60H05, 26A33.}
\keywords{Fractional Brownian Motions, It\^o type stochastic integral, It\^o formula, It\^o type stochastic differential equation, Malliavin derivative, characteristic curve equation,  fractional  Picard iteration, contraction principle,  existence and uniqueness of solution, linear and quasilinear equations, global solution.   }

\begin{document}

\title[It\^o  SDE driven by fractional Brownian motion]{\bf  It\^o     stochastic differential equations
driven by fractional Brownian motions of Hurst parameter $H>1/2$
}

\author{  Yaozhong Hu }
\address{Department of Mathematics,\   University of Kansas
 \\  
%Department of Mathematics,\   University of Olo
 405 Snow Hall, \   Lawrence, \  KS 66045-2142 
 \\ 
Email: \ yhu@ku.edu
}
\date{}

\maketitle

\begin{abstract}This paper studies the existence and uniqueness of solution of 
 It\^o    type stochastic differential equation $dx(t)=b(t, x(t), \om)dt+\si(t,x(t), \om) d B(t)$,  where $B(t)$ is a fractional Brownian motion of Hurst parameter $H>1/2$ and $dB(t)$ is  the It\^o    differential 
defined 
  by using   Wick product or divergence operator. The coefficients $b$ and $\si$ are random and  anticipative. Using the relationship 
   between the It\^o       and pathwise integrals we first write the equation as a stochastic differential equation 
involving pathwise integral plus a Malliavin derivative term. To handle this Malliavin derivative term the  equation  is then further reduced to a system of characteristic  equations  without Malliavin derivative, which   is then   solved by a careful analysis of Picard iteration,  with a new technique to replace the Gr\"onwall  lemma which is no longer applicable.  The solution of this   system of characteristic equations  is  then  applied to solve the original It\^o   stochastic differential equation  up to a positive random  time. In  special  linear and quasilinear cases the global
solutions  are proved to exist uniquely. 
\end{abstract} 

\section{Introduction}
Let $T\in(0,\infty)$ be a given fixed number and let $\Om$ be the Banach
space of continuous real-valued functions   $f: [0, T]\rightarrow
\RR$ with the supremum  norm:
$\|f\|=\sup_{0\le t\le T} |f(t)|$.  
%Let $\cF$ be the Borel $\si$-algebra
%of $\Om$ and f
For any $t\in [0, T]$  define the coordinate mapping 
$B(t): \Om\rightarrow
\RR$ by $B(t)(\om)=\om(t)$. % where and in the remaining part of
%this paper $\om$ denotes a generic element of $\Om$.
Let  $\PP=\PP^H$ be the probability measure on the Borel $\si$-algebra
$  \cF $ of $\Om$ such that $B=(B(t), 0\le t\le T)$  is a fractional
Brownian motion of Hurst parameter $H\in (0, 1)$.  Namely,  on the probability
space $(\Om, \cF, \PP)$, $B=(B(t), 0\le t\le T)$ is a centered
(mean $0$) Gaussian process of covariance given by
\[
\EE\left(B(t)B(s)\right)=\frac12 \left(t^{2H}+s^{2H}-|t-s|^{2H}\right)\,.
\]
Throughout the  paper, we consider the case $H>1/2$.   The natural
filtration generated by  $B(t)$ is denoted by $\cF_t$.  
For any $\be\in (0, H)$,  it is known that  
%we can choose a version of
%$B(t)$ such that 
almost surely, $B(t)$ is H\"older 
continuous of exponent $\be$.   This means that there is a measurable subset
of $\Om$ of probability one  such that any element $\om$
in this set  $B(\cdot, \om)$ is H\"older continuous of exponent
$\be$.  We shall work on this subset of $\Om$ and with an abuse
of notation we shall    denote this subset still by $\Om$.    We also choose and
fix such a  $\be\in (1/2, H)$ throughout the paper.  

Fractional
Brownian motions have been received a great attention in recent
years. Stochastic integral,  It\^o    formula, and many other
basic  results have been established. The stochastic
differential equation of the form
\begin{equation}\label{e.1.1}
dx(t)=b(t, x(t))dt+\si(t, x(t))\de B(t)\,, \ \ 0\le t\le T\,,\quad
x(0)\quad \hbox{is given}\,,
\end{equation}
has been studied by many authors and has found many applications
in various fields, where $\de$ denotes the {\it
pathwise type integral}  defined by using Riemnan sum.  
%(see Section 2 for its  definition). 
Among
many references we refer to    \cite{BHOZ,hustochastics,misura} and in particular the references therein  for more
details.         Lyons and his collaborators'  work on rough path
analysis is  a  powerful tool  in analyzing  this type of equation (see
\cite{FV,lyonsqian}   and the references therein).

However,  in the case when $B$ is the Brownian motion,
the most studied equation is of It\^o    type. Namely,
 in  \eref{e.1.1}   It\^o    stochastic differential is used instead of the
 pathwise one.  There are many reasons for the use of  
 It\^o    stochastic differential  in classical Brownian motion case. 
 One reason is from the modeling point of view.  If one uses \eqref{e.1.1}
 to model the state of a certain system, 
then the term $b(t, x(t))$  represents {\it all} the ``{\it mean rate of change}"   
of the system  and the term 
  $\si(t, x(t))\de B(t)$ is  the ``{\it random
perturbation}",  which has a zero mean contribution.   
%This is a property of It\^o    integral. 
%This property makes It\^o      stochastic differential
%equation popular in application. In fact, 
%It\^o    differential equations are also easy to handle 
%mathematically than the Stratonovich ones.   

When we use  stochastic differential equations
driven by   fractional Brownian motion to model natural or social system,
we also wish to separate the two parts: the part $b(t, x(t))$ represents {\it all}  the mean rate of change
and the part $\si(t, x(t))\de B(t)$ is merely the random 
perturbation,  which should have a mean $0$.   
%integral does not have the property that $\EE \int_0^T \si(t, x(t))\de B(t)=0$.
%We wish to replace the pathwise integral by a mean zero 
%stochastic integral.
%We also wish that  in equation \eqref{e.1.1}, 
  In another word, 
it is natural to require  the {\it mean}
  of  $\si(t, x(t))\de B(t)$ in \eqref{e.1.1} to be  zero. 
%  This way, we can separate 
%the mean rate from the purely random perturbation in the equation. 
%This is particularly  important when one want to 
%identity  the coefficients $b$ and $\si$ in the model.  
% since mean rate
%of change of the system has  already  been contained in $b(t, x(t))$.
On the other hand,   it is   well-known from the work of  \cite{duncan, humams} that  if $H\not=1/2$,    then the pathwise type
stochastic integral with respect to fractional Brownian motion may not be of zero mean.  Namely, it is possible that
$\EE \left[\int_0^T \si(t, x(t))\de B(t)\right] \not= 0$.  Motivated by this
phenomenon,  an  It\^o    type stochastic integral $\int_0^T \si(t,
x(t))d B(t)$ is introduced with the use of Wick product in     
\cite{duncan, huoksendal}    (see    \cite{BHOZ,humams}
and   the
references therein).  This integral has the property that the expectation 
 $\EE\left[
\int_0^T \si(t, x(t))d B(t)\right]$ is always equal to zero. This motivates 
to replace the pathwise integral in \eqref{e.1.1} by the It\^o    one.
In other words,  we are led  to consider the  following {\it It\^o     stochastic
differential equation}
\begin{equation}\label{e.ito-equation}
dx(t)=b(t, x(t))dt+\si(t, x(t))d  B(t)\,, \ \ 0\le t\le T\,,\quad
x(0)\quad \hbox{is given}\,,
\end{equation}
where $dB(t)$ denotes the {\it It\^o    type stochastic differential}
  (divergence type integral) defined in \cite{duncan} (see  also  \cite{BHOZ, humams, huoksendal}  and references therein),    $b$
and $\si$ are two real-valued functions from $ [0, T]\times \RR$ to
$\RR$ satisfying
 some conditions that will be made precise  later (we shall allow them to be random).
%Of course this equation is understood as
%\begin{equation}\label{e.1.3}
%x(t)=x(0)+\int_0^t b(s, x(s))ds+\int_0^t
%\si(s, x(s))d  B(s)\,, \ \ 0\le t\le T\,.
%\end{equation}
To solve the above equation \eqref{e.ito-equation}, 
a natural  approach  to try      is
   the Picard iteration.  To explain the difficulty let us define $x_n(t)$ %, $0\le t\le
%T$ and $n=1, 2, \cdots$
by the following recursive formula (naive Picard iteration):
\begin{equation}\label{e.1.3}
x_n(t)  =x(0)+\int_0^t b(s, x_{n-1}(s)  )dt+\int_0^t \si(s, x_{n-1}(s)  )d
B(s)\,, \quad  0\le t\le T\,,  
\end{equation}
 where  $  n=1, 2, \cdots
$ and $x_0(t) :=x(0)$ for all $0\le t\le T$.
Consider the above stochastic integral  term on the right hand side.
An  It\^o    isometry formula states   that
\begin{eqnarray*}
&&\EE \left(\int_0^t \si(s, x_{n-1}(s) )d  B(s)\right)^2\\ 
 &=& \EE
\left\{  \int_0^t\int_0^t \phi(u, v)  \si(u, x_{n-1}(u) ) \si(v,
x_{n-1}(v) )  dudv
\right.\\
&&\quad \left. + \int_0^t\int_0^t     \si_x (u, x_{n-1}(u) ) \si_x (v, x_{n-1}(v) ) 
\DD^\phi_v x_{n-1}(u)    \DD^\phi_u x_{n-1}(v)
dudv   \right\} \,,
\end{eqnarray*}
where 
$\phi(u,v)=H(2H-1)|u-v|^{2H-2}$,  $\si_x(t,x)$ denotes the partial derivative of $\si(t,x)$ with respect to $x$ and $\DD^\phi_u$ is the Malliavin derivative (see forthcoming  
definition  \eqref{e.def-Dphi}  in next section).  
From this identity one sees  that   to bound  the $L^2$ norm of $x_n $ one has to 
use   the  $L^2$ norm of $x_{n-1} $ plus the $L^2$ norm of the Malliavin
derivative of $x_{n-1} $.  In a similar way to bound the Malliavin
derivative   one has
 to use the  second order Malliavin derivative,  and so on.   
 Thus, we see that  the naive  Picard iteration approximation 
 %is not effective
 cannot be applied   to study the It\^o  stochastic differential
  equation \eref{e.ito-equation}.      

We shall use a different approach to study \eqref{e.ito-equation}.  
To explain this approach  we   first use    the relationship  between pathwise
and It\^o    stochastic integrals (established for example in   \cite[Theorem 3.12]{duncan}.  See also \cite{BHOZ} and \cite{humams}) 
to  write the equation \eref{e.ito-equation} as
\begin{eqnarray} 
x(t)&=&x(0)+\int_0^t b(s, x(s))ds+\int_0^t
\si(s, x(s))\de  B(s)\nonumber\\
&&\qquad -\int_0^t   \si_x(s, x(s)) \DD^\phi_s  x(s) ds\,,
 \ \ 0\le t\le T\,.\label{e.1.4}
\end{eqnarray}
Thus,   the equation \eref{e.ito-equation} is reduced  
to an equation involved the pathwise integral plus a Malliavin derivative term.
%This means that  the equation \eref{e.ito-equation} is    an stochastic differential
%equation involving the first order Malliavin derivative. 
%In the classical Brownian motion case, if $x(s)$ is adapted, then
%it is easy to see that $\DD^\phi_s  x(s)=0$.  
 To understand the character
of this equation, we consider heuristically the dependence on the random element
$\om\in \Om$ of the random  variable 
$x(t, \om)$ as a function of infinitely many variables (defined on $\Om$). 
We write it formally as 
$x(t, \om)=u(t, \tilde \ell_1, \cdots, \tilde \ell_n , \cdots)$,  where $u(t, x_1, x_2, \cdots)$ is a function of infinitely many variable and 
$\ell_1, \cdots \ell_, \cdots$ are smooth deterministic functions such that
$\langle \ell_i\,, \ell_j\rangle_{\HH_\phi}=\begin{cases}1& \hbox{when $i=j$} \\
0&\hbox{otherwise}\end{cases}$ (see 
 the forthcoming   definition \eqref{e.def-Hphi} for  the Hilbert space $\HH_\phi$.
 We usually assume that $\left\{\ell_1, \cdots \ell_, \cdots\right\}$
 to be an orthonormal basis of $\HH_\phi$)
and $\tilde \ell_i=\int_0^T\ell_i(s) dB(s)$.  Thus $\DD_s^\phi x(s)
=\sum_{i=1}^\infty \phi_i(s) \frac{\partial u}{\partial x_i}  (s, x_1, x_2, \cdots) $ with
$\phi_i(s)=\int_0^T \phi(s, r) \ell_i(r) dr$.  
With the above notations,  the equation \eqref{e.1.4} can be written as (we omit the explicit dependence of $u$ on 
$(x_1, x_2, \cdots)$)
\begin{eqnarray} 
u(t)&=&u(0)+\int_0^t b(s, u(s))ds+\int_0^t
\si(s, u(s))\de  B(s)\nonumber\\
&&\qquad -\sum_{i=1}^\infty \int_0^t \phi_i(s) 
\si_x(s, u(s))  \frac{\partial u}{\partial x_i} ds\,,
 \ \ 0\le t\le T\,.\label{e.1.5}
\end{eqnarray}
%
%
% To solve this equation,
%our idea is to consider the dependence of the random variable $x(t)$ on
%the random element $\om$ as a function of infinitely many variables. Thus,
%The Malliavin derivative $\DD^\phi_s  x(s)$ is some kind of first order directional
%partial derivative.  Thus,  the equation \eqref{e.1.4} 
This   is a 
first order hyperbolic partial differential equation driven by fractional
Brownian  motion for a function of infinitely many variables.  We shall use the idea of characteristic curve approach 
from the theory of  the first order (finitely many variables) 
hyperbolic equations  (see for example \cite{li, serre1, serre2}).  But since the classical theory is not directly applicable here
we need to find the characteristic curve and prove the existence and uniqueness of the
solution.    Let us also point out that 
  stochastic hyperbolic equations has also been studied by several  authors 
(see e.g.  \cite{Kunita}),   which is different from  ours.  
%            Nevertheless we shall also use the characteristic
%            curve method to solve this equation.

%Since our approach works well also for anticipative random coefficients
%we shall assume thtoughout the paper that $b=b(t,x,\om)$ and $\si
%=\si(t,x,\om)$ are random and 
%  anticipative. 
Now we explain our approach  to solve \eqref{e.1.4}.   
First, we construct  the following
coupled system of characteristic equations:
% for $(\Ga(t, \om), z(t, \om))$, $\om \in \Om$:  
\begin{equation}
\begin{cases}\Gamma(t)=\om+\int_0^t    \si _x(s, z(s) )\int_0^\cdot \phi(s,
u)du ds\,;\\ \\
%z(t)=\eta(\om ) +\int_0^t \tilde b(s, z(s), %\Gamma(s))ds +\int_0^t \si(s, z(s),
%\Gamma(s))\de B(s) +\int_0^t\int_0^s  \si(s, z(s) ) h(u, s, \om )du ds\cr   
z(t)= \eta(\om ) +\int_0^t   b(s, z(s) )ds
+\int_0^t \si(s, z(s) )\de B(s)\\ 
\qquad\qquad  +\int_0^t\int_0^s  \si(s, z(s) )   \si_x(u, z(u) ) \phi(s, u) duds\,,
\end{cases}  \label{e.1.6} 
\end{equation}
where $\si_x$ denotes the partial derivative with respect to $x$,  $\Ga(t):\Om \rightarrow \Om$ and  $z(t):\Om \rightarrow \RR$.  
This system of equations  comes from the characteristic curve equation for the  first order hyperbolic equation \eqref{e.1.5}  of  a function of infinitely many variables 
(see \cite{li,serre1,serre2}). 
For this system of characteristic  equations,   we shall show the following
statements.   
\begin{enumerate}
\item[(i)]  We use the Picard iteration approach to show that the 
above system of equations \eqref{e.1.6}  has a unique solution. Since  the fractional Brownian motion  $B$ is not differentiable,  the powerful Gr\"onwall  lemma
cannot no longer be used.  Additional effort   is needed to solve the corresponding 
\eqref{e.1.6}.  We 
use a different contraction argument, 
%. A more general contraction argument
%applicable to our system equation 
  presented in Section 4.   We call this approach {\it fractional Picard iteration}  and hope that this general  contraction principle   may also be useful 
in solving other equations involving H\"older continuous controls.  Let us point out that   \eqref{e.1.6}  has a global solution. 
\item[(ii)]  We show in Section 5 that $\Ga(t):\Om\rightarrow \Om$ defined  by 
\eqref{e.1.6}  has an inverse $\La(t)$  when $t$ is sufficiently
small (smaller than a positive random constant)  and $x(t, \om)=z(t, \La(t, \om))$
satisfies \eqref{e.1.4} (or \eqref{e.ito-equation}).   
To this end  we need to use a new It\^o    formula which is quite interesting itself.
This new It\^o    formula  is presented in Section 3.    
\item[(iii)] For general nonlinear equation \eqref{e.ito-equation}  we can only solve the equation up to a positive (random)
  time.  This is because the inverse $\La(t)$  of $\Ga(t):\Om\rightarrow 
\Om$ exists only up to  some  random 
 time (see one example given in 
Section 5).  However,
 for linear or quasilinear equation  we can solve the equation
for all  time $t\ge 0$.  In particular, in one dimensional linear case, we can 
find the explicit solution.   This is done in Section 6. 
\end{enumerate}
%
%
%
%Section 2  defines some concept and notation that will be used
%throughout this paper. Most results of this section are known.
%Section 3
%deals with a new type of It\^o    formulas which is important
%to solve the stochastic differential equation.
%Section 4 studies the solvability problem of Equation \eref{e.1.3}
%(or \eref{e.1.4}). In Section 5 and 6 we apply the general method
%to some special equations such as quasilinear and linear  equations.
%In this two cases the global solution is investigated.

For notational simplicity we only discuss one dimensional equation.  The system
of several equations can be handled in a similar way.  It is only notationally more complex. 
 On the other hand, 
our approach works for more general random anticipative  coefficients   with 
general anticipative random initial conditions. We present our 
work in this generality.  This means we shall study a slightly more general 
equation (see \eqref{e.5.6} in Section 5) instead of 
\eqref{e.1.6}.  There has been an intensive study on anticipative
   stochastic differential equations 
by using   anticipative calculus (see \cite{Bu92, Bu94, nualart}). 
We hope our work can   shed some lights  to  this topic as well. 
Some preliminary results  
are presented in Section 2 and some notations  used in this paper are 
also fixed there.

\section{Preliminary}
\subsection{Fractional integrals and derivatives}

Denote  $\left( -1\right) ^{\alpha }=e^{i\pi \alpha  }$ and   $ 
\Gamma \left( \alpha \right) =\int_{0}^{\infty }r^{\alpha -1}e^{-r}dr$. 
Let $a,b\in \mathbb{R}$ with $a<b$  and let $f\in L^{1}\left( a,b\right) $ and $%
\alpha >0.$  The left-sided and right-sided fractional Riemann-Liouville
integrals of $f$ 
%of order $\alpha $ 
are defined  
by 
\[
I_{a+}^{\alpha }f\left( t\right) =\frac{1}{\Gamma \left( \alpha \right) }%
\int_{a}^{t}\left( t-s\right) ^{\alpha -1}f\left( s\right) ds
\]%
and
\[
I_{b-}^{\alpha }f\left( t\right) =\frac{\left( -1\right) ^{-\alpha }}{\Gamma
\left( \alpha \right) }\int_{t}^{b}\left( s-t\right) ^{\alpha -1}f\left(
s\right) ds,
\]%
respectively if the above integrals exist,  where $a\le t\le b$.  
%, where  Let $I_{a+}^{\alpha }(L^{p})$ (resp. $%
%I_{b-}^{\alpha }(L^{p})$) be the image of $L^{p}(a,b)$ by the operator $%
%I_{a+}^{\alpha }$ (resp. $I_{b-}^{\alpha }$). If $f\in I_{a+}^{\alpha
%}\left( L^{p}\right) \ $ (resp. $f\in I_{b-}^{\alpha }\left( L^{p}\right) $)
%and $0<\alpha <1$ then the 
The Weyl derivatives are defined as  (if the integrals exist)
\begin{equation}
D_{a+}^{\alpha }f\left( t\right) =\frac{1}{\Gamma \left( 1-\alpha \right) }%
\left( \frac{f\left( t\right) }{\left( t-a\right) ^{\alpha }}+\alpha
\int_{a}^{t}\frac{f\left( t\right) -f\left( s\right) }{\left( t-s\right)
^{\alpha +1}}ds\right)  \label{e.weyl-der-left}
\end{equation}%
and
\begin{equation}
D_{b-}^{\alpha }f\left( t\right) =\frac{\left( -1\right) ^{\alpha }}{\Gamma
\left( 1-\alpha \right) }\left( \frac{f\left( t\right) }{\left( b-t\right)
^{\alpha }}+\alpha \int_{t}^{b}\frac{f\left( t\right) -f\left( s\right) }{%
\left( s-t\right) ^{\alpha +1}}ds\right)\,.   \label{e.weyl-der-right}
\end{equation}%
%where $a\leq t\leq b$.  
%(the convergence of the integrals at the singularity $%
%s=t$ holds point-wise for almost all $t\in \left( a,b\right) $ if $p=1$ and
%moreover in $L^{p}$-sense if $1<p<\infty $).

For any $\lambda \in (0,1)$,   denote by $C^{\lambda }(a,b)$ the space of $%
\lambda $-H\"{o}lder continuous functions on the interval $[a,b]$. We will
make use of the notations%
\[
\left\| x\right\| _{a,b,\beta }=\sup_{a\leq \theta <r\leq b}\frac{%
|x_{r}-x_{\theta }|}{|r-\theta |^{\beta }},
\]%
and%
\[
\Vert x||_{a,b}=\sup_{a\leq r\leq b}|x_{r}|,
\]%
where $x:\mathbb{R}^{d}\rightarrow \mathbb{R}$ is a given continuous
function. 
%\begin{itemize}
%\item If $\alpha <\frac{1}{p}$ and $q=\frac{p}{1-\alpha p}$ then
%\[
%I_{a+}^{\alpha }\left( L^{p}\right) =I_{b-}^{\alpha }\left( L^{p}\right)
%\subset L^{q}\left( a,b\right) .
%\]
%
%\item If $\alpha >\frac{1}{p}$ then%
%\[
%I_{a+}^{\alpha }\left( L^{p}\right) \,\cup \,I_{b-}^{\alpha }\left(
%L^{p}\right) \subset C^{\alpha -\frac{1}{p}}\left( a,b\right) .
%\]
%\end{itemize}
%
%The following inversion formulas hold:
%\begin{eqnarray}
%I_{a+}^{\alpha }\left( D_{a+}^{\alpha }f\right) &=&f,\quad \quad \;\forall
%f\in I_{a+}^{\alpha }\left( L^{p}\right)  \label{1.4} \\
%I_{a-}^{\alpha }\left( D_{a-}^{\alpha }f\right) &=&f,\quad \quad \;\forall
%f\in I_{a-}^{\alpha }\left( L^{p}\right)  \label{1.3}
%\end{eqnarray}%
%and
%\begin{equation}
%D_{a+}^{\alpha }\left( I_{a+}^{\alpha }f\right) =f,\quad D_{a-}^{\alpha
%}\left( I_{a-}^{\alpha }f\right) =f,\quad \;\forall f\in L^{1}\left(
%a,b\right) \,.  \label{1.5}
%\end{equation}%
%On the other hand, for any $f,g\in L^{1}(a,b)$ we have
%\begin{equation}
%\int_{a}^{b}I_{a+}^{\alpha }f(t)g(t)dt=(-1)^{\alpha
%}\int_{a}^{b}f(t)I_{b-}^{\alpha }g(t)dt\,,  \label{1.6}
%\end{equation}%
%and for $f\ \in I_{a+}^{\alpha }\left( L^{p}\right) $ and $g\in
%I_{a-}^{\alpha }\left( L^{p}\right) $ we have%
%\begin{equation}
%\int_{a}^{b}D_{a+}^{\alpha }f(t)g(t)dt=(-1)^{-\alpha
%}\int_{a}^{b}f(t)D_{b-}^{\alpha }g(t)dt.  \label{1.7}
%\end{equation}%
We refer to \cite{samko} for more details on 
fractional integrals and   derivatives.
%In this paper we shall use the following result. 

  Let $\pi: a=t_0<t_1<\cdots<t_{n-1}<t_n=b$ be a partition
of $[a, b]$ and denote  $|\pi|=\max_{0\le i\le n-1} (t_{i+1}-t_i)$.  
Assume  that $f\in C^{\lambda }(a,b)$ and $g\in C^{\mu }(a,b)$ with $\lambda
+\mu >1$. For these two functions, we  define the Riemann sum 
$\displaystyle S_\pi(f|g)=\sum_{i=0}^{n-1} f(t_i)(g(t_{i+1})-g(t_i))$.  
 From a  classical result of  Young \cite{young},  we know that as $|\pi|\rightarrow 0$,
the limit of $S_\pi(f|g)$  exists and is  called  the
Riemann-Stieltjes integral 
\[
\int_{a}^{b}fdg=\lim_{|\pi|\rightarrow 0}  S_\pi(f|g)\,. 
\]
 We also have  the following
proposition.  
\begin{proposition}
\label{p.integration-by-parts} Suppose that $f\in C^{\lambda }(a,b)$ and $g\in C^{\mu }(a,b)$
with $\lambda +\mu >1$. Let $1-\mu<\al<{\lambda } $. Then
the Riemann Stieltjes integral $\int_{a}^{b}fdg$ exists and it can be
expressed as%
\begin{equation}
\int_{a}^{b}fdg=(-1)^{\alpha }\int_{a}^{b}D_{a+}^{\alpha }f\left( t\right)
D_{b-}^{1-\alpha }g_{b-}\left( t\right) dt,  \label{1.8}
\end{equation}%
where $g_{b-}\left( t\right) =g\left( t\right) -g\left( b\right) $.
\end{proposition}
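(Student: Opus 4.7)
The plan is to confirm the right-hand side of \eref{1.8} is well-defined under the stated Hölder and exponent conditions, verify the identity first when $g$ is smooth by reducing it to classical calculus, and then pass to arbitrary $g\in C^\mu$ by density. Existence of $\int_a^b f\,dg$ itself follows directly from Young's theorem under $\lambda+\mu>1$.

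For well-posedness, the Hölder estimate $|f(t)-f(s)|\le\|f\|_{a,b,\lambda}|t-s|^\lambda$ combined with $\alpha<\lambda$ renders the integral in \eref{e.weyl-der-left} absolutely convergent and yields an $L^\infty$ bound on $D_{a+}^\alpha f$. Symmetrically, $g_{b-}(t)=O((b-t)^\mu)$ and $|g_{b-}(t)-g_{b-}(s)|=O(|t-s|^\mu)$ together with $1-\alpha<\mu$ produce $|D_{b-}^{1-\alpha}g_{b-}(t)|\le C(b-t)^{\mu+\alpha-1}$, which is integrable on $(a,b)$ since $\mu+\alpha>1$. These bounds secure both sides of \eref{1.8} and justify the Fubini exchanges used below.

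When $g\in C^1[a,b]$, integrating by parts the nonlocal term of \eref{e.weyl-der-right} with $u=g(t)-g(s)$ and $dv=(s-t)^{\alpha-2}\,ds$, the boundary term at $s=b$ exactly cancels the first summand $g_{b-}(t)/(b-t)^{1-\alpha}$, leaving the clean identity $D_{b-}^{1-\alpha}g_{b-}(t)=I_{b-}^\alpha g'(t)$. Substituting this into the right-hand side of \eref{1.8}, unfolding $I_{b-}^\alpha$, and applying Fubini gives
\[
(-1)^\alpha\int_a^b D_{a+}^\alpha f(t)\,I_{b-}^\alpha g'(t)\,dt=\int_a^b g'(s)\,(I_{a+}^\alpha D_{a+}^\alpha f)(s)\,ds,
\]
and the semigroup identity $I_{a+}^\alpha D_{a+}^\alpha f=f$ (valid for $f\in C^\lambda$ with $\alpha<\lambda$; see \cite{samko}) collapses this to $\int_a^b f(s)g'(s)\,ds=\int_a^b f\,dg$, completing the smooth case.

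For general $g\in C^\mu$, approximate by smooth $g_n$ with $\|g_n\|_{a,b,\mu}$ uniformly bounded and $g_n\to g$ in $C^{\mu'}$ for some $\mu'\in(1-\alpha,\mu)$ with $\lambda+\mu'>1$; Young's theorem yields convergence of the left-hand sides, while the uniform bound from the first step combined with pointwise convergence of the Weyl derivatives lets dominated convergence close the right-hand side. The main subtlety is the semigroup identity $I_{a+}^\alpha D_{a+}^\alpha f=f$ for merely Hölder $f$, which is standard in the Samko--Kilbas--Marichev framework but is the step to watch carefully; the rest of the argument is careful bookkeeping of the fractional operators and the Fubini swap.
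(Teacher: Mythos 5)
The paper does not actually prove Proposition \ref{p.integration-by-parts}: it is quoted as a known result (it is Z\"ahle's fractional integration-by-parts formula, see \cite{zahle} and \cite{samko}), so there is no in-paper argument to compare against. Your proposal reproduces the standard proof from that literature and is essentially correct: the reduction $D_{b-}^{1-\alpha}g_{b-}=I_{b-}^{\alpha}g'$ for smooth $g$ (the boundary term from the integration by parts indeed cancels the local term $g_{b-}(t)/(b-t)^{1-\alpha}$), the Fubini swap, the composition identity $I_{a+}^{\alpha}D_{a+}^{\alpha}f=f$ for $f\in C^{\lambda}$ with $\alpha<\lambda$ (Theorem 13.2 of \cite{samko}; note constants are handled since $I_{a+}^{\alpha}\bigl[(\cdot-a)^{-\alpha}/\Gamma(1-\alpha)\bigr]=1$), and the density argument via the Young--Lo\`eve estimate in $C^{\mu'}$ with $1-\alpha<\mu'<\mu$. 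One small inaccuracy: $D_{a+}^{\alpha}f$ is \emph{not} in $L^{\infty}$ in general because of the local term $f(t)/(t-a)^{\alpha}$; it is only $O\bigl((t-a)^{-\alpha}\bigr)$, which is still integrable, and since $D_{b-}^{1-\alpha}g_{b-}(t)=O\bigl((b-t)^{\mu+\alpha-1}\bigr)$ is bounded (as $\mu+\alpha>1$), the product remains absolutely integrable and the rest of your argument goes through unchanged.
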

We also note that  in   the convergent Riemann sum,  we can also use
\[
\tilde S_\pi(f|g)=\sum_{i=0}^{n-1} f(\xi_i)(g(t_{i+1})-g(t_j))\,,
\]
where $\xi_i$ is any point in $[t_i, t_{i+1}]$.  
%For the proof we refer to \cite{zahle} (see also \cite{hustochastics}).  
%In this section we introduce some basic concept and results which will be
%used throughout this  paper. Most results in this section are more or less
%known. However, we need to fix some notation and state some results
%which are specific for our purpose.

Let $\Om$, $\HH$ be two separable Banach spaces such that $\HH$ is continuously 
embedded in $\Om$.  Let $\BB$ be another separable Banach space.
A mapping $F:\Om\rightarrow \BB$ is called $\HH$-differentiable 
if there is a bounded linear mapping from $\HH$ to $\BB$ (if such mapping exists,
then it is unique and is denoted
by  $\DD F(\om)$) such that
\begin{equation}
\frac{F(\om+\vare h)-F(\om)}{\vare}=\DD F(\om)(h)+ o(\vare)\quad \hbox{as $\vare\rightarrow 0$}\,; \quad \forall \ \om\in \Om,\
 h\in \HH\,.  
\end{equation}
We  can also considered  $\DD F(\om)$ as an element in the
tensor product space $  \BB\otimes \HH'$,  where $\HH' $
is the dual of $\HH$.  
The directional derivative $\DD_h F$ for any 
direction $h\in \HH$  is defined as $\DD_hF(\om)=
\DD F(\om)(h)=\langle \DD F(\om), h\rangle_{\HH',\HH}$.   To simplify notation we also write
$\DD_hF(\om)=\DD F(\om) h$.  
If    $\HH$   and $\BB$ are Banach spaces  of real functions, and 
if there is a function $g(s, \om)$ such that  
$\DD_hF(\om)=
\DD F(\om)(h) =\int_0^T f(s, \om)h(s)ds, \ \forall h\in \HH$, 
then we denote $\DD_sF(\om)=g(s, \om)$.  

It is easy to see that we have the following chain rule.
If $F:\Om\rightarrow \BB_1$ is $\HH_1$-differentiable
and $G:\BB_1\rightarrow \BB_2$ is $\HH_2$-differentiable such that 
$\DD F(\om)$ is a bounded mapping from $\HH_1$ to $\HH_2$,  then
$G\circ F$ is also $\HH_1$-differentiable and 
\begin{equation}
\DD G(F(\om))=(\DD G)(F(\om))\circ \DD F(\om)\,.
\end{equation}
%If $f:\Om\rightarrow \RR$ is $\HH$-differentiable and
%if $\ell \in \BB$, then $f\ell $ is $\HH$-differentiable as a mapping from
%$\Om$ to $\BB$.  Moreover, we have
%\begin{equation}
%\DD f(\om)\ell =   \ell\otimes \DD f(\om) \,,
%\end{equation}
%where $\DD f(\om)$ is considered as an element in $\HH'$.    

%We follow the notation introduced in previous section.
%Let $\cP$  be the set of all polynomials of fractional Brownian
%motion (of Hurst parameter $H$) over interval $[0, T]$.  More specifically,
%$\cP_{H}$ contains all elements of the form
%\[
%F(\om)=f\left(B(t_1) \,, \cdots \,, B(t_n)\right)
%\]
%where $f$ is a polynomial of $n$ variables and $0\le t_1<\cdots<t_n=T$.
%The Malliavin derivative  $\DD_s$ of $F$ is defined as
%\begin{equation}\label{e.2.1}
%\DD_s F=\sum_{j=1}^n \frac{\partial f}{\partial x_j}
%\left(B(t_1) \,, \cdots \,, B(t_n) \right)
%I_{[0, t_j]}(s) \,,\quad 0\le s\le T\,.
%\end{equation}
%Higher order derivatives $\DD_{s_1, \cdots, s_k}^{  k} $
%can be defined in a similar way.
%
%The Meyer-Watanabe space $\DD_{k, p}^H$ is introduced  as follows.
%For any $F\in \cP$,  the following notation will be used:
%\begin{equation}\label{e.2.2}
%\||F\||_{k, p}:=\|F\|_p+\sum_{\kappa =1}^k \left[\EE \left(\int_{[0,  T]^\kappa } |
%\DD_{t_1, \cdots, t_\kappa }   F|^2dt_1\cdots dt_\kappa \right)^{p/2}\right]^{1/p}
%\,.
%\end{equation}
%It is easy to see that this is a norm and the completion of
%$\cP $ with respect to this norm is denoted by
%$\DD_{k, p} $.

For any function $f(t)=f(t, \om)$, where $(t, \om)\in [0, T]\times \Om $,
which is H\"older continuous with respect to $t$ of exponent $\mu>1-H$,
  by Proposition \ref{p.integration-by-parts},
we can define  the pathwise   integral $ \int_0^t f(s)\de B(s)$
as the (pathwise) limit as $|\pi|=\max_{0\le i\le n-1}
(t_{i+1}-t_i)\rightarrow 0$  of the following Riemann sum
\begin{equation}
S_\pi(f) =\sum_{i=0}^{n-1} f(s_k) (B(s_{k+1})-B(s_k))\,, 
\label{e.riemann-sum-left} 
\end{equation}
where $\pi: 0=s_0<s_1<\cdots<t_{n_{n-1}}<t_n=t$ is a partition of $[0, t]$. 
This integral can also be given  by 
\[
 \int_0^t f(s)\de B(s)=\int_0^t D^{1-\al} _{0+} f(s) D_{t-}^{ \al} B_{t-}(s) ds\,,
\]
where $\al$ satisfies $ 1-\mu<\al<H$.  

If  $f$ is H\"older continuous of exponent greater than $1-H$ and if $g$ is continuous,  then 
  $\eta(t)=\eta(0)+\int_0^t f(s) \de B(s)+\int_0^t g(s) ds$ is well-defined.  For any   continuous  function $F$ on $[0, T]\times \RR$, which is continuously differentiable 
  in $t$ and twice continuously in $x$ 
 we  have the  following It\^o    formula:  
\begin{eqnarray}
F(t, \eta(t))
&=& F(0, \eta(0))
 +\int_0^t \left[ \frac{\partial }{\partial s} F(s, \eta(s)) +\frac{\partial }{\partial x}F (s, \eta(s)) 
g(s) \right] ds\nonumber\\
&&\qquad +
\int_0^t \frac{\partial }{\partial x}F (s, \eta(s))  f(s) \de B(s) \,. 
\label{e.pathwise-ito}
\end{eqnarray}
[see for example  \cite{hustochastics} and references therein.] \  
We also notice that if $f:[0, T]\times \Om\rightarrow \RR$ is H\"older continuous of exponent $\mu>1-H$, then in the Riemann sum \eqref{e.riemann-sum-left}  the left point  $s_k$  can be replaced by  any points $\xi_k$ in the subinterval.  Namely,
\begin{equation}
\int_0^T f(s) \de B_s=\lim_{|\pi|\rightarrow 0} 
\sum_{i=0}^{n-1} f(\xi_k) (B(s_{k+1})-B(s_k))\,,
\label{e.def-pathwise-any-point} 
\end{equation}
where $\xi_k $ is  any point in  $[s_k, s_{k+1}]$.

In the stochastic analysis of fractional Brownian motions of Hurst parameter 
$H>1/2$, usually, we take the above Banach space 
 $\HH$ to be the reproducing kernel  Hilbert space
$\HH_\phi$:
\begin{equation}
\HH_{\phi}=\left\{ f:[0, T]\rightarrow\RR\,, \|f\|_{\HH_{\phi}}^2=
\int_0^T\int_0^T f(u) f(v)\phi(u-v) dudv<\infty\right\}\,,
\label{e.def-Hphi}
\end{equation}
which is the completion of the space of smooth functions on $[0, T]$ with respect to the norm $\|\cdot\|_{\HH_\phi}$,  
where 
\[
\phi(u):=H(2H-1) |u|^{2H-2}\,. 
\] 
The element in $\HH_\phi$ may be    generalized  function (distribution)
although we still write $f:[0, T]\rightarrow \RR$ in \eqref{e.def-Hphi}. 
We can define $\DD_s F(\om)$ as usual  and we 
denote
\begin{equation}\label{e.def-Dphi}
\DD^\phi_t F(\om) =\int_0^T  \phi(t,s)  \DD_s F(\om) ds\,. 
\end{equation} 
%\begin{equation}\label{e.def-Dphi}
%\DD^\phi_t F(\om) =\int_0^T  \phi(t,s)  \DD_s F(\om) ds\,. 
%\end{equation}
The expectation $\EE \int_a^bf(s) \de B(s)$ may generally not be zero.
In   \cite{duncan} (see also \cite{BHOZ, humams}) we introduce an 
  It\^o     stochastic integral by using the Wick product.  
  We also established a relationship between pathwise and It\^o   integrals. Here,  we can use this relationship to define It\^o      integral as 
\begin{equation}\label{e.ito-pathwise-relation}
\int_0^T f(t)dB(t)=\int_0^T f(t)\de B(t)-\int_0^T \DD^\phi _t f(t)dt 
\end{equation}
if $f$ is H\"older continuous of exponent $\mu>1-H$ and 
$ \DD^\phi _s f(s)$ exists and is integrable.  It is easy to see that
$\EE \left( \int_0^T f(t)dB(t)\right)=0$.  

The It\^o    formula and many other results for It\^o     stochastic 
integral have been established.  Here, we explain that the It\^o    formula 
for It\^o     integral can also be obtained   from 
\eqref{e.pathwise-ito} and \eqref{e.ito-pathwise-relation}.

\begin{proposition}\label{p.2.2} 
Let 
\[
\eta(t)=\eta+\int_0^t f(s) dB_s+\int_0^t g(s) ds\,,
\]
where $f$ is H\"older continuous with exponent greater than $1-H$
and $g$ is continuous. 
%measurable satisfying $\int_0^T |g(s)|ds<\infty$.   
Assume that
$\DD^\phi_s f(s)$ exists and is a continuous function of $s$. Let $F:[0, T]\times \RR
\rightarrow \RR$ be   continuously differentiable in 
$t$ and twice continuously  differentiable in $x$.  Then 
\begin{eqnarray}
F(t, \eta(t))
&=&F(0, \eta_0)  
+\int_0^t  \frac{\partial F}{\partial x} (s, \eta(s))f(s)d B(s)
\label{e.ito-ito-formula} \\
&&  +\int_0^t   \left[  \frac{\partial F}{\partial s} (s, \eta(s))
+ \frac{\partial  F}{\partial x } (s, \eta(s)) 
 g(s)   + \frac{\partial^2 F}{\partial x^2} (s, \eta(s)) f(s)  \DD_s ^\phi \eta(s) \right] ds\,.  
 \nonumber 
  \end{eqnarray}
\end{proposition}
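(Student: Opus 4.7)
The plan is to derive the Itô--Itô formula \eqref{e.ito-ito-formula} from two ingredients that are already available in the excerpt: the pathwise Itô formula \eqref{e.pathwise-ito} and the Itô--pathwise relation \eqref{e.ito-pathwise-relation}. The strategy is a ``sandwich'': first pass from the Itô form of $\eta(t)$ to its pathwise form, then apply \eqref{e.pathwise-ito}, and finally convert the resulting pathwise stochastic integral back to an Itô one. A cancellation of Malliavin derivative terms should produce the stated formula.

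Concretely, I would first apply \eqref{e.ito-pathwise-relation} to rewrite
\[
\eta(t)=\eta+\int_0^t f(s)\de B(s)+\int_0^t \bigl[g(s)-\DD^\phi_s f(s)\bigr]ds,
\]
so that $\eta$ is written as a pathwise Young integral plus an ordinary Lebesgue integral. The hypothesis that $f$ is H\"older continuous of exponent greater than $1-H$ and that $\DD^\phi_s f(s)$ is continuous guarantees that $\eta(s)$ is itself H\"older continuous of exponent greater than $1-H$, so that \eqref{e.pathwise-ito} is applicable with drift $\tilde g(s):=g(s)-\DD^\phi_s f(s)$. This yields
\[
F(t,\eta(t))=F(0,\eta(0))+\int_0^t\!\Bigl[\tfrac{\partial F}{\partial s}+\tfrac{\partial F}{\partial x}\bigl(g(s)-\DD^\phi_s f(s)\bigr)\Bigr]ds+\int_0^t \tfrac{\partial F}{\partial x}(s,\eta(s))f(s)\de B(s).
\]

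Next, I would apply \eqref{e.ito-pathwise-relation} in the reverse direction to the remaining pathwise integral, obtaining
\[
\int_0^t \tfrac{\partial F}{\partial x}(s,\eta(s))f(s)\de B(s)=\int_0^t \tfrac{\partial F}{\partial x}(s,\eta(s))f(s)dB(s)+\int_0^t \DD^\phi_s\!\Bigl[\tfrac{\partial F}{\partial x}(s,\eta(s))f(s)\Bigr]ds.
\]
The chain rule for the Malliavin derivative together with the Leibniz rule gives
\[
\DD^\phi_s\!\Bigl[\tfrac{\partial F}{\partial x}(s,\eta(s))f(s)\Bigr]=\tfrac{\partial^2 F}{\partial x^2}(s,\eta(s))f(s)\,\DD^\phi_s\eta(s)+\tfrac{\partial F}{\partial x}(s,\eta(s))\,\DD^\phi_s f(s).
\]
Substituting this back, the term $\tfrac{\partial F}{\partial x}\DD^\phi_s f(s)$ exactly cancels the one produced in the first step, leaving precisely \eqref{e.ito-ito-formula}.

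The main obstacle is not the algebra, which is a clean cancellation, but the regularity bookkeeping needed to apply each tool at the right step. In particular, one must verify that $s\mapsto \tfrac{\partial F}{\partial x}(s,\eta(s))f(s)$ is H\"older continuous of exponent larger than $1-H$ (which follows from the H\"older regularity of $\eta$ and the $C^{1,2}$ assumption on $F$), and that its $\DD^\phi$ derivative exists and is integrable (which is where the assumed continuity of $\DD^\phi_s f(s)$ is used, together with the chain rule $\DD_r \eta(s)=\DD_r\eta+f(r)\mathbf 1_{[0,s]}(r)+\int_0^s \DD_r f(u)\,dB(u)+\int_0^s \DD_r g(u)\,du$). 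Once these Malliavin--Sobolev regularity points are verified, both invocations of \eqref{e.ito-pathwise-relation} and the chain rule are legitimate, and the proof is complete.
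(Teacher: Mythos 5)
Your proposal is correct and follows essentially the same route as the paper's own proof: rewrite $\eta$ in pathwise form via \eqref{e.ito-pathwise-relation}, apply the pathwise It\^o formula \eqref{e.pathwise-ito}, convert the resulting pathwise integral back to an It\^o one, and observe the cancellation of the $\frac{\partial F}{\partial x}\,\DD^\phi_s f(s)$ terms after the Leibniz/chain-rule expansion of $\DD^\phi_s\bigl[\frac{\partial F}{\partial x}(s,\eta(s))f(s)\bigr]$. Your version merely spells out the chain-rule step and the regularity bookkeeping that the paper leaves implicit in its sketch.
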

\begin{proof} We briefly sketch the proof. 
First,  by \eref{e.ito-pathwise-relation}  we see 
\[
\eta(t)=\eta_0+\int_0^t  f(s) \de B_s+\int_0^t \left[g(s)-\DD^\phi_s f(s)  \right]  ds\,.
\] 
From
\eref{e.pathwise-ito}  it follows that
\begin{eqnarray*}
F(t, \eta(t))
&=&F(0, \eta_0)+
\int_0^t \frac{\partial F}{\partial s} (s, \eta(s))ds
+\int_0^t  \frac{\partial F}{\partial x} (s, \eta(s))f(s)\de  B(s)
\nonumber\\
&&\qquad +\int_0^t   \frac{\partial F}{\partial x} (s, \eta(s)) 
\left[ g(s)- \DD^\phi_s f(s)\right] ds\\
&=&F(0, \eta_0)+
\int_0^t \frac{\partial F}{\partial s} (s, \eta(s))ds
+\int_0^t  \frac{\partial F}{\partial x} (s, \eta(s))f(s)d B(s)
\nonumber\\
&&\qquad +\int_0^t   \left\{  \frac{\partial F}{\partial x} (s, \eta(s)) 
  \left[g(s)- \DD^\phi_s f(s)\right]
  +\DD^\phi_s \left(\frac{\partial F}{\partial x} (s, \eta(s))f(s)
  \right)\right\} ds\,.  
 \end{eqnarray*}
 This is simplified to \eqref{e.ito-ito-formula}.
 \end{proof}
%
%\begin{theorem}
%Let  the following conditions
%be satisfied
%
%(i) There is a positive number $\ga \in (0, 1)$ such that
%\begin{equation}\label{e.2.13}
%\int_0^T\int_0^T  | \bgamma_{T} ^*(t) \BB_{T}(s) D^H_t   (f)(s) |^2dsdt
%\qquad \hbox{is bounded by} \ \ \ga \,.
%\end{equation}
%
%(ii) There is a positive number $q>1$ with
%\begin{equation}\label{e.2.14}
%\EE\left[\exp\left\{ \frac{q}{2} \| \BB_{T}^* \BB_{T}  f
%\|_{\Theta_H} ^2 \right\}\right]<\infty\,.
%\end{equation}
%
%Then $\La$ is invertible and $\PP\circ \La^{-1}$
%is absolutely continuous with respect to $\PP$. Moreover, the
%following identity is true:
%\begin{equation}\label{e.2.14}
%\frac{d\PP\circ \La^{-1}}
%{d\PP}=\cL=\tilde \kappa \exp\left\{ -\int_0^T  \BB_{T}^* \BB_{T}  f(s)
%dB(s)-\frac{1}{2} \| \BB_{T}^* \BB_{T}  f  \|_{\Theta_H}^2 \right\}\,,
%\end{equation}
%where
%\begin{eqnarray}
%\tilde \kappa
%&=&
%\exp\Bigg\{ -\int_0^T \int_0^s \bgamma_{T}^* (s)   [D^H_s \BB_{T}  (f)](r)
%\bgamma_{T}^*(r)     \left[ \DD_r  \BB_{T}  (f) (s, \La(s)))\right]
%\circ \La(s)^{-1} dr ds\Bigg\}\,,
%\nonumber\\
%&& \label{e.2.16}
%\end{eqnarray}
%where $\La _s$ is defined by
%\[
%\La(s) :\ \ B_\cdot^H\rightarrow B_\cdot^H+\int_0^{ \cdot}h_s(u)du\,
%\]
%and
%\[
%h_s(u):=\bgamma_{H, T}  \left( I_{[0, s]}\BB_{T} f \right) (u) \,, \quad
%0\le u\le T
%\,,
%\]
%and $\La(s)^{-1} $ is the inverse of $\La(s)$.
%\end{theorem}

\section{It\^o     formulas}

Denote $\cT=[0, T]$.
If $X(t)=\eta+\int_0^t f(s)ds+\int_0^t g(s) \de  B(s)$ and if $F$ is a function
from $\cT\times \RR$ to $\RR$,  then an  It\^o     formula for $F(t, X(t))$ is
given by \eqref{e.pathwise-ito}, or \eqref{e.ito-ito-formula} 
if the integral is It\^o     type 
(see  also \cite{young, zahle, hustochastics} 
and in particular the  references therein).  
However, to show the existence and uniqueness of the solution 
to   It\^o    
  stochastic differential equation    we need  an It\^o     formula
of  the following form:
If $X$ is as above, $\Ga: \cT\times \Om\rightarrow \Om$ 
and $F:  \cT\times \RR\times \Om$ to $\RR$,  we want to find an It\^o     formula 
for $F(t, X(t), \Ga(t))$.  Here and in what follows, we omit the explicit dependence on $\om$
when it is clear.

\begin{lemma}\label{t.3.1}  
Let $h(t, u, \om)$, $(t,u,\om)\in \cT^2\times  \Om$ be a continuous function of $t$ and $u$.  
%$h:\cT^2\times\Om\rightarrow \RR$ be measurable and such that for any 
%$t\in \cT$, $\int_0^T |h(t, u, \om)|du<\infty$. 
Define  a family  
of  nonlinear  transforms  from $\Om$ to $\Om$ by 
\begin{equation}
\Ga(t, \om)=\om+ \int_0^\cdot   h(t, u, \om) du\,,\quad 
t\in \cT\,. 
\end{equation}
Let  $f:\cT\times \Om\rightarrow \RR$  be  measurable such  that 
for any $\om\in \Om$,   $f:\cT  \rightarrow \RR$ is H\"older continuous 
of order greater than $1-H$ so that  $F=\int_0^T f(s) \de B(s)$ is well-defined.
We have 
\begin{eqnarray}
F\circ \Ga(t, \om)
&=&\int_0^T f(s, \Ga(t, \om) )\de B(s) +\int_0^T   f(s, \Ga(t, \om) )  h(t,s, \om)  ds\,.
\nonumber\\ 
\label{e.3.2} 
\end{eqnarray}
\end{lemma}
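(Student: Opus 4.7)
The plan is to prove \eqref{e.3.2} via the Riemann sum definition of the pathwise integral, by exploiting that evaluating the coordinate map $B(s)$ at the shifted path $\Ga(t,\om)$ simply adds an absolutely continuous drift to the increments of $\om$.

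First I will check that both integrals on the right-hand side are well-defined. Since $\Ga(t,\om)$ is a continuous path and hence lies in $\Om$, the hypothesis applied at the path $\om':=\Ga(t,\om)$ yields that $s\mapsto f(s,\Ga(t,\om))$ is H\"older continuous of order greater than $1-H$ in $s$. Hence $\int_0^T f(s,\Ga(t,\om))\de B(s)$ exists as a Young integral by Proposition \ref{p.integration-by-parts}, and $\int_0^T f(s,\Ga(t,\om))h(t,s,\om)\,ds$ exists as a Riemann integral by continuity of $h(t,\cdot,\om)$.

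Next I will unpack $F(\Ga(t,\om))$. Because $B(s)(\om')=\om'(s)$ by the definition of the coordinate map,
\[
B(s)(\Ga(t,\om))=\Ga(t,\om)(s)=\om(s)+\int_0^s h(t,u,\om)\,du,
\]
so for any partition $\pi: 0=s_0<s_1<\cdots<s_n=T$,
\[
B(s_{k+1})(\Ga(t,\om))-B(s_k)(\Ga(t,\om))=\bigl[\om(s_{k+1})-\om(s_k)\bigr]+\int_{s_k}^{s_{k+1}}h(t,u,\om)\,du.
\]
Applying the Riemann-sum definition \eqref{e.riemann-sum-left} to $F$ evaluated at $\Ga(t,\om)$ gives
\[
F(\Ga(t,\om))=\lim_{|\pi|\to 0}\sum_{k=0}^{n-1}f(s_k,\Ga(t,\om))\Bigl[(\om(s_{k+1})-\om(s_k))+\int_{s_k}^{s_{k+1}}h(t,u,\om)\,du\Bigr].
\]

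Finally, I split into two partial sums. The first, $\sum_k f(s_k,\Ga(t,\om))(\om(s_{k+1})-\om(s_k))$, converges to $\int_0^T f(s,\Ga(t,\om))\de B(s)$ by Young's theorem: the integrand $s\mapsto f(s,\Ga(t,\om))$ is H\"older of order $\mu>1-H$ and $\om$ is H\"older of exponent $\be\in(1/2,H)$, so $\mu+\be>1$. The second is a standard Riemann sum for $\int_0^T f(u,\Ga(t,\om))h(t,u,\om)\,du$ and converges by uniform continuity of the integrand on $\cT$. Adding the two limits yields the right-hand side of \eqref{e.3.2}. The only conceptually delicate point is recognizing that the first limit is exactly the Young integral of the new H\"older integrand $f(\cdot,\Ga(t,\om))$ against the original path $\om$, rather than against some shifted path; once this identification is in place the proof is a routine passage to the limit in the Riemann sum.
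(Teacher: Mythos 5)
Your proof is correct and follows essentially the same route as the paper's: both rest on the single observation that the increments of the shifted path satisfy $\Ga(t,\om)(s_{k+1})-\Ga(t,\om)(s_k)=\om(s_{k+1})-\om(s_k)+\int_{s_k}^{s_{k+1}}h(t,u,\om)\,du$, so the approximating sum splits into the Young integral against $\om$ and an ordinary Riemann sum for $\int_0^T f(s,\Ga(t,\om))h(t,s,\om)\,ds$. The only difference is presentational: the paper first reduces to step functions $f(t,\om)=\sum_k a_k(\om)I_{[t_k,t_{k+1})}(t)$ and appeals to a limiting argument, whereas you work directly with the Riemann-sum definition \eqref{e.riemann-sum-left} for H\"older continuous $f$, which in effect makes that limiting argument explicit.
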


\begin{proof}  By a limiting argument we may assume that
$f$ is of the form
\[
f(t, \om)=\sum_{k=0}^{n-1} a_k (\om) I_{[t_k, t_{k+1})}(t)\,,
\]
where $0=t_0<t_1<\cdots<t_{n-1}<t_n=T$ is a partition of the
interval $[0, T]$.  Thus
\begin{eqnarray*}
F(\om)
&=& \sum_{k=0}^{n-1} a_k(\om)\left[B({t_{k+1}}, \om)-B({t_k}, \om)\right] \\
&=& \sum_{k=0}^{n-1} a_k (\om) \left[\om(t_{k+1})-\om(t_k) \right]\,.
\end{eqnarray*}
Thus
\begin{eqnarray*}
F(\Ga(t,\om))
&=& \sum_{k=0}^{n-1} a_k(\Ga(t,\om) ) \Bigg\{ \om({t_{k+1}}) +
\int_0^{t_{k+1}} h(t, s, \om)ds  -\om({t_k})
 -\int_0^{t_{k}} h(t, s, \om)ds  \Bigg\}  \\
&=& \sum_{k=0}^{n-1} \left\{a_k(\Ga(t,\om) ) ( \om({t_{k+1}})-\om({t_k})) + a_k(\Ga(t,\om) ) 
\int_{t_k} ^{t_{k+1}} h(t, s, \om)ds   \right\}
\\
&=&\int_0^T f(s, \Ga(t, \om) )\de B(s)+\int_0^T f(s, \Ga(t, \om) )h(t, s, \om)ds    \,,
\end{eqnarray*}
which is \eqref{e.3.2}.  
\end{proof}  
 
Let $\HH\subseteq \Om$ be a Banach space continuously embedded 
in $\Om$. Now we state our new It\^o     formula. 
\begin{theorem}\label{t.3.2}  Let   measurable functions $\eta: \RR\times \Om\rightarrow \RR $,
$f_0, f_1: \cT\times \RR\times \Om\rightarrow \RR $, $g_0, g_1:\cT\times \Om\rightarrow \RR$ satisfy
\begin{equation}
\begin{cases}
\hbox{ $f_0(s, x, \om)
$ and $g_0(s,   \om)$ are continuous in $s\in \cT$} \,;\nonumber\\
  \hbox{$f_1(s,x,\om)$ and $g_1(s,\om)$  are  H\"older continuous}\nonumber\\
\qquad  \qquad\quad \hbox{ with respect to $s$
of order greater than $1-H$}\,;\nonumber\\
  \hbox{$f_1(s,x,\om)$ is Lipschitz in $x$}\,. 
\end{cases}
\end{equation}
%& \int_0^T |f_0(s, x, \om)|ds<\infty\,, \quad \int_0^T |g_0(s,   \om)|ds<\infty\,;\nonumber\\
%& \hbox{$f_1(s,x,\om)$ and $g_1(s,\om)$  are  H\"older continuous}\nonumber\\
%& \qquad\quad \hbox{ with respect to $s$
%of order greater than $1-H$}\,;\nonumber\\
%& \hbox{$f_1(s,x,\om)$ is Lipschitz in $x$}\,. 
%\end{ecases}
%Assume that $\Ga(t, \om)$ is differentiable  in  $t$   with respect to the %$\HH$ norm,  namely, 
%$\frac{d}{dt} \Ga(t, \om)\in \HH$.  
Define
\begin{eqnarray}
F(t,x,\om)
&=& \eta(x, \om)+\int_0^t f_0(s, x, \om) ds+\int_0^t f_1(s, x, \om) \de B(s)
\end{eqnarray} 
and 
\begin{eqnarray}
G(t, \om) 
&=&\xi(\om)+\int_0^tg_0(s,   \om) ds+\int_0^t g_1(s,  \om) \de B(s)\,. 
\end{eqnarray}
Assume that $F$ and $\frac{\partial }{\partial x} F(t, x, \om)$ 
are  H\"older continuous in $t$  of exponent greater than $1-H$ and Lipschitz in    $x$, $\HH$-differentiable in $\om$. Let $\xi:\Om\rightarrow \RR$ be measurable.   Let $h$ and $\Ga$ be defined as in Lemma \ref{t.3.1}  and assume 
that $\Ga:[0, T]\times \Om\rightarrow \Om$ is continuously differentiable in $s$ 
with respect to the topology  of $\HH$ (namely, $\frac{d}{ds} \Ga(s)\in \HH$).  
Then 
\begin{eqnarray}
&&F(t, G(t), \Ga(t))
= \eta(\xi(\om), \om) +\int_0^t f_0(s, G(s), \Ga(s)) ds \nonumber \\
&&\quad \qquad+\int_0^t 
f_1(s, G(s), \Ga(s)) \de B(s) +\int_0^t f_1(s, G(s), \Ga(s)) h(s, s, \om) ds \nonumber \\
&&\quad \qquad +\int_0^t \frac{\partial }{\partial x}F(s, G(s), \Ga(s)) g_0(s) ds
+\int_0^t \frac{\partial }{\partial x}F(s,  G(s), \Ga(s)) g_1(s) \de B(s)
 \nonumber \\
&&\quad \qquad +\int_0^t   (\DD F)(s, G(s),  \Ga(s)) \frac{d}{ds}   \Ga(s) \,  ds  \,, \label{e.ito-formula} 
\end{eqnarray}
where and in what follows we denote 
\[
(\DD F)(s, G(s),  \Ga(s)) \frac{d}{ds}   \Ga(s)
:=\DD F(s, x, \om) \bigg|_{x=G(s)\!,\, \om =\Ga(s)}  \frac{d}{ds}   \Ga(s)\,.
\]
\end{theorem}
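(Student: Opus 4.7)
The plan is to derive the formula by a Riemann sum / telescoping argument. Pick a partition $\pi: 0 = t_0 < t_1 < \cdots < t_n = t$ and decompose each increment $F(t_{k+1}, G(t_{k+1}), \Ga(t_{k+1})) - F(t_k, G(t_k), \Ga(t_k))$ as the sum of three slot-variations:
\begin{align*}
\De_k^{(1)} &= F(t_{k+1}, G(t_{k+1}), \Ga(t_{k+1})) - F(t_k, G(t_{k+1}), \Ga(t_{k+1})),\\
\De_k^{(2)} &= F(t_k, G(t_{k+1}), \Ga(t_{k+1})) - F(t_k, G(t_k), \Ga(t_{k+1})),\\
\De_k^{(3)} &= F(t_k, G(t_k), \Ga(t_{k+1})) - F(t_k, G(t_k), \Ga(t_k)).
\end{align*}

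The term $\De_k^{(1)}$ is computed directly from the definition of $F$: it equals $\int_{t_k}^{t_{k+1}} f_0(s, G(t_{k+1}), \Ga(t_{k+1}))\,ds$ plus the pathwise integral $\int_{t_k}^{t_{k+1}} f_1(s, G(t_{k+1}), \Ga(t_{k+1}))\,\de B(s)$ taken on the shifted path $\Ga(t_{k+1}, \om)$; applying Lemma \ref{t.3.1} with parameter $t_{k+1}$ re-expresses the latter as the same pathwise integral against the original $B$ plus the correction $\int_{t_k}^{t_{k+1}} f_1(s, G(t_{k+1}), \Ga(t_{k+1})) h(t_{k+1}, s, \om)\,ds$. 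Summing over $k$ and letting $|\pi|\to 0$ produces, in the limit, the first three right-hand-side integrals of \eqref{e.ito-formula} (the $f_0$ integral, the pathwise $\de B$ integral, and the $h(s,s,\om)$ correction). For $\De_k^{(2)}$ a first order Taylor expansion in $x$ combined with $G(t_{k+1}) - G(t_k) = \int_{t_k}^{t_{k+1}} g_0(s)\,ds + \int_{t_k}^{t_{k+1}} g_1(s)\,\de B(s)$ and the evaluation-point flexibility recorded in \eqref{e.def-pathwise-any-point} yields in the limit $\int_0^t \frac{\partial F}{\partial x}(s, G(s), \Ga(s)) g_0(s)\,ds + \int_0^t \frac{\partial F}{\partial x}(s, G(s), \Ga(s)) g_1(s)\,\de B(s)$; no It\^o-type quadratic correction arises because all integrands are H\"older continuous with exponents summing to more than one, so Proposition \ref{p.integration-by-parts} applies. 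Finally, the $\HH$-differentiability of $F$ in $\om$ together with $\Ga(t_{k+1}) - \Ga(t_k) = \int_{t_k}^{t_{k+1}} \frac{d}{ds}\Ga(s)\,ds \in \HH$ gives $\De_k^{(3)} = (\DD F)(t_k, G(t_k), \Ga(t_k))\int_{t_k}^{t_{k+1}} \frac{d}{ds}\Ga(s)\,ds + o(t_{k+1}-t_k)$, whose Riemann sum converges to $\int_0^t (\DD F)(s, G(s), \Ga(s)) \frac{d}{ds}\Ga(s)\,ds$.

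The main technical obstacle is to justify that the ``off-diagonal'' Riemann sums in $\De_k^{(1)}$ actually converge to the claimed pathwise and Lebesgue integrals, since for $s \in [t_k, t_{k+1}]$ the integrand is evaluated at $(G(t_{k+1}), \Ga(t_{k+1}))$ rather than at $(G(s), \Ga(s))$. I would handle this by writing $f_1(s, G(t_{k+1}), \Ga(t_{k+1})) = f_1(s, G(s), \Ga(s)) + R_k(s)$ and observing that the Lipschitz property of $f_1$ in $x$ and the $\be$-H\"older continuity of $G$ and $\Ga$ make $R_k$ itself $\be$-H\"older on $[t_k, t_{k+1}]$ with $|R_k(s)| \le C\,|t_{k+1}-s|^\be$; the fractional-derivative representation \eqref{1.8} then gives $\big|\int_{t_k}^{t_{k+1}} R_k(s)\,\de B(s)\big| \le C (t_{k+1}-t_k)^{2\be}$, so summing yields an error of order $|\pi|^{2\be-1}\to 0$ since $\be > 1/2$. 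The analogous off-diagonal substitutions in $\De_k^{(2)}$ and $\De_k^{(3)}$ are controlled identically, after which matching the limit with the right-hand side of \eqref{e.ito-formula} is routine bookkeeping.
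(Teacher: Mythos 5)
Your proposal is correct and follows essentially the same route as the paper: the identical three-way splitting of each increment (time slot, $x$ slot, $\om$ slot), Lemma \ref{t.3.1} to convert the integral on the shifted path into the pathwise integral plus the $h$-correction, the mean value theorem for the middle term, and $\HH$-differentiability for the $\Ga$ term. The extra detail you supply on controlling the off-diagonal Riemann-sum errors (the $|\pi|^{2\be-1}\to 0$ estimate using $\be>1/2$) is exactly the justification the paper leaves implicit behind ``it is easy to see.''
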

\begin{proof}
Let $\pi:0=t_0<t_1<\cdots<t_n=t$ be a  partition of $[0, t]$ and denote $|\pi|=\max_{0\le k\le n-1}(t_{k+1}-t_k)$.   We have
\begin{eqnarray*}
&&F(t, G(t), \Ga(t))-F(0, G(0), \Ga(0))\\
& &\qquad =   \sum_{k=0}^{n-1} \left[ F(t_{k+1}, G(t_{k+1}), 
\Ga(t_{k+1}))- F(t_{k }, G(t_{k }), \Ga(t_{k }))\right]\\
& &\qquad = I_1+I_2+I_3\,,
\end{eqnarray*}
where
\begin{eqnarray*} 
I_1&=& \sum_{k=0}^{n-1} \left[ F(t_{k+1}, G(t_{k+1}), 
\Ga(t_{k+1}))-F(t_{k }, G(t_{k+1}), 
\Ga(t_{k+1}))\right] \,;\\ 
I_2&=& \sum_{k=0}^{n-1} \left[ F(t_{k}, G(t_{k+1}), 
\Ga(t_{k+1}))-F(t_{k }, G(t_{k }), 
\Ga(t_{k+1}))\right]\,; \\
I_3&=& \sum_{k=0}^{n-1} \left[ F(t_{k }, G(t_{k }), 
\Ga(t_{k+1}))-F(t_{k }, G(t_{k }), 
\Ga(t_{k }))\right]\,.
\end{eqnarray*} 
Let us first look at $I_1$.    Using Lemma \ref{t.3.1},   we have 
\begin{eqnarray*}
I_1
&=& \sum_{k=0}^{n-1} \int_{t_k}^{t_{k+1}} f_0(s, G(t_{k+1}), \Ga(t_{k+1})) ds\\
&&\qquad 
+\sum_{k=0}^{n-1} \int_{t_k}^{t_{k+1}} f_1(s,  x, \om)\de B(s)\Big|_{
x=G(t_{k+1}), \om=\Ga(t_{k+1}) }\\
&=&  \sum_{k=0}^{n-1} \int_{t_k}^{t_{k+1}} f_0(s, G(t_{k+1}), \Ga(t_{k+1})) ds
+\sum_{k=0}^{n-1} \int_{t_k}^{t_{k+1}} f_1(s,  G(t_{k+1}), \Ga(t_{k+1})) \de B(s) \\
&&\qquad + \sum_{k=0}^{n-1} \int_{t_k}^{t_{k+1}} f_1(s,  G(t_{k+1}), \Ga(t_{k+1})) h(t_{k+1}, s) ds\,. 
\end{eqnarray*}
From here it is easy to see that  
\begin{eqnarray}
\lim_{|\pi|\rightarrow 0} I_1
&=&   \int_0^t f_0(s, G(s), \Ga(s)) ds +\int_0^t 
f_1(s, G(s), \Ga(s)) \de B(s)\nonumber \\
&&\qquad +\int_0^t f_1(s, G(s), \Ga(s)) h(s, s, \om) ds \,. 
\label{proof-t.3.2-i1}
\end{eqnarray} 
Using the mean value theorem we have, denoting $G_{k, \theta}= 
G(t_k)+\theta \left[G(t_{k+1})
-G(t_k)\right]$, 
\begin{eqnarray*}
I_2
&=&  \sum_{k=0}^{n-1} \int_0^1 \frac{\partial }{\partial x}F(t_{k}, G_{k, \theta}  , 
\Ga(t_{k+1})) d\theta \left[ G(t_{k+1})-G(t_k)\right]\\
&=& \int_0^1 d\theta \left\{ \sum_{k=0}^{n-1} \int_{t_k}^{t_{k+1}} \frac{\partial }{\partial x}F(t_{k}, G_{k, \theta}  , 
\Ga(t_{k+1}))  \left[  g_0(s) ds   +g_1(s)\de B(s)\right] \right\}\,. 
\end{eqnarray*}
Since for any $\theta \in [0, 1]$,    $G_{k, \theta}$  is any point between $G(t_k)$ and $G(t_{k+1})$,
we see that for any $\theta\in [0, 1]$,  
%In a similar way we can prove 
\begin{eqnarray}
&&\lim_{|\pi|\rightarrow 0}  \sum_{k=0}^{n-1} \int_{t_k}^{t_{k+1}} \frac{\partial }{\partial x}F(t_{k}, G_{k, \theta}  , 
\Ga(t_{k+1}))  \left[  g_0(s) ds   +g_1(s)\de B(s)\right]  \nonumber\\
&&\qquad  
=\int_0^t \frac{\partial }{\partial x}F(s, G(s), \Ga(s)) g_0(s) ds
+\int_0^t \frac{\partial }{\partial x}F(s,  G(s), \Ga(s)) g_1(s) \de B(s)\,. \nonumber
\end{eqnarray} 
This implies 
\begin{eqnarray}
\lim_{|\pi|\rightarrow 0} I_2
=\int_0^t \frac{\partial }{\partial x}F(s, G(s), \Ga(s)) g_0(s) ds
+\int_0^t \frac{\partial }{\partial x}F(s,  G(s), \Ga(s)) g_1(s) \de B(s)\,.  \nonumber\\
\label{proof-t.3.2-i2} 
\end{eqnarray} 
 $I_3$ can be computed as follows.
\begin{eqnarray*}
I_3
&=& 
\sum_{k=0}^{n-1} \int_{t_k}^{t_{k+1}}   \DD F(t_k, G(t_k), \Ga(s)) \frac{d}{ds} \Ga(s) 
 \,    ds \,.
%  \\
%&=& 
%\sum_{k=0}^{n-1} \int_{t_k}^{t_{k+1}} \int_0^T \DD_u F(t_k, G(t_k), \Ga(s)) 
%\frac{\partial }{\partial s}h(s, u) du ds \,. 
\end{eqnarray*}
From here it follow easily  
\begin{eqnarray}
\lim_{|\pi|\rightarrow 0} I_3
&=&   \int_0^t    \DD  F(s, G(s), \Ga(s)) \frac{d }{d  s}\Ga(s)    ds \,. 
\label{proof-t.3.2-i3}
\end{eqnarray}  
Now we combine \eref{proof-t.3.2-i1},\eref{proof-t.3.2-i2} and \eref{proof-t.3.2-i3}
to prove the  theorem. 
\end{proof}

\begin{remark}
 \begin{enumerate}
\item[(i)] $F(0, G(0), \Ga(0))=\eta(\xi(\om), \om)$. Thus, we can replace 
$ \eta(\xi(\om), \om)$ in \eqref{e.ito-formula}    by 
$F(0, G(0), \Ga(0)) $. 
\item[(ii)] We may also write the above  It\^o     formula 
\eqref{e.ito-formula}   in differential form as follows.
\begin{eqnarray}
&&dF(t, G(t), \Ga(t))
= f_0(t, G(t), \Ga(t))dt +f_1(t, G(t), \Ga(t))\de B(t)\nonumber\\ 
&&\qquad  +  \frac{\partial }{\partial x}F(t, G(t), \Ga(t)) \left[ g_0(t) dt
 + g_1(t) \de B(t)\right] 
 \nonumber \\
&&\qquad   + f_1(t, G(t), \Ga(t)) h(t, t, \om) 
dt +   (\DD  F)(t, G(t),  \Ga(t))  \frac{\partial }{\partial t} \Ga(t)  dt\,. 
\nonumber\\
\end{eqnarray}
\end{enumerate} 
\end{remark}
If $F$ and $ G$  are given by It\^o      integrals, then what will be the It\^o     formula?    Similar to the argument  in the  proof of  Proposition \ref{p.2.2} we can use the relationship  
\eqref{e.ito-pathwise-relation}    to 
obtain an analogous It\^o     formula for It\^o     integrals.  

Let 
\begin{eqnarray}
F(t, x, \om)
&=& \eta(x, \om)+\int_0^t f_0(s,x,\om)ds+\int_0^t f_1(s, x, \om) dB(s)\\
G(t,  \om)
&=& \xi( \om)+\int_0^t g_0(s, \om)ds+\int_0^t g_1(s,  \om) dB(s)\,.
\end{eqnarray}
  Let $h$ and $\Ga$ be defined as in Lemma \ref{t.3.1}.  We assume the conditions in Theorem \ref{t.3.2} hold. Moreover, we also assume that $\DD_sf_1^\phi(s)$ and $\DD_sg_1^\phi(s) $ are continuously 
  in $s$. From 
\eqref{e.ito-pathwise-relation} we have 
(we omit the explicit dependence on $\om$)
\begin{eqnarray*}
F(t, x )
&=& \eta(x )+\int_0^t \left[ f_0(s,x) -\DD_s^\phi  f_1(s,x)\right] ds+\int_0^t f_1(s, x ) \de B(s)\\
G(t )
&=& \xi +\int_0^t \left[ g_0(s)-\DD_s^\phi g_1(s)\right] ds+\int_0^t g_1(s )\de
B(s)\,.
\end{eqnarray*}
By the It\^o     formula  \eqref{e.ito-formula}     we have
\begin{eqnarray*}
&&F(t, G(t), \Ga(t))
= \eta(\xi)+\int_0^t \left[ f_0-\DD_s^\phi f_1\right]
(s, G(s), \Ga(s)) ds \\
&&\qquad  + \int_0^t f_1(s, G(s), \Ga(s))\de B(s)+\int_0^t f_1(s, G(s), \Ga(s))
h(s,s, \om ) ds\\
&&\qquad +\int_0^t \frac{\partial }{\partial x} F(s, G(s), \Ga(s))\left[
g_0 -(\DD_s^\phi g_1) \right] (s, \Ga(s)) ds\\
&&\qquad + 
\int_0^t \frac{\partial }{\partial x} F(s, G(s), \Ga(s))\ g_1  (s, \Ga(s)) \de B(s) \nonumber\\
&&\qquad 
+\int_0^t (\DD F)(s, G(s), \Ga(s)) \frac{d}{ds}\Ga(s) ds\,. 
\end{eqnarray*}
Using again the relationship \eqref{e.ito-pathwise-relation}  between pathwise   and It\^o     integral, we can rewrite the above identity as
\begin{eqnarray}
&&F(t, G(t), \Ga(t))\nonumber\\
&&= \eta(\xi)+\int_0^t \left\{  \DD_s^\phi
[f_1(s, G(s), \Ga(s))]+\left[ f_0   -\DD_s^\phi f_1\right]
(s, G(s), \Ga(s)) \right\} ds \nonumber\\
&&  + \int_0^t f_1(s, G(s), \Ga(s)) d B(s)+\int_0^t f_1(s, G(s), \Ga(s))
h(s,s, \om ) ds \nonumber\\
&&\  +\int_0^t \frac{\partial }{\partial x} F(s, G(s), \Ga(s))\left[
g_0 -(\DD_s^\phi g_1) \right] (s, \Ga(s)) ds \nonumber\\
&& + 
\int_0^t \DD_s^\phi \left[ \frac{\partial }{\partial x} F(s, G(s), \Ga(s))\ g_1  (s, \Ga(s))\right]  ds\nonumber\\
&& + 
\int_0^t \frac{\partial }{\partial x} F(s, G(s), \Ga(s))\ g_1  (s, \Ga(s)) d B(s) \nonumber\\
&& 
+\int_0^t (\DD F)(s, G(s), \Ga(s)) \frac{d}{ds}\Ga(s) ds\,. 
\end{eqnarray}

 \section{An iteration  principle}
 After transforming the original equation \eqref{e.ito-equation} 
 into the system of equations  \eqref{e.1.6} (or   
 \eqref{e.5.6} in next section for 
  general random coefficient case),   we can now use the Picard iteration method to solve 
 the new   differential system (pathwise).  But  the second equation in \eqref{e.1.6} involves 
 $\int_0^t \si(s, z(s), \Ga(s))\de B(s)$.  Since $B(s)$ is not differentiable, one cannot no longer 
 use the powerful Gr\"onwall  lemma.  One way to get  around this difficulty is to use the Besov spaces (see e.g. \cite{nualartrascanu}).   Here,  we propose to use the H\"older spaces which seems to be simpler. The idea is motivated by the works \cite{hunualartabel,  hunualarttams}. 
 
 In this section  we present a general contraction principle, which may   be useful 
 in solving other equations driven  by  H\"older continuous 
 functions. We call this approach the fractional Picard iteration.
 In next section we shall use this general contraction principle to solve \eqref{e.1.6} and subsequently to solve \eqref{e.ito-equation}.

Let $\BB$ be a separable Banach space with norm $\|\cdot\|$ (in case we need 
to specify we write $\|\cdot\|_\BB$).  We denote  by 
$\BB[0, T]$ the Banach space  of all continuous functions from $[0, T]$ to $\BB$ 
with the sup norm $\|x\|_{0, T}=\sup_{0\le t\le T}\|x(t)\|_\BB$.
For any   $0\le a<b\le T$ and an element $x\in \BB[0, T]$,  we define
\[
\|x\|_{a, b, \be} =\sup_{a\le s<t\le b} \frac{\|x(t)-x(s)\|}{|t-s|^\be} 
\]
if the above  right hand side is finite.   We also use the notation
$\|x\|_{a, b}=\sup_{a\le t\le b}\|x(t)\| $ and in the  case when $a=b$, we denote 
$\|x\|_{a, a}= \|x(a)\| $.  

 Given $\De\in (0, T]$ and   $\be\in (0, 1]$ we denote 
 \[
\||x\||_{\De,   \be} =\sup_{0\le t\le T}|x(t)|+\sup_{0\le s<t\le T, t-s\le \De}
\frac{\|x(t)-x(s)\|}{|t-s|^\be} \,. 
\]
Denote 
\[
\BB^{\De, \be}[0, T]=\left\{ x\in \BB[0, T]\,; \ \||x\||_{\De,   \be}<\infty\right\}\,. 
\]
As in Theorem 1.3.3 of \cite{kufner}, it is easy to verify that 
$\||x\||_{\De,   \be}$ is a  norm  and $\BB^{\De, \be}[0, T]$ is a Banach space
with respect to this norm.    When we need to emphasize 
the interval we may also add the  interval  into the notation, namely,
we may write $\||x\||_{a, b, \De, \be}$.  If $\De$ is clear, we omit the dependence on
$\De$ and write $\BB^{  \be}[0, T]= \BB^{\De, \be}[0, T]$.

We shall consider a mapping $F$ from   $ \BB[0, T]$ 
into itself.  Thus, for any element $x\in   \BB[0, T]$, $F(x)$ is a 
function from $[0, T]$ to $\BB$.  We can thus write such function as
$F(t, x)$.   We say $F$ is progressive if for any $a\in [0, T]$,
$\left\{F(t, x)\,,\ 0\le t\le a\right\}$  depends  only on $\{x(t), 0\le t\le a\}$.
In other words,  if  $x(t)=y(t)$  for all $t\in [0, a]$,  then   
$F(t,x)=F(t, y)$ for all  $t\in [0, a]$.  
 
 Here is the main theorem of this section.  
 \begin{theorem}\label{t.fix-point-theorem} 
 Let $\BB$ be a separable Banach space  and let    $F $   be a progressive 
 mapping from
 $ \BB[0, T]  \rightarrow 
 \BB [0, T]\ $ such that  $F(0, x)\in \BB$ is independent of 
 $x$ (it is equivalent to say that
 $F(0, x)\in \BB $ is independent of 
 $x(0)$).    Suppose that 
 there are  constants $\kappa$, $\Delta>0$, $\gamma$,  $\be \in (0, 1]$  and there 
 is a positive function $h:\RR^4\rightarrow \RR$,  increasing in all of its arguments,   such that  the following statements are true. 
 \begin{enumerate}
 \item[(i)]  For any   $0\le a<b\le T$ with $b-a\le \De$ and for any $x \in \BB ^{\De, \be }[0, T]$ 
 we have  
 \begin{equation}
 \|F ( x )\|_{a, b, \be }
 \le   
\kappa \big(1+\|x \|_{0, a} +
 \|x \|_{a,b,\be }   (b-a)^\ga  \big)\,. \label{e.F-holder-bound}
%&\|F_2(\cdot, x_1, x_2)\|_{a, b, \be_2}
% \le   
%\kappa \big(\|x_1\|_{0, a}+\|x_2\|_{0, a}\nonumber\\
%&\qquad  +
%\left(\|x_1\|_{a,b,\be_1} 
%+\|x_2\|_{a,b, \be_2} \right) (b-a)^\ga  \big)\,. 
\end{equation}
 \item[(ii)] For any  $0\le a<b\le T$ with $b-a\le \De$ and   for any $x_1,
 x_2 \in \BB ^{\De, \be }[0, T]$ 
 we have  
\begin{eqnarray}
 \|F ( x_1     )-
F ( x_2)\|_{a, b, \be } 
& \le &  \bar h_{a, b}(x_1, x_2)
\bigg\{  \|x_1-x_2\|_{0, a} \nonumber\\
&&\quad + \|x_1 -x_2\|_{a,b,\be }   (b-a)^\ga \bigg\} \,, 
%&\|F_2(\cdot, x_1^{(1)}, x_2 ^{(1)})-
%F_2(\cdot, x_1^{(2)}, x_2 ^{(2)})\|_{a, b, \be_2} \nonumber\\
%&\qquad  \le   \bar h_{a, b}
%\left(\|x_1^{(1)}-x_1^{(2)}\|_{a,b,\be_1} 
%+\|x_2^{(1)}-x_2^{(2)}\|_{a,b, \be_2} \right) (b-a)^\ga  \big)\,,
\label{e.F-diff-holder-bound}
\end{eqnarray}
where
\begin{eqnarray}
\bar h_{a, b}(x_1, x_2)
 =  h\big(\|x_1 \|_{0, a}, \|x_2 \|_{0, a},    \|x_1 \|_{a, b, \be },\|x_2 \|_{a, b, \be }\big)\,.  
\label{e.def-barh}  
\end{eqnarray}
 \end{enumerate}
Then   the mapping $F :\BB^{\be }[0, T]\ 
\rightarrow \BB^{\be }[0, T] $ has a unique 
fixed point $x\in \BB[0, T]$.  This means 
that there is a unique $x\in \BB[0, T]$ such that 
$x(t)=F(t, x )$ for all $t\in [0, T]$.   Moreover, 
there is a $\tau_0>0$ such that 
\begin{equation}
\begin{cases}  
%  \int_0^T |f_0(s, x, \om)|ds<\infty\,, \quad \int_0^T |g_0(s,   \om)|ds<\infty\,, \\
%  \hbox{$f_1(s,x,\om)$ and $g_1(s,\om)$  are  H\"older continuous} \\
%  \qquad\quad \hbox{ with respect to $s$
%of order greater than $1-H$}\,, \\
%  \hbox{$f_1(s,x,\om)$ is Lipschitz in $x$}, \\  
      \|x \|_{0, T  } 
 \le  c_2 e^{c_1 \kappa^{1/\ga}T } (1+\|F(0)\|)  \,,   \\
 \sup_{0\le a< b\le T, b-a\le \tau_0} \|x \|_{a, b, \be } 
\le    c_2 e^{c_1 \kappa^{1/\ga}T }  (1+\|F(0)\|) \,, 
\end{cases} \label{e.bound-holder-x-theorem}
\end{equation} 
where $c_1$ and $c_2$ are two constants depending only on $\De$. %and
%$\displaystyle \tau_0=\De\wedge \frac{1}{ (2\kappa)^{1/\be}}$.  
\end{theorem}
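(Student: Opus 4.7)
The plan is to run a Picard iteration not on all of $[0,T]$ at once, but on a sequence of small intervals whose length is chosen so that the a priori estimate (i) and the contraction estimate (ii) can both be closed in the H\"older seminorm $\|\cdot\|_{a,b,\be}$. The factor $(b-a)^{\ga}$ appearing in (i) and (ii) plays the role of the small parameter that classically would come from Gr\"onwall; since the fractional Brownian paths are only H\"older continuous, one is forced to work in the H\"older scale rather than in the absolutely continuous one.

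First, on an interval $[0,\tau]$ with $\tau\le \De$ and $\kappa\tau^{\ga}\le 1/2$, I would define $x_0\equiv F(0)$ and $x_{n+1}(t)=F(t,x_n)$. Since $F(0,\cdot)$ is independent of its argument one has $x_n(0)=F(0)$ for all $n\ge 0$. Plugging $\|x_n\|_{0,\tau}\le \|F(0)\|+\tau^{\be}\|x_n\|_{0,\tau,\be}$ into (i) gives the uniform bound $\|x_n\|_{0,\tau,\be}+\|x_n\|_{0,\tau}\le C(1+\|F(0)\|)$. For contraction I would apply (ii) with $a=0$: because $(x_{n+1}-x_n)(0)=0$ the memory term $\|x_n-x_{n-1}\|_{0,0}$ vanishes, so
\[
\|x_{n+1}-x_n\|_{0,\tau,\be}\le \bar h\,\tau^{\ga}\,\|x_n-x_{n-1}\|_{0,\tau,\be},
\]
with $\bar h$ controlled via the uniform bound. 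Possibly shrinking $\tau$ further so that $\bar h\,\tau^{\ga}<1/2$ yields a Cauchy sequence in $\BB^{\be}[0,\tau]$ and hence a fixed point on $[0,\tau]$.

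Next I would extend the solution inductively. Assuming $x$ is already defined on $[0,a]$, run the analogous iteration on $[a,a+\tau']$ from $y_0\equiv x(a)$, setting $y_{n+1}(t)=F(t,\tilde y_n)$ where $\tilde y_n$ equals $x$ on $[0,a]$ and $y_n$ on $[a,a+\tau']$. Progressivity ensures $F(t,\tilde y_n)=F(t,x)$ for $t\le a$, and $\|\tilde y_n-\tilde y_{n-1}\|_{0,a}=0$ again reduces the contraction condition to $\bar h\,(\tau')^{\ga}<1/2$. The delicate point is that $\bar h$ depends on $\|x\|_{0,a}$, which grows with $a$. To decouple this I would first derive the a priori estimate from (i) alone: any fixed point of $F$ satisfies, on each $[a,a+\tau]$ with $\kappa\tau^{\ga}\le 1/2$,
\[
\|x\|_{a,a+\tau,\be}\le 2\kappa(1+\|x\|_{0,a}),\qquad \|x(a+\tau)\|\le (1+2\kappa\tau^{\be})\|x\|_{0,a}+2\kappa\tau^{\be}.
\]
A discrete Gr\"onwall-type iteration over $k\le T/\tau$ together with the choice $\tau\sim\kappa^{-1/\ga}$ produces the advertised bound $\|x\|_{0,T}\le c_2 e^{c_1 \kappa^{1/\ga}T}(1+\|F(0)\|)$. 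With this uniform control on $\|x\|_{0,a}$ available, $\bar h$ is bounded uniformly in $a$, so the contraction step size $\tau'$ may be chosen independent of $a$, and $[0,T]$ is then covered by finitely many extensions, furnishing a fixed point on all of $[0,T]$ and both bounds of \eqref{e.bound-holder-x-theorem}.

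Uniqueness follows by comparing two fixed points $x,y$: they agree at $t=0$, and on any $[a,a+\tau']$ where $x\equiv y$ on $[0,a]$ estimate (ii) gives $\|x-y\|_{a,a+\tau',\be}\le \tfrac{1}{2}\|x-y\|_{a,a+\tau',\be}$, forcing coincidence on $[a,a+\tau']$ and propagating by connectedness to $[0,T]$. The main obstacle throughout is the unavailability of classical Gr\"onwall for H\"older controls; the crucial observation making everything work is that in both (i) and (ii) the H\"older seminorm on $[a,b]$ enters only through the small factor $(b-a)^{\ga}$, so contraction is genuinely local in time, while (i) alone already yields the global a priori bound needed to keep the local step size bounded below uniformly in $a$.
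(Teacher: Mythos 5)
Your proposal is correct and follows essentially the same route as the paper's proof: local Picard iteration on intervals of length $\sim\kappa^{-1/\gamma}$ using (i) for uniform bounds and (ii) for contraction (exploiting that increments of consecutive iterates vanish on the already-solved initial segment), then an a priori bound derived from (i) alone via a discrete Gr\"onwall recursion $A_{k+1}\le 2+2A_k$ to control $\bar h$ uniformly and keep the step size bounded below. No substantive differences from the paper's argument.
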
 
 
 \begin{proof}   We divide the proof into several steps.
  
 \noindent {\sl Step 1}. \  First, we prove that there is a  $\tau_1\in (0, \Delta]$   (the choice of 
 $\tau_1$ will be made more precise   later)  such that  $F$  has a unique fixed point 
on the interval $[0, \tau_1]$.    To this end we use Picard iteration.  We define   $ x_0 (t)  = F(0, x) $ for all $t\in [0, \tau_1]$
which is an element in $\BB$ by our assumption that $F(0, x)$ is independent of $x$.   We also define for $n=0, 1, 2, \cdots$ 
\begin{equation}
x_{n+1}(t) =F (t, x_ { n } )\,, \quad 
     t\in  [0, \tau_1]\,. 
\end{equation}

It is easy to see by the assumption  that $x_n(0)=F(0,x )$ for all $n\ge 0$.
From the assumption \eqref{e.F-holder-bound}, we have
\begin{eqnarray*}
\|x_{n+1}\|_{0, \tau_1, \be}
&\le&  \kappa (1+\|x_n(0)\|+\|x_n\|_{0, \tau_1, \be} \tau_1^\ga)\\
&\le&  \kappa (1+\|F(0)\|+\|x_n\|_{0, \tau_1, \be} \tau_1^\ga)\,.
\end{eqnarray*} 
Let 
\begin{equation}
\tau_1\le \frac{1}{(2\kappa)^{1/\ga}}\wedge \De \,. 
\label{e.cond1-tau}
\end{equation}\
[One can take $\tau_1\le \frac{1}{(2\kappa)^{1/\ga}}\wedge \De $.]
Then we have
\begin{equation}
\|x_{n+1}\|_{0, \tau_1, \be } 
 \le    \kappa (1+ \|F(0, x)\|)+\frac12 \|x_n\|_{0, \tau_1, \be}  \,. 
 \label{e.proof-x(n+1)by-xn}
\end{equation} 
By induction, we have
\begin{equation}
\sup _{n\ge 0} \|x_n\|_{0, \tau_1, \be}\le 2\kappa (1+ \|F(0)\|)\,. 
\end{equation}
Now by the fact that $\|x_n\|_{0, \tau_1}\le \|x_n(0)\|+\|x_n \|_{0, \tau_1, \be} \tau_1^\ga$, we see that
\begin{equation}
\sup _{n\ge 0} \|x_n\|_{0, \tau_1 }\le    \|F(0)\|
+ 2\kappa  (1+\|F(0)\|) \tau_1^ \ga 
\le 2 (1+\|F(0)\|) \,. 
\end{equation}
By the definition   \eqref{e.def-barh} of $\bar h$ we see that
\[
\sup _n \bar h_{0, \tau_1} (x_{n-1}, x_n)\le M_1<\infty 
\]
for some positive constant $M_1\in (0, \infty)$. Notice that 
$  x_n(0)=x_{n-1}(0)$.   Thus condition
\eqref{e.F-diff-holder-bound}   gives 
\[
\|F(x_{n+1})-F(x_n)\|_{0, \tau_1, \be}
\le M_1 \|x_{n+1}-x_n\|_{0, \tau_1, \be} \tau_1^\ga \,.
\]
Choose 
\begin{equation}
\tau_1\le \frac{1}{(2M_1)^{1/\ga}}\wedge \frac{1}{(2\kappa)^{1/\ga}}\wedge \De\,.
\label{e.cond2-tau} 
\end{equation}
Then we have
\begin{eqnarray*}
\| x_{n+1} - x_n  \|_{0, \tau_1, \be}
&=& \|F(x_{n })-F(x_{n-1})\|_{0, \tau_1, \be} \\
&\le&  \frac12 \left \| x_{n } - x_{n-1} \right \|_{0, \tau_1, \be} \,. 
\end{eqnarray*}
Since $x_n(0)=F(0)$ for all $n$ this means that 
%\begin{eqnarray*}
%\| x_{n+1} - x_n  \|_{0, \tau_1 }
%&\le&  \frac12 \left(\| x_{n } - x_{n-1}  \|_{0, \tau_1 }\right) \,. 
%\end{eqnarray*}
   $\{x_n\}$ is a Cauchy sequence in $\BB^\be [0, \tau_1]$ and  
it converges to an element  $x\in  \BB^\be [0, \tau_1]$.  Obviously, this limit $x$ 
is the unique solution to   $x(t) =F(t, x)$ for $t\in [0, \tau_1]$.  
Clearly, the limit satisfies
\begin{equation} 
\sup _{n\ge 0} \|x \|_{0, \tau_1 }    
\le 2 (1+ \|F(0)\|) \, ,\quad 
  \|x \|_{0, \tau_1, \be  }    
\le 2 \kappa (1+ \|F(0)\|)  \, . 
\end{equation} 
  
\noindent{\sl Step 2}. \  Now we  explain the inductive argument  to construct 
a unique solution on the interval   $  [0, T\wedge 
T_{k+1} ]$  from a solution
on    $ [0, T\wedge 
T_k ]$,  where $T_k=\tau_1+\cdots+\tau_k$.
%and $\tau_k$ are defined 
%below.  
For any positive integer $k\ge 1$ assume that there
is a unique solution $x(t), t\in [0, T_k]$ satisfying $x(t)=F(t, x), t\in [0, T_k]$.
We want to construct a  unique solution $x(t), t\in [0, T_{k+1}]$ satisfying $x(t)=F(t, x), t\in [0, T_{k+1}]$ for some $\tau_{k+1}>0$ 
(see below for the definition of $\tau_{k+1}$).  To simplify notation we assume $ 
T_{k+1}\le T$ (or we replace $T_{k+1}$ by $T_{k+1}\wedge T$). 
Define the following sequence (still use $x_n$) 
\begin{equation}
\begin{cases}
&   x_0(t)=
\begin{cases}
x(t)\,, &\qquad\qquad\quad   \hbox{when $0\le t\le T_k$}\,, \\
x(T_k)\,, &\qquad\qquad\quad   \hbox{when $T_k \le t\le T_{k+1}$} \,, 
\end{cases} \nonumber\\
&  x_{n+1}(t)=F(t, x_n)\,, \qquad\qquad  \hbox{for all $0\le t\le T_{k+1}$} 
 \,, \label{e.zn-sup}
\end{cases}
\end{equation} 
where $n=0, 1, \cdots$.  
  Since $x(t), t\in [0, T_k]$ is the unique solution to $x(t)=F(t, x), t\in [0, T_k]$,
we see that $x_n(t)=x(t)$ for all $t\in [0, T_k]$.  
With exactly the same argument as for 
\eqref{e.proof-x(n+1)by-xn}, we have 
for any positive integer $k\ge 1$,  
\begin{eqnarray}
\|x_{n+1}\|_{T_k, T_{k+1}, \be } 
 &\le&    \kappa  (1+ \|x\|_{0, T_k} )
  +\frac12 \|x_n\|_{ T_k, T_{k+1}, \be }   
%    \nonumber\\ 
%  &=&    \kappa   \|x\|_{0, T_k} 
%  +\frac12 \|x_n\|_{ T_k, T_{k+1}, \be } 
\end{eqnarray} 
under the condition \
\begin{equation}
\tau_{k+1}\le \frac{1}{(2\kappa)^{1/\ga}}\wedge \De\,.
\end{equation}
[We can take $\tau_{k+1}= \frac{1}{(2\kappa)^{1/\ga}}\wedge \De$]. 
This can be used (by  induction on $n$)  to prove 
\begin{equation}
\sup_{n\ge 0}  \|x_n\|_{ T_k, T_{k+1}, \be }
\le 2 \kappa  (1+ \|x\|_{0, T_k })=:M_{k+1}^{(1)}\,. 
\label{e.proof.bound-holder-xn}
\end{equation}
As a consequence, we have
\begin{eqnarray}
\sup_{n\ge 0}  \|x_n\|_{0, T_{k+1} }
&\le& \|x\|_{0, T_k} +  \|x_n\|_{ T_k, T_{k+1}, \be } \tau_{k+1}^\ga 
\nonumber\\
&\le&  (2 \kappa \tau_{k+1} ^\be+1)  (1+\|x\|_{0, T_k })
\nonumber\\
&\le&  2  (1+\|x\|_{0, T_k })=:M_{k+1}^{(2)}
\label{e.x-uniform-bound}
%\nonumber\\
%&\le&  (2 \kappa \tau^\be+1)^{k+1}   \|F(0) \| 
\,. 
\end{eqnarray}
Now letting  $M_{k+1}:=h(M_{k+1}^{(2)}, M_{k+1}^{(2)}, M_{k+1}^{(1)}, M_{k+1}^{(1)}   )$,  we have by \eqref{e.F-diff-holder-bound} 
\begin{eqnarray}
\|x_{n+1}-x_n\|_{ T_k, T_{k+1}, \be} 
&=& \|F(x_{n })-F(x_{n-1})\|_{ T_k, T_{k+1}, \be} \nonumber\\
&\le& M_{k+1} \|x_{n }-x_{n-1}\|_{ T_k, T_{k+1}, \be}  \tau_{k+1}^\ga \nonumber\\
&\le& \frac12  \|x_{n }-x_{n-1}\|_{ T_k, T_{k+1}, \be}  
\end{eqnarray}
if 
\begin{equation}
\tau_{k+1}\le\frac{1}{M_{k+1}^{1/\ga}}\wedge \frac{1}{(2\kappa)^{1/\ga }}\wedge \De\,.
\label{e.cond2-tau=k+1}
\end{equation}
Thus, under the above condition \ref{e.cond2-tau=k+1}, 
$\{x_n\}$ is a Cauchy sequence in $\BB  [0, T_{k+1}]$.
It has a unique limit $x$ which satisfies $x(t)=F(t, x)$ for all $t\in
[0,   T\wedge T_{k+1}]$.  Indeed,  the fact $x(t)$ satisfies $x(t)=F(t, x)$ for all $t\in
[0,   T\wedge T_{k}]$  follows from the inductive assumption. On $[T\wedge T_k, T\wedge T_{k+1}]$,  $x_n$ is a Cauchy sequence in $B^\be [T\wedge T_k, T\wedge T_{k+1}]$   and $F$ is continuous on  $B^\be [T\wedge T_k, T\wedge T_{k+1}]$ by the assumption 
\eqref{e.F-diff-holder-bound}.    It is also easy to verify  from \eqref{e.proof.bound-holder-xn}
and \eqref{e.x-uniform-bound} 
that the solution satisfies
\begin{equation}
\begin{cases}
\|x \|_{ T_k, T_{k+1}, \be }
\le 2 \kappa    (1+\|x \|_{0, T_k})\,;  \\ 
  \|x \|_{0, T_{k+1} }  
 \le   2  (1+ \|x \|_{0, T_k})\,.
 \end{cases} 
  \label{e.4.100} 
\end{equation}

\noindent {\sl Step 3}. 
 Denote  $T_\infty=T\wedge(\tau_1+\tau_2+\cdots)$.  By induction argument, we can construct a unique solution $x(t)$ on $t\in [0, T_\infty]$  for  the equation $x(t)=F(t, x)$.  We want to show 
 $T_\infty=T$.   To do this, the idea is to show that $\tau_k\ge \tilde \tau_0$   for some $\tilde \tau_0>0$ and for all $k\ge 1$.

%  From the above argument,  there is a unique solution
% $x$ to $x(t)=F(t, x)$ for $t\in [0, T_\infty]$.  
From the same argument as for \eqref{e.proof.bound-holder-xn}
and \eqref{e.x-uniform-bound}  
%(we define $x_0=x$ and then
%$x_n=x$  for all $n$) 
we have
\begin{equation}
\begin{cases}
 & \|x \|_{ k\tau, (k+1)\tau, \be }
\le 2 \kappa  (1+ \|x \|_{0, k\tau})\,;  \\
&
  \|x \|_{0, (k+1)\tau }  
 \le   2  (1+ \|x \|_{0, k\tau })  
\end{cases}\label{e.proof-x-bound} 
\end{equation}
as long as  $\displaystyle \tau\le \frac{1}{(2\kappa)^{1/\ga}}$  and
$(k+1)\tau\le T_\infty$.  In fact, to obtain the above bounds, we only need to use the condition \eref{e.F-holder-bound}.

We choose $\displaystyle \tau= \frac{1}{(2\kappa)^{1/\ga}}\wedge \Delta $  and divide 
the interval $[0, T_\infty]$  into $N$ sub-intervals,  where 
\[
N=\left[\frac{T_\infty}{\tau}\right]+1=\left[\frac { T_\infty  } {\De}\right]
\vee \left[ T_\infty (2\kappa)^{1/\ga} \right]+1\,.
\] 
Denote $A_k= \|x \|_{0, k\tau }$.  The second inequality in \eqref{e.proof-x-bound}
can be written as 
\[
A_{k+1}\le 2 +2 A_k\,,  \quad k=0, 1, 2, \cdots
\]
%(with a different choice of $\kappa$). 
 An
elementary induction argument yields
\begin{eqnarray*}
A_k 
&\le& 2+2^2+\cdots +2^k+2^k A_0\\
%=\kappa \frac{\kappa^k-1}{\kappa-1}+A_0\kappa^k \,. 
&\le& 2^{k+1} +2^{k }  A_0\,. 
\end{eqnarray*}
Thus, 
we see that 
\begin{eqnarray}
   \|x \|_{0, T_\infty  } 
%&\le&  (2 \kappa \tau^\be+1)^N   \|F(0) \| \\
&\le&   2^{N+1}+2^N  \|F(0)\| 
\nonumber\\
&\le&  c_2 e^{c_1 \kappa^{1/\ga}T } (1+\|F(0)\|)\  
\label{e.bound-uniform-x-proof}
\end{eqnarray}
for some constants $c_1$ and $c_2$ dependent only on   $\De$. 
This together with the first  inequality in \eqref{e.proof-x-bound}  yields
\begin{equation}
\|x \|_{ k\tau, (k+1)\tau, \be } 
\le  c_2 e^{c_1 \kappa^{1/\ga}T } (1+\|F(0)\|)
\label{e.bound-holder-x-proof}
\end{equation}
for any $k$ such that $(k+1)\tau\le T_\infty$ and for
$\displaystyle \tau= \frac{1}{(2\kappa)^{1/\ga}}\wedge \De$.

\noindent{\sl Step 4}. \ 
Denote  
\begin{equation*}
\begin{cases}
\tilde M_1&=\tilde M_2=  c_2 e^{c_1 \kappa^{1/\ga}T } (1+\|F(0)\|) \,;\nonumber\\
 \tilde M
 &= h(\tilde M_2, \tilde M_2, \tilde M_1, \tilde M_1)\,. \nonumber 
\end{cases}
\end{equation*}
Then from \eqref{e.proof.bound-holder-xn} and \eqref{e.x-uniform-bound}
we see that $M_{k+1}^{(1)}\le \tilde M_1$ and 
 $M_{k+1}^{(2)}\le \tilde M_2$ for all $k$.   Since $h$ is increasing in all of its arguments,
% we see $M_{k+1}\le \tilde M$. 
%
%Then condition \eqref{e.F-diff-holder-bound} 
%implies 
%\begin{eqnarray}
%\|x_{n+1}-x_n\|_{T_k, T_{k+1}}
%&=& \|F(x_n)-F(x_{n-1}\|_{T_k, T_{k+1}} \nonumber\\
%&\le& \tilde M \|x_{n }-x_{n-1}\|_{T_k, T_{k+1}} \tau^\be\\
%\nonumber\\
%&\le& \frac12 \|x_{n }-x_{n-1}\|_{T_k, T_{k+1}}  
%\end{eqnarray}
this means that 
 we can choose
%  $\displaystyle \tilde \tau= \frac{1}{\tilde M^{1/\be}}\wedge 
%\frac{1}{(2\kappa)^{1/\be}}\wedge \De$.  From here we see that we can choose
  $\tau_k$ such that
  \[
  \tau_k\ge \tilde \tau:\displaystyle  = \frac{1}{\tilde M^{1/\ga}}\wedge 
 \frac{1}{(2\kappa)^{1/\ga}}\wedge \De\,, \quad \forall \ k\ge 1\,.
\]
  Since $\tilde \tau$ is independent of 
  $k$, we see that  $T_\infty=T$.
  
 The first  inequality \eqref{e.bound-holder-x-theorem} 
 is a straightforward consequence of \eqref{e.bound-uniform-x-proof}.
 With possibly a different choice of $c_1$ and $c_2$, we can write
 \eqref{e.bound-holder-x-proof}  as 
 \begin{equation*}
\|x \|_{ k\tau_0, (k+1)\tau_0, \be } 
\le    c_2 e^{c_1 \kappa^{1/\ga}T } (1+\|F(0)\|)\,. 
\end{equation*} 
If $a, b\in [k\tau_0,   (k+1)\tau_0]$,  then we see 
easily that the second inequality  in \eqref{e.bound-holder-x-theorem}
holds.  If $a\in   [(k-1)\tau _0,  k\tau_0]$
and $b\in   [k\tau_0,   (k+1)\tau_0]$,  then 
\begin{eqnarray*}
\frac{ \|x(b)-x(a)\|}{|b-a|^\be}
&\le& \frac{ \|x(b)-x(k\tau_0)\|+\|x(k\tau)-x(a)\|}{|b-a|^\be}\\
&\le& \frac{  \|x(b)-x(k\tau_0)\|}
{|b-k \tau_0|^\be}+\frac{\|x(k\tau)-x(a)\|}{|k\tau_0-a|^\be}\\
&\le&   \|x\|_{(k-1)\tau _0,  k\tau_0, \be} +\|x\|_{k\tau _0,  (k+1)\tau_0, \be}\\
&\le& 2 c_2 e^{c_1 \kappa^{1/\ga}T }(1+\|F(0)\|)\,.
\end{eqnarray*}
Up to a different choice of constant $c_2$, we prove 
the second inequality in \eqref{e.bound-holder-x-theorem}.
\end{proof}
 
\begin{remark} If $\bar h$ has some particular form, one may obtain
some stability results for the solutions. Namely, if $x_1$ and $x_2$ are two solutions with different initial conditions, or with different 
$F$, one may bound $\|x_2-x_1\|$ (see analogous results \cite{hunualartabel, hunualarttams}).  However,  we shall not persuade this problem.  
\end{remark}  
 
\section{General stochastic differential equations}
\subsection{Reduction of the equation}
Let $b\,, \si:[0, T]\times \RR\times \Om\rightarrow \RR$ be
a measurable mapping.
% from the product measurable space
%$[0, T]\times \RR\times \Om $ to $\RR$.  
We shall specify
the conditions on them later. Let $\eta$ be a given random variable.
The main objective of this paper is to study  the following
 It\^o     stochastic differential equation
\begin{equation}
\begin{cases}
dx(t)=b(t, x(t), \om)dt+\si(t, x(t), \om)dB(t)\,, \quad 0\le t\le T\,, \\
x(0)=\eta\,,
\end{cases} \label{e.5.1} 
\end{equation}
where $dB(t)$ is the It\^o     differential.  
%Of course the equation should be understood in its integral form
%\begin{equation}
%x(t)=\eta+\int_0^t b(s, x(s), \om)ds+\int_0^t \si(s, x(s), \om)dB(s)\,.
%\end{equation}
%If the stochastic differential is in the pathwise sense, then
%the equation has now been studied by many authors.
% Other type of stochastic differential equations
%have also been studied, see for example, \cite{DU}, \cite{CD}.

We can use the argument as in the introduction (see
e.g.  \eqref{e.1.6})   to reduce  the above equation
\eqref{e.5.1}, now with   random  coefficients.
Using the relationship  \eqref{e.ito-pathwise-relation}
 between the It\^o       and pathwise   stochastic
integrals   and the chain rule for   derivative we have
\begin{eqnarray*}
\int_0^t \si(s, x(s), \om)dB(s)
&=&\int_0^t \si(s, x(s), \om)\de B(s) -\int_0^t \DD^\phi_s \left[\si(s, x(s), \om)
\right]ds \\
&=& \int_0^t \si(s, x(s), \om)\de B(s) -\int_0^t \DD^\phi_s \left[\si\right]
(s, x(s), \om)ds \\
&&\qquad -\int_0^t \si_x (s, x(s), \om)\DD^\phi_s x(s) ds \,,
\end{eqnarray*}
where $\si_x$ denotes the partial derivative of $\si$ with respect to
$x$, and $\DD^\phi_s \left[\si\right]$ denotes the partial derivative of
$\si$ with respect to the random element $\om$.
Thus,    the equation \eqref{e.5.1}  may be written as
\begin{eqnarray}
x(t)&=&\eta+\int_0^t \tilde b(s, x(s), \om)ds+
\int_0^t \si(s, x(s), \om)\de B(s)\nonumber\\
&&\qquad 
 -\int_0^t \tilde \si (s, x(s), \om)\DD^\phi_s x(s) ds\,, 
 \label{e.5.5}
\end{eqnarray}
%\begin{eqnarray}
%x(t)&=&\eta+\int_0^t b(s, x(s), \om)ds+\int_0^t \si(s, x(s), \om)\de B(s)\nonumber\\
%&&\qquad -\int_0^t \DD^\phi_s \left[\si\right]
%(s, x(s), \om)ds  -\int_0^t \si_x(s, x(s), \om)\DD^\phi_s x(s) ds\,.
%\end{eqnarray}
where 
\begin{equation}
\begin{cases}\tilde b(s, x, \om):=b(s, x, \om)- \DD^\phi_s \left[\si\right]
(s, x, \om)\\ 
\tilde \si(s, x, \om)=  \si_x(s, x, \om)\,. 
\end{cases} 
\end{equation} 
 
As explained in the introduction, this equation can be considered as a  
  first order nonlinear hyperbolic equation of infinitely many variables,
driven by fractional Brownian  motion,  where $\om \in \Om$ is considered 
as an infinite dimensional variable.  We shall use the elementary characteristic curve 
method.  The characteristic equation will be an equation in $\Om$ which takes the 
form of the first equation of the following system of equations. 
This means that 
to solve the above equation \eqref{e.5.5}  we  will first solve  the following
coupled system of equations (which we call  it the system 
of characteristic equations corresponding to
\eqref{e.5.1}).   
\begin{equation}
\begin{cases}\Gamma(t)=\om+\int_0^t  \tilde \si  (s, z(s), \Gamma(s))\int_0^\cdot \phi(s,
u)du ds\,;\\ \\
%z(t)=\eta(\om ) +\int_0^t \tilde b(s, z(s), %\Gamma(s))ds +\int_0^t \si(s, z(s),
%\Gamma(s))\de B(s) +\int_0^t\int_0^s  \si(s, z(s), %\Ga(s)) h(u, s, \om )du ds\cr   
z(t)= \eta(\om ) +\int_0^t \tilde b(s, z(s), \Gamma(s))ds
+\int_0^t \si(s, z(s),
\Gamma(s))\de B(s)\\ 
\qquad\qquad  +\int_0^t\int_0^s  \si(s, z(s), \Ga(s)) \tilde \si (u, z(u),
\Ga(u)) \phi(s, u) duds\,. 
\end{cases} \label{e.5.6}
\end{equation}
We shall show that the solution to equation \eqref{e.5.6} 
can be used to express the solution of \eqref{e.5.5}. However, first  we need to show that 
\eqref{e.5.6} has a (unique) solution.
 
\subsection{Solution to the reduced equation}
In this section we prove that the system 
\eqref{e.5.6} has a unique solution.  
When the Hurst parameter $H>1/2$ and in the absence of $\DD^\phi_s x(s)$,  the equation
\eqref{e.5.5} has been studied by many authors (see \cite{nualartrascanu, zahle, hustochastics}   for a recent study and also for some more references).   We only mention two works.  
In \cite{nualartrascanu},   Besov spaces  are used to accommodate the solutions. 
In \cite{hunualartabel}, the solution is shown to
be H\"older continuous and the stability with respect to H\"older norm is also studied in that paper  (see \cite{hunualarttams}
for a similar study when the Hurst parameter $H\in (1/3, 1/2]$).
Here,   we shall use the H\"older spaces  together with the general contraction principle 
established in Section 4   to prove the existence and uniqueness of the solution.
Our  idea to solve the equation
\eqref{e.5.5} seems   also new even in the classical case    (namely, in the absence of $\DD^\phi_s x(s)$ in \eqref{e.5.5}).  

The system of (two)  equations \eqref{e.5.6} will be solved for any fixed 
$\om\in \Om$.  This means that we are going to find pathwise solution 
of  \eqref{e.5.6} by using  Theorem \ref{t.fix-point-theorem}.
To this end, we rewrite the equation \eqref{e.5.6} with a replacement of 
 $\Ga$  of by $\Ga+\om$ (we use the same notation $\Ga(t)$ without ambiguity).  
\begin{equation}
\begin{cases}\Gamma(t)= \int_0^t  \tilde \si (s, z(s), \Gamma(s)+\om)\int_0^\cdot \phi(s,
u)du ds\,;\\ \\ 
z(t)= \eta(\om ) +\int_0^t \tilde b(s, z(s), \Gamma(s)+\om)ds
+\int_0^t \si(s, z(s),
\Gamma(s)+\om)\de B(s)\\ 
\qquad\qquad  +\int_0^t\int_0^s  \si(s, z(s), \Ga(s)+\om) \tilde \si(u, z(u),
\Ga(u)+\om) \phi(s, u) duds\,. 
\end{cases} \label{e.to-solve}
\end{equation}

Before we solve \eqref{e.to-solve},  we need to explain the space  that the solution  stay. 
 To find such a  space to accommodate the above $\Ga$ we introduce the following
Banach space:
\begin{eqnarray}
\HH
&=&\HH_p=\bigg\{ h: \cT\rightarrow \RR\,; \ \hbox{$h$ is absolutely 
continuous}    \nonumber\\
&&\qquad\hbox{such that}\  \int_0^T |\dot h(s)|^p ds<\infty\bigg\}\,,
\end{eqnarray}
where $p$ is a  number such that $p\in (1, \frac{1}{2-2H})$ 
(we shall fix such a number throughout the remaing part of this paper),  and  where    the norm is defined by
\[
\|h\|_\HH =\|h\|_{\HH_p} =\left( \int_0^T | \dot h(s)|^p ds\right)^{1/p}\,. 
\]
It is straightforward to see that any $h\in \HH$ is an element
of $\Om$ and
\[
\|h\|_\Om\le c_{p, T} \|h\|_\HH\,.
\]
\begin{remark} 
The principle to choose the Banach space $\HH$ is as follows. 
First, we need that $\frac{d}{dt}\Ga(t)\in \HH$. 
Secondly, we want the norm of $\HH$ is as strong as possible
so that the coefficients $\sigma$, $\tilde \si$, and $\tilde b$ are differentiable
on $\om$ with respect to this norm $\|\cdot\|_\HH$. Namely, with respect to $\om$,   the coefficients
$\si$ and $b$ and the  initial condition $\eta$ satisfy 
\begin{equation}
\left| \si( \om+h)-\si(\om)\right|\le C \|h\|_\HH \,, \quad \forall \ h\in \HH\,. 
\label{e.5.8}
\end{equation}
(Similar inequality for $\tilde b$, $\tilde \si$,  and the initial conditions).  
Of course, the larger the norm of $\HH$,   the  broader  the condition  
\eqref{e.5.8} will be.  In the  analysis of nonlinear Wiener functionals, it is known that many interesting 
random variables do  not satisfy  \eqref{e.5.8} with $\HH=\Om$ 
(see the  example  of L\'evy area in \cite{hugaussian}).  But usually \eqref{e.5.8} is satisfied 
when $\HH$ is the Cameron-Martin norm, which is given by
$\|h\|_{\HH_\phi}^2:=\int_0^T\int_0^T \phi(u-v) h(u) h(v) dudv$, in our case of frcational Brownian motion.  Namely, for   many  
random variables in stochastic analysis, such as the solution of a stochastic differential equation, 
we have 
\begin{equation}
\left| f( \om+h)-f(\om)\right|\le C \|h\|_{\HH_\phi} \,, \quad \forall \ h\in \HH_\phi \,. 
\label{e.5.9}
\end{equation}
An inequality  of Littlewood-Paley type
(\cite{mmv}) states 
\[
\|h\|_{\HH_\phi }\le C_H \|h\|_{\HH_q}\,,  \quad \forall \ h\in \HH_q\,,\quad \hbox{with $q:=1/H$}\,.  
\]
When $H>2/3$,  we have $q=\frac{1}{H}< \frac{1}{2-2H}$.  In this case, we see that the 
\eqref{e.5.9} implies \eqref{e.5.8}  when we choose $p$ close to $\frac{1}{2-2H}$.   This means that the condition \eqref{e.5.8} is  satisfied
for many    random variables  we encounter when $H>2/3$.   
\end{remark}

Before we proceed to solve \eqref{e.to-solve},   we 
state  some assumptions on the coefficients $b$ and $\si$.  
%that we need to make. 
\begin{hypothesis}\label{h.b-sigma} Let $\cL$ be a positive constant. 
The measurable functions $b, \si:\cT\times \RR\times \Om\rightarrow 
\RR$ satisfy the following conditions.  
\begin{enumerate}
\item[(i)] $b$ is continuously differentiable in $x$ and satisfies 
\[
\begin{cases}
  |b(t,x,\om)|
 \le   \cL(1+|x|)\,;\\
 \left|\frac{\partial }{\partial x} b(t,x,\om)\right|
 \le   \cL \,;\\
 \left\|\DD b(t,x,\om)\right\|_\HH 
 \le   \cL \,.   
\end{cases}
\]
\item[(ii)] $\si(t,x,\om)$ is twice continuously differentiable in $x$ with bounded first and second derivative and satisfies 
\[
\begin{cases}
 |\si(t,x,\om)|
 \le   \cL(1+|x|)\,; \\
%   |\si(t,x,\om)-\si(s, x, \om)|
% \le   \cL |t-s|\,; \\
  |\frac{\partial }{\partial t} \si(t,x,\om)-
  \frac{\partial }{\partial t} \si(t, y, \tilde \om)|
  \le \cL (|x-y|+\|\om-\tilde \om\|_\HH)\,;\\  
  |\frac{\partial}{\partial x} \si(t,x,\om)|+|\frac{\partial^2}{\partial x^2} \si(t,x,\om)|
 \le   \cL\,; \\ 
 |\DD_t^\phi \si(t,x,\om)|
 \le   \cL(1+|x|)\,;\\
 \|\DD    \si(t,x,\om) \|_\HH 
 +\|\DD ^2   \si(t,x,\om) \|_{\HH^2} \le \cL\,;\\ 
  \|\DD   \frac{\partial}{\partial x} \si(t,x,\om) \|_\HH \le \cL\,. 
%  \\
%|\si(t,x,\om)-\si(t,y,\om')|
%&\le&  \cL (|x-y|+|\om-\om'|_\HH)\ 
\end{cases}
\]
%
%$\DD_s^\phi \si(t,x,\om)$ is 
\end{enumerate}
\end{hypothesis}

From now on we denote $\BB=\HH\oplus  \RR$ and we denote by
$\BB[0, T]$ the space of all continious functions from $[0, T]$
ro $\BB$.  Similarly, we will also use the notation $\HH[0, T]$.
$\RR[0, T]$ is then the space  $C([0, T])$ 
of all continuous functions from 
$[0, T]$ to $\RR$.  

Define a mapping from  $\BB[0, T]$ to $\BB[0, T]$ as follows.
\begin{equation}
\begin{cases}
F_1(t, \Ga, z):= \int_0^t  \tilde \si (s, z(s), \Gamma(s)+\om)\int_0^\cdot \phi(s,
u)du ds\,;\\ \\ 
F_2(t, \Ga, z):= \eta(\om ) +\int_0^t \tilde b(s, z(s), \Gamma(s)+\om)ds
+\int_0^t \si(s, z(s),
\Gamma(s)+\om)\de B(s)\\ 
\qquad\qquad\qquad   +\int_0^t\int_0^s  \si(s, z(s), \Ga(s)+\om) \tilde \si(u, z(u),
\Ga(u)+\om) \phi(s, u) duds\,. 
\end{cases} \label{e.def-F}
\end{equation}
 We also write $F_i(\Ga, z)=F_i(t, \Ga, z)$,  $i=1, 2$.  
 It is easy to see that for any $(\Ga, z)\in   \BB[0, T]$,
 $(F_1(\Ga, z), F_2(\Ga,z))$ is also in $\BB[0, T]$.  
\begin{lemma}\label{l.uniform-bound-f1} For any $\tau \in \cT$, if $(\Ga, z)\in \BB[0,\tau ]$,  then
$F_1(\Ga, z)\in \HH [0,  \tau ] $ and 
\begin{eqnarray}
\|\frac{d}{dt}F_1(\Gamma, z)\|_\HH&\le& \kappa  \,,  \label{e.5.17}\\
 \|F_1(\Ga, z)\|_{ 0, \tau }& \le&  \kappa \tau   \,,
\label{e.5.18}
\end{eqnarray} 
where and in what follows
$\kappa=c_{p, H, T, \cL}$ is a constant depending only on $p, H$,  $T$ 
and the  bound $\cL$ of the coefficients $b$ and $\si$, which may vary at different occurrences.   
\end{lemma}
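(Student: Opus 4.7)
The plan is to compute the $t$-derivative of $F_1$ pointwise as an $\HH$-valued function, bound its $\HH$-norm uniformly in $t$, and then integrate to obtain the bound on $F_1$ itself. Writing out the definition of $F_1$, for each $r\in[0,T]$ we have
\[
F_1(t,\Ga,z)(r)=\int_0^t \tilde\si(s,z(s),\Ga(s)+\om)\lp\int_0^r \phi(s,u)\,du\rp ds,
\]
so differentiating in $t$ gives the $\HH$-valued function whose value at $r$ is $\tilde\si(t,z(t),\Ga(t)+\om)\int_0^r \phi(t,u)\,du$. Its (weak/Weyl) derivative in $r$ is simply $\tilde\si(t,z(t),\Ga(t)+\om)\,\phi(t,r)$.

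The next step is to bound the $\HH$-norm. By Hypothesis \ref{h.b-sigma}(ii) we have $|\tilde\si(t,x,\om)|=|\si_x(t,x,\om)|\le \cL$ uniformly in $(t,x,\om)$, hence
\[
\Big\|\tfrac{d}{dt}F_1(t,\Ga,z)\Big\|_\HH^p \le \cL^p\int_0^T |\phi(t,r)|^p\,dr = [\cL H(2H-1)]^p\int_0^T |t-r|^{p(2H-2)}\,dr.
\]
Because $p<\tfrac{1}{2-2H}$ by the standing choice made in the definition of $\HH=\HH_p$, the exponent satisfies $p(2H-2)>-1$, so the last integral is finite and uniformly bounded in $t\in[0,T]$ by a constant $C_{p,H,T}$. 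This gives \eqref{e.5.17} with $\kappa=\cL\,[C_{p,H,T}]^{1/p}$, depending only on $p,H,T$ and the bound $\cL$.

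For \eqref{e.5.18}, note that $F_1(0,\Ga,z)=0$, so $F_1(t,\Ga,z)=\int_0^t \tfrac{d}{ds}F_1(s,\Ga,z)\,ds$ as a Bochner integral in $\HH$ (equivalently, this identity can be verified pointwise in $r$ and then one takes the $L^p$-norm of the $r$-derivative). Applying Minkowski's integral inequality,
\[
\|F_1(t,\Ga,z)\|_\HH \le \int_0^t \Big\|\tfrac{d}{ds}F_1(s,\Ga,z)\Big\|_\HH\,ds \le \kappa\,t \le \kappa\,\tau
\]
for every $t\in[0,\tau]$, which is exactly \eqref{e.5.18}.

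There is essentially no real obstacle here: the lemma is a uniform-norm estimate whose only delicate point is the integrability of $|\phi(t,\cdot)|^p$, which is precisely what dictated the choice $p<\tfrac{1}{2-2H}$ in the definition of the Banach space $\HH_p$. All remaining bounds use only the uniform sup-bound on $\si_x$ from Hypothesis \ref{h.b-sigma}(ii), so the resulting constant $\kappa=c_{p,H,T,\cL}$ has the claimed form.
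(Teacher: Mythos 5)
Your proof is correct and follows essentially the same route as the paper: differentiate $F_1$ in $t$, note that the $r$-derivative of the resulting $\HH$-valued function is $\tilde\si(t,z(t),\Ga(t)+\om)\phi(t,r)$, bound it using $|\si_x|\le\cL$ and the finiteness of $\int_0^T\phi(t,r)^p\,dr$ guaranteed by $p<\tfrac{1}{2-2H}$, and then integrate from $0$ using $F_1(0)=0$ to get the sup bound. Your write-up is in fact slightly more explicit than the paper's about why the exponent condition makes the integral converge.
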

\begin{proof} From the definition of $F_1$ we see 
\[
\frac{d}{dt} F_1(\Ga, z)=  \tilde \si (t, z(t), \Gamma(t)+\om)\int_0^\cdot \phi(t,
u)du \,.  
\]
Recall that we fix $p<\frac{1}{2-2H}$.   Since $\tilde \si$ is bounded and  $\phi(t,u)=H(2H-1) |t-u|^{2H-2}$, we have
\begin{eqnarray*}
\| \frac{d}{dt} F_1(\Ga, z)\|_\HH^p 
% \le   \cL   \int_0^\zeta
%\phi(t,u)^p   
 \le  \kappa    \int_0^T
\phi(t,u)^p du
 =     \kappa \,. 
\end{eqnarray*}
This proves \eqref{e.5.17}.  
Similarly, since $F_1(0)=0$,  we have 
\begin{eqnarray*}
\| F_1(\Ga, z)\|_{ 0, \tau}\le \| \frac{d}{dt} F_1(\Ga, z)\|_{ 0, \tau }\tau
&=&    \kappa\tau  \,, 
\end{eqnarray*}
proving \eqref{e.5.18}. 
\end{proof}

\begin{lemma} \label{l.uniform-bound-f2}  
 For any $\tau\in \cT$, if $(\Ga, z)\in \BB[0, \tau]$,  then
$F_2(\Ga, z)\in C[0, \tau]$ and  for any $0\le a<b\le \tau$, 
\begin{eqnarray}
  \|F_2\|_{a,b, \be} 
 & \le&  \kappa    \left(1+ \|B\|_{a, b, \be}\right)
  \Big\{ 1+\|z\|_{0, a}\nonumber\\
  &&\qquad +
  \|z\|_{a, b, \be}(b-a)^{ \be}  +\|  \Ga\|_{a, b, \be } (b-a) ^\be   \Big\}\,. 
  \label{e.sup-f2} 
\end{eqnarray}
\end{lemma}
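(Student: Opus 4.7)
The plan is to fix $a\le s<t\le b$ and split $F_2(t)-F_2(s)$ into its three natural pieces: the drift $\int_s^t\tilde b(r,z(r),\Ga(r)+\om)\,dr$, the pathwise Young integral $I(s,t):=\int_s^t\si(r,z(r),\Ga(r)+\om)\de B(r)$, and the $\phi$-double-integral correction $\int_s^t\si(r,z(r),\Ga(r)+\om)\int_0^r\tilde\si(u,z(u),\Ga(u)+\om)\phi(r,u)\,du\,dr$. I will bound each increment, divide by $(t-s)^{\be}$, and then take the supremum over $a\le s<t\le b$.

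For the drift, I would combine $|b(r,x,\om)|\le\cL(1+|x|)$ with $|\DD^\phi_r\si(r,x,\om)|\le\cL(1+|x|)$ from Hypothesis~\ref{h.b-sigma} to conclude $|\tilde b(r,x,\om)|\le\kappa(1+|x|)$, yielding a bound of $\kappa(1+\|z\|_{0,b})(t-s)\le \kappa T^{1-\be}(1+\|z\|_{0,b})(t-s)^{\be}$. The $\phi$-double integral is handled the same way: since $\int_0^r\phi(r,u)\,du=Hr^{2H-1}$ is uniformly bounded on $\cT$, and $|\si|\le\cL(1+|x|)$, $|\tilde\si|\le\cL$, this piece is dominated by $\kappa(1+\|z\|_{0,b})(t-s)^{\be}$ as well. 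Neither of these two contributions needs the factor $\|B\|_{a,b,\be}$.

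The substantive step is the Young integral. I will invoke the classical Young bound
\[
\bigl|I(s,t)-\si(s,z(s),\Ga(s)+\om)[B(t)-B(s)]\bigr|\le C_\be\,\|\si(\cdot,z(\cdot),\Ga(\cdot)+\om)\|_{s,t,\be}\,\|B\|_{s,t,\be}\,(t-s)^{2\be}
\]
together with the trivial estimate $|\si(s,z(s),\Ga(s)+\om)||B(t)-B(s)|\le\cL(1+\|z\|_{0,a})\|B\|_{a,b,\be}(t-s)^{\be}$. For the $\be$-H\"older seminorm of the composite integrand I would chain the Lipschitz bounds of $\si$ in $t$, in $x$, and in $\om$ from Hypothesis~\ref{h.b-sigma}, the last one converting the $\HH$-differentiability $\|\DD\si\|_\HH\le\cL$ into the pointwise Lipschitz estimate $|\si(r,x,\om_1)-\si(r,x,\om_2)|\le\cL\|\om_1-\om_2\|_\HH$ and then evaluating at $\om_i=\Ga(r_i)+\om$. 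This yields
\[
\|\si(\cdot,z(\cdot),\Ga(\cdot)+\om)\|_{s,t,\be}\le\cL\bigl[(t-s)^{1-\be}+\|z\|_{a,b,\be}+\|\Ga\|_{a,b,\be}\bigr]\,,
\]
and dividing the full Young bound by $(t-s)^{\be}$ produces exactly the target shape $\kappa\|B\|_{a,b,\be}\bigl(1+\|z\|_{0,a}+(\|z\|_{a,b,\be}+\|\Ga\|_{a,b,\be})(b-a)^{\be}\bigr)$.

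Summing the three contributions and applying the elementary sub-additivity $\|z\|_{0,b}\le\|z\|_{0,a}+\|z\|_{a,b,\be}(b-a)^{\be}$ gives \eqref{e.sup-f2} after absorbing the constant $T^{1-\be}$ into $\kappa$. The step I expect to be the main obstacle is the H\"older estimate of the composite integrand in the Young piece, where one must carefully handle the $\om$-dependence through the $\HH$-norm on $\Ga$ (rather than the weaker sup norm on $\Om$) so that the factor $\|\Ga\|_{a,b,\be}$ in the stated bound is an $\HH$-valued H\"older norm compatible with the range of $F_1$ in Lemma~\ref{l.uniform-bound-f1}; once that identification is made, the remaining work is routine Young-integral bookkeeping.
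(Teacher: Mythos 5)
Your decomposition of $F_2$ into the drift, the pathwise integral, and the $\phi$-double-integral correction is exactly the paper's ($F_2=\eta+F_{21}+F_{22}+F_{23}$), and your bounds for the drift and the double integral coincide with the paper's almost line for line, including the observation that neither needs the factor $\|B\|_{a,b,\be}$. The one genuine divergence is in the treatment of the pathwise integral $F_{22}$: the paper estimates $\int_a^b\si_r\,\de B(r)$ through the fractional integration-by-parts representation $\int_a^b D^{1-\al}_{b-}B_{b-}(r)\,D^{\al}_{a+}\si_r\,dr$ and bounds the Weyl derivative $D^{\al}_{a+}\si_r$ directly, whereas you invoke the classical Young--Lo\`eve estimate $\bigl|I(s,t)-\si_s[B(t)-B(s)]\bigr|\le C_\be\|\si_\cdot\|_{s,t,\be}\|B\|_{s,t,\be}(t-s)^{2\be}$. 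These are interchangeable here: the inner integral $\int_a^r(\si_r-\si_\rho)(r-\rho)^{-\al-1}d\rho$ in the paper's computation is controlled by precisely the same H\"older seminorm of the composite integrand that you compute, namely $\|\si(\cdot,z(\cdot),\Ga(\cdot)+\om)\|_{s,t,\be}\le\cL[(t-s)^{1-\be}+\|z\|_{a,b,\be}+\|\Ga\|_{a,b,\be}]$, obtained by chaining the Lipschitz bounds in $t$, $x$ and $\om$ from Hypothesis~\ref{h.b-sigma} (with the $\om$-increment measured in $\HH$, as you correctly insist). The paper's route keeps the whole section in the fractional-calculus framework already set up in Proposition~\ref{p.integration-by-parts} and is the one reused verbatim in Lemma~\ref{l.F-diff-holder-norm-bound}; yours is marginally more self-contained for a reader who knows Young integration but not Weyl derivatives. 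One cosmetic slip: in your ``trivial estimate'' you write $\cL(1+\|z\|_{0,a})$ for $|\si(s,z(s),\Ga(s)+\om)|$ with $s\in[a,b]$, where $\|z\|_{0,b}$ is what the linear-growth bound actually gives; since you apply the sub-additivity $\|z\|_{0,b}\le\|z\|_{0,a}+\|z\|_{a,b,\be}(b-a)^{\be}$ at the end, the final estimate is unaffected.
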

\begin{proof}  First,  we write 
\[
F_2=\eta(\om)+F_{21}+F_{22}+F_{23}\,,
\]
where
\[
\begin{cases}
 F_{21}(t)= \int_0^t \tilde b(s, z(s), \Gamma(s)+\om )ds\,;\nonumber\\
 F_{22}(t)= \int_0^t \si(s, z(s),
\Gamma(s)+\om)\de B(s)\,; \nonumber\\ 
 F_{23}(t) = \int_0^t\int_0^s  \si(s, z(s), \Ga(s)+\om) \tilde \si(u, z(u),
\Ga(u)+\om) \phi(s, u) duds\,. 
\end{cases}
\]
From the assumption on $b$ and $\DD_s^\phi \si$, we see that
for any $0\le a<b\le \tau$,  we have
\begin{eqnarray*}
|F_{21}(b) -F_{21}(a)|
&=& \int_a^b |\tilde b(s, z(s), \Gamma(s)+\om)|ds\\
&\le& \kappa    \int_a^b \left[1+|z(s)|\right] ds\\
 &\le& \kappa 
   \left[ 1+\|z\|_{ a, b }\right]  (b-a) \,. 
\end{eqnarray*}
This implies that
\begin{eqnarray}
\|F_{21}\|_{ a, b , \be}
&\le&   \kappa 
  \left[1+ \|z\|_{ a, b } \right] 
  %(b-a) ^{1-\be}
   \nonumber\\
&\le&   \kappa 
  \left[1+|z(a)|+ \|z\|_{ a, b, \be  } (b-a)^\be  \right] 
%  (b-a) ^{1-\be} 
    \,. 
  \label{e.sup-f21} 
\end{eqnarray}
Now we consider $F_{23}$. We have
\begin{eqnarray*}
\left|F_{23}(b)-F_{23}(a)\right|
&=&\int_a^b\int_0^s  |\si(s, z(s), \Ga(s)+\om) \tilde \si(u, z(u),
\Ga(u)+\om) \phi(s, u)| duds  \\
&\le& \kappa  
\int_a^b\int_0^s \left[ 1+ |  z(s)|  \right] \phi(s, u)  duds  \\
&\le&  \kappa   \left[ 1+ \|z\|_{ a, b } \right] 
\int_a^b\int_0^s     \phi(s, u)  duds \\
&\le& \kappa   \left[ 1+ \|z\|_{ a, b } \right]   (b^{2H}-a^{2H})\\
&\le& \kappa   \left[ 1+ \|z\|_{ a, b } \right]  (b-a)\,. 
\end{eqnarray*}
This implies 
%that 
%\begin{equation}
%\|F_{23}\|_{ a, b , \be}\le \kappa   \left[ 1+ \|z\|_{ 0, b } \right]   (b-a) ^{1-\be} \,. 
% \label{e.sup-f23-add} 
% \end{equation} 
%  On the other hand it is obvious that
%  \[
%   \|z\|_{ 0, b } \le  \|z\|_{ 0, a } + \|z\|_{ a, b } (b-a)^\be \,.
%  \]
%  This combined with \eqref{e.sup-f23-add}  yields
 \begin{equation}
\|F_{23}\|_{ a, b , \be}\le \kappa   \left[ 1+ \|z\|_{ 0, a } 
+ \|z\|_{ a, b } (b-a)^\be \right]     \,. 
 \label{e.sup-f23} 
 \end{equation} 
 $F_{22} $ is more complicated to handle because the fractional Brownian motion 
 $B$ is not differentiable. Denote
 $\si_r=\si(r, z(r), \Ga(r)+\om)$.  We have  for an $\al\in (1-\be, \be)$,  
 \begin{eqnarray*}
&&\left|\int_a^b \si(r, z(r), \Ga(r)+\om) \de B(r)\right|
= \left| \int_a^b D_{b-}^{1-\al} B_{b-}(r) D_{a+} ^\al \si_rdr \right| \\
&&\qquad \le  \kappa \|B\|_{a, b, \be} \Big|  \int_a^b (b-r)^{\al+\be-1} \Big\{
\frac{\si_r}{(r-a)^\al }+\int_a^r \frac{\si_r-\si_\rho}{(r-\rho)^{\al+1}}d\rho\Big\}dr\Big|\\
&&\qquad \le    \kappa   \|B\|_{a, b, \be}   \Big\{
(1+\|z\|_{a, b })  (b-a)^\be 
%+ \|z\|_{a, b, \be} (b-a)^{2\be} +\|\dot 
%\Ga\|_{a, b } 
+(\|z\|_{a,b,\be} +\|\Ga\|_{a,b,\be}) (b-a)^{ 2\be}  \Big\} \,. 
 \end{eqnarray*}
This implies 
\begin{eqnarray}
  \|F_{22}\|_{a, b, \be}
  &\le & \kappa   \|B\|_{a, b, \be}   \Big\{
 1+\|z\|_{0, a}    + ( \|z\|_{a, b, \be}+\|\Ga\|_{a,b,\be}) 
 (b-a)^{ \be}\Big\}   \,. 
% +
% \|  \Ga\|_{a, b , \be} (b-a)  ^\be 
%  \Big\}\nonumber\\
% &\le&  \kappa   \|B\|_{a, b, \be}   \Big\{
% 1+\|z\|_{0, a}    + \|z\|_{a, b, \be}(b-a)^{ \be}  +\|  \Ga\|_{a, b, \be  } (b-a)  
%  \Big\}\,.  
 \label{e.sup-f22} 
  \end{eqnarray} 
Combining \eqref{e.sup-f21}, \eqref{e.sup-f23} and \eqref{e.sup-f22}, we    prove the lemma.
 \end{proof}

To bound the H\"older norm of the difference, we first need  the following simple general result.
\begin{lemma}\label{l.one-comp-holder} Let $B_1$  and $B_2$ be 
two  Banach spaces
with  norms $\|\cdot\|_1$  and $\|\cdot\|_2$ and let $f: B_1\rightarrow B_2$ be twice continuously (Frechet) differentiable  
with bounded first and second derivatives.  
\begin{enumerate}
\item[(i)]  If $x_1, x_2, y_1, y_2\in B_1$,  then
\begin{eqnarray}
&&\|f(y_2)-f(y_1)-f(x_2)+f(x_1)\|_2
\le \|f' \|_\infty \| y_2-y_1- x_2+  x_1\|_1 \nonumber\\
&&\qquad \qquad +
\|f''\|_\infty \left[ \|y_1-x_1\|_1 +\|y_2-x_2\|_1 \right]\|
x_2-x_1\|_1   \,. 
\label{e.four-point}
\end{eqnarray}
\item[(ii)] 
Let $ x_1, x_2: [a, b]\rightarrow B_1$ be
H\"older continuous of order $\be$.  Then for any $a\le s<t\le b$, we have 
\begin{eqnarray}
&&|f(x_2(t))-f(x_1(t))-f(x_2(s)) +f(x_1(s))|_2 
\le \|f' \|_\infty \|  x_2-  x_1\|_{s, r, \be } (r-s)^\be \nonumber\\
&&\qquad \qquad +
\|f''\|_\infty \left[ \|x_1\|_{s,r, \be}+\|x_2\|_{s,r, \be}\right]\|
x_2-x_1\|_{s,r} (r-s)^\be  \,. \nonumber\\
\label{e.5.one-comp-holder}
\end{eqnarray}
%If $x_1$ and $x_2$ are differentiable, then 
%\begin{eqnarray}
%&&|f(y_2)-f(y_1)-f(x_2)+f(x_1)|_2 
%\le \|f' \|_\infty \|\dot x_2-\dot x_1\|_{s, r } (r-s) \nonumber\\
%&&\qquad \qquad +
%\|f''\|_\infty \left[ \|\dot x_1\|_{s,r}+\|\dot x_2\|_{s,r }\right]\|x_2-x_1\|_{s,r}  (r-s) \,. 
%\label{e.5.one-comp-differentiable}
%\end{eqnarray}
\end{enumerate}
\end{lemma}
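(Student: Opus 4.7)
The plan for part (i) is to reduce the second-difference expression to a double integral form so that the two bounds (with $\|f'\|_\infty$ and $\|f''\|_\infty$) fall out separately. By the fundamental theorem of calculus,
\begin{align*}
f(y_2)-f(y_1) &= \int_0^1 f'\bigl(y_1+\theta(y_2-y_1)\bigr)(y_2-y_1)\,d\theta,\\
f(x_2)-f(x_1) &= \int_0^1 f'\bigl(x_1+\theta(x_2-x_1)\bigr)(x_2-x_1)\,d\theta.
\end{align*}
I would split the difference of these two expressions into
\[
\int_0^1 f'\bigl(y_1+\theta(y_2-y_1)\bigr)\bigl[(y_2-y_1)-(x_2-x_1)\bigr]d\theta
\]
plus
\[
\int_0^1 \bigl[f'\bigl(y_1+\theta(y_2-y_1)\bigr)-f'\bigl(x_1+\theta(x_2-x_1)\bigr)\bigr](x_2-x_1)\,d\theta.
\]
The first piece is bounded directly by $\|f'\|_\infty\|y_2-y_1-x_2+x_1\|_1$.

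For the second piece, the key observation is that
\[
\bigl[y_1+\theta(y_2-y_1)\bigr]-\bigl[x_1+\theta(x_2-x_1)\bigr]=(1-\theta)(y_1-x_1)+\theta(y_2-x_2),
\]
so by the mean value theorem applied to $f'$ together with the convexity of the norm,
\[
\bigl\|f'\bigl(y_1+\theta(y_2-y_1)\bigr)-f'\bigl(x_1+\theta(x_2-x_1)\bigr)\bigr\|\le \|f''\|_\infty\bigl[(1-\theta)\|y_1-x_1\|_1+\theta\|y_2-x_2\|_1\bigr].
\]
Integrating $\theta$ from $0$ to $1$ and multiplying by $\|x_2-x_1\|_1$ gives the second term of \eqref{e.four-point}. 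The only mildly delicate point — which I regard as the main ``obstacle'' even though it is very light — is to rewrite the interpolation difference as a convex combination of $y_1-x_1$ and $y_2-x_2$ rather than the more naive $y_1-x_1$ plus a remainder; without this, one obtains only $\|y_1-x_1\|_1+\|y_2-y_1-x_2+x_1\|_1$, which would not match the cleaner statement of the lemma.

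Part (ii) is then a direct consequence of part (i) with the substitution $x_1\mapsto x_1(s)$, $x_2\mapsto x_2(s)$, $y_1\mapsto x_1(t)$, $y_2\mapsto x_2(t)$ (and presumably $r=t$, correcting the apparent typo in the statement). The four quantities appearing in \eqref{e.four-point} are then estimated using the H\"older seminorms:
\[
\|x_2(t)-x_1(t)-x_2(s)+x_1(s)\|_1\le \|x_2-x_1\|_{s,t,\be}(t-s)^\be,
\]
\[
\|x_i(t)-x_i(s)\|_1\le \|x_i\|_{s,t,\be}(t-s)^\be\quad (i=1,2),\qquad \|x_2(s)-x_1(s)\|_1\le \|x_2-x_1\|_{s,t}.
\]
Plugging these bounds into \eqref{e.four-point} reproduces \eqref{e.5.one-comp-holder} at once, so no further argument is needed.
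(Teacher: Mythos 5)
Your proof is correct and follows essentially the same route as the paper: both write the two increments via the fundamental theorem of calculus, split off the term with $f'$ acting on $y_2-y_1-x_2+x_1$, and bound the remaining difference of $f'$ along the two interpolation segments by a second mean value argument using the convex-combination identity $(1-\theta)(y_1-x_1)+\theta(y_2-x_2)$. Part (ii) is likewise obtained exactly as the paper intends, by the substitution $x_i\mapsto x_i(s)$, $y_i\mapsto x_i(t)$ and the H\"older seminorm estimates (with $r=t$).
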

\begin{proof}  This inequality may be  well-known. 
We include  a short proof for the completeness. 
Using  the mean value theorem we have 
\begin{eqnarray*}
&& f(y_2)-f(y_1)-f(x_2)+f(x_1) \\
&&
=\int_0^1 f'( (1-\theta) y_1+\theta  y_2 )d\th  (y_2-y_1 )  \\
&&
-\int_0^1 f'((1-\theta) x_1+\theta x_2 )d\th  ( 
x_2-x_1)  \\
&&
=\int_0^1 f'((1-\theta) y_1+\theta  y_2 )d\th  (y_2-y_1-
x_2+x_1)  \\
&&  +\int_0^1\left[
f'((1-\theta) y_1+\theta  y_2 )- f'((1-\theta) x_1+\theta  x_2 ) \right] d\th (x_2-x_1)\\
&&=\int_0^1 f'(y_1+\theta (y_2-y_1))d\th  (y_2-y_1-
x_2+x_1)  \\
&&  +\int_0^1\int_0^1  
f''\left(\upsilon  \left[ y_1+
  \theta (y_2-y_1) \right] +(1-\upsilon)   
  \left[ (1-\theta) x_1+\theta  x_2 \right] \right)   d\th d\upsilon
  \\
  &&\qquad (x_2-x_1) \otimes  \left[ (1-\theta) (y_1-x_1 ) +\theta ( y_2   - x_2 ) \right]\,.   
\end{eqnarray*}
This proves \eqref{e.four-point}  easily.  The inequality 
\eqref{e.5.one-comp-holder} is  straightforward  consequences of 
\eqref{e.four-point}. 
\end{proof} 

\begin{lemma}\label{l.F-diff-holder-norm-bound} Denote 
\[
F^{(i)}_j( t)=F_j(t, \Ga_i, z_i)\,,\quad i, j=1,2\,. 
\]
Then, we have 
\begin{eqnarray}
&&\left\|\frac{d}{dt} F^{(2)}_1( t)-\frac{d}{dt}F^{(1)}_1(t)\right\|_\HH
  \le \kappa   \left[\|z_2 -z_1 \|_{0, \tau} +\|\Ga_2
 -\Ga_1 \|_{0, \tau}\right]    \nonumber\\  
\label{e.holder-f1}
\end{eqnarray}
and 
\begin{eqnarray}
&&\left\| F_{2}^{(2)} -F_{2}^{(1)}
\right\|_{a, b, \be}   
\le    
\kappa  (1+\|B\|_{a,b, \be}) \big(1+\|z_1\|_{0, a}+\|z_2\|_{0,a}+\|z_1\|_{a, b, \be}\nonumber\\
&&\qquad +
\|z_2\|_{a, b, \be} + \|\Ga_1\|_{a,b, \be}+\|\Ga_2\|_{a,b, \be}\big) 
\bigg[ |z_2(a)-z_1(a)|+\|\Ga_2(a)-\Ga_1(a)\|  \nonumber\\
&&\qquad +\left[ \|z_2-z_1\|_{a, b, \be} +\|  \Ga_2-  \Ga_1\|_{a, b, \be}\right]    
(b-a)^{\be} \bigg] \,. 
%  \le   \cL^2 (b-a) ^{1-\be} 
% (1 +\|z_1\|_{a, b}) \left[\|z_2-z_1\|_{0, b}+\|
% \Ga_2-\Ga_1\|_{0, b}\right]\nonumber\\
% &&\qquad+ \kappa 
%\cL \|B\|_{a,b, \be} \bigg[\|z_2-z_1\|_{a, b, \be} +\| \Ga_2- \Ga_1\|_{a, b, \be}\nonumber\\
%&&\qquad +\left(1+\|z_1\|_{a, b, \be}+
%\|z_2\|_{a, b, \be}+\|  \Ga_2\|_{a,b, \be }\right)      \|z_2-z_1\|_{a,b}\nonumber\\ 
%&&\qquad +\left(1+\|z_1\|_{a, b, \be}+
%\| \Ga_1\|_{a, b, \be }+\| \Ga_2\|_{a,b, \be}\right)      \|\Ga_2-\Ga_1\|_{a,b}\bigg]
%(b-a)^{\be}  \,. \nonumber \\  
 \label{e.holder-f2}
\end{eqnarray}
\end{lemma}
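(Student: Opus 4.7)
The plan mirrors the proofs of Lemmas \ref{l.uniform-bound-f1} and \ref{l.uniform-bound-f2}, upgrading the pointwise bounds on the coefficients to Lipschitz and four-point estimates on differences by means of Hypothesis \ref{h.b-sigma} and Lemma \ref{l.one-comp-holder}. For \eqref{e.holder-f1}, I first compute
\[
\frac{d}{dt}\bigl(F_1^{(2)}-F_1^{(1)}\bigr)(t)=\bigl[\tilde\sigma(t,z_2(t),\Gamma_2(t)+\omega)-\tilde\sigma(t,z_1(t),\Gamma_1(t)+\omega)\bigr]\int_0^\cdot\phi(t,u)\,du.
\]
Since $|\partial_x\tilde\sigma|\le\cL$ and $\|\DD\tilde\sigma\|_\HH\le\cL$, the bracketed factor is bounded by $\cL(|z_2(t)-z_1(t)|+\|\Gamma_2(t)-\Gamma_1(t)\|_\HH)$, while the $\HH$-norm of $\int_0^\cdot\phi(t,u)du$ was shown to be at most $\kappa$ inside the proof of Lemma \ref{l.uniform-bound-f1}. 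Taking the supremum over $t\in[0,\tau]$ yields \eqref{e.holder-f1}.

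For \eqref{e.holder-f2} I split $F_2=\eta(\omega)+F_{21}+F_{22}+F_{23}$ and treat each summand separately. The pieces $F_{21}$ and $F_{23}$ are absolutely continuous in $t$, so their H\"older differences on $[a,b]$ reduce to supremum bounds on the integrands. Applying the mean value theorem in $(x,\omega)\in\RR\times\HH$ to $\tilde b(s,\cdot,\cdot)$ and to the product $\sigma(s,\cdot,\cdot)\tilde\sigma(u,\cdot,\cdot)$ respectively, together with Hypothesis \ref{h.b-sigma} and the bound $\int_0^s\phi(s,u)du\le\kappa$, gives contributions of order $\kappa(1+\|z_i\|_{0,b})(\|z_2-z_1\|_{0,b}+\|\Gamma_2-\Gamma_1\|_{0,b})(b-a)^{1-\be}$. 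Since $(b-a)^{1-\be}\le T^{1-\be}$, the standard decompositions $\|z_i\|_{0,b}\le\|z_i\|_{0,a}+\|z_i\|_{a,b,\be}(b-a)^\be$ and $\|z_2-z_1\|_{0,b}\le|z_2(a)-z_1(a)|+\|z_2-z_1\|_{a,b,\be}(b-a)^\be$ (and likewise for $\Gamma$) embed these inside the right-hand side of \eqref{e.holder-f2}.

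The main obstacle is $F_{22}$, which requires the full four-point inequality \eqref{e.four-point}. Writing $\sigma_r^{(i)}=\sigma(r,z_i(r),\Gamma_i(r)+\omega)$ and using fractional integration by parts with any $\al\in(1-\be,\be)$,
\[
\int_a^b(\sigma_r^{(2)}-\sigma_r^{(1)})\,\de B_r=\int_a^b D_{b-}^{1-\al}B_{b-}(r)\,D_{a+}^\al(\sigma^{(2)}-\sigma^{(1)})_r\,dr,
\]
with $|D_{b-}^{1-\al}B_{b-}(r)|\le\kappa\|B\|_{a,b,\be}(b-r)^{\al+\be-1}$. The boundary term $(r-a)^{-\al}(\sigma_r^{(2)}-\sigma_r^{(1)})$ inside $D_{a+}^\al(\sigma^{(2)}-\sigma^{(1)})_r$ is controlled by pointwise Lipschitz in $(x,\omega)$, contributing the uniform differences $|z_2(r)-z_1(r)|+\|\Gamma_2(r)-\Gamma_1(r)\|_\HH$ which split as above into the difference factor of \eqref{e.holder-f2}. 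For the double-integral piece $\al\int_a^r(r-\rho)^{-\al-1}[(\sigma^{(2)}-\sigma^{(1)})_r-(\sigma^{(2)}-\sigma^{(1)})_\rho]\,d\rho$ I apply \eqref{e.four-point} to $f=\sigma(s,\cdot,\cdot):\RR\times\HH\to\RR$ with $x_i=(z_i(\rho),\Gamma_i(\rho)+\omega)$ and $y_i=(z_i(r),\Gamma_i(r)+\omega)$: the $\|f'\|_\infty$-piece produces $(\|z_2-z_1\|_{a,b,\be}+\|\Gamma_2-\Gamma_1\|_{a,b,\be})(r-\rho)^\be$, while the $\|f''\|_\infty$-piece produces the cross coefficient $(\|z_1\|_{a,b,\be}+\|z_2\|_{a,b,\be}+\|\Gamma_1\|_{a,b,\be}+\|\Gamma_2\|_{a,b,\be})(r-\rho)^\be$ times pointwise $L^\infty$-differences (which split again). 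Integrating $(r-\rho)^{\be-\al-1}$ on $[a,r]$ and $(b-r)^{\al+\be-1}$ on $[a,b]$ produces the joint factor $(b-a)^{2\be}$; dividing by $(b-a)^\be$ for the H\"older seminorm and reassembling yields \eqref{e.holder-f2}. The choice $\al\in(1-\be,\be)$ is precisely what makes both integrals convergent; the technical difficulty lies in the careful bookkeeping needed to check that each of the four terms generated by \eqref{e.four-point} lands in the correct block of the right-hand side of \eqref{e.holder-f2}.
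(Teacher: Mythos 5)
Your overall route is the same as the paper's: the bound \eqref{e.holder-f1} by Lipschitz continuity of $\tilde\si$ plus the $L^p$-integrability of $\phi(t,\cdot)$, the decomposition $F_2=\eta+F_{21}+F_{22}+F_{23}$, direct sup-norm estimates for $F_{21}$ and $F_{23}$, and for $F_{22}$ the fractional integration by parts with $\al\in(1-\be,\be)$ followed by the four-point inequality of Lemma \ref{l.one-comp-holder} applied to the increment of $\si^{(2)}-\si^{(1)}$. Your one-shot application of \eqref{e.four-point} to $f=\si(s,\cdot,\cdot)$ on the product space $\RR\times\HH$ is a legitimate compression of the paper's hand-made decomposition into $J_1,\dots,J_4$ (the hypotheses on $\partial_x\si$, $\partial_x^2\si$, $\DD\si$, $\DD^2\si$ and $\DD\partial_x\si$ do give bounded first and second Fr\'echet derivatives in the joint variable), and it reproduces the same two blocks of the right-hand side of \eqref{e.holder-f2}.

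There is, however, one concrete omission in the step you yourself identify as the main obstacle. The quantity $\si_r^{(i)}=\si(r,z_i(r),\Ga_i(r)+\om)$ depends on $r$ through \emph{three} slots, and the four-point difference $(\si^{(2)}-\si^{(1)})_r-(\si^{(2)}-\si^{(1)})_\rho$ therefore contains a contribution from the variation of the explicit first (time) argument from $\rho$ to $r$. Applying \eqref{e.four-point} to $f=\si(s,\cdot,\cdot)$ with $s$ frozen, as you do with $x_i=(z_i(\rho),\Ga_i(\rho)+\om)$ and $y_i=(z_i(r),\Ga_i(r)+\om)$, captures only the variation in $(x,\om)$ and silently discards the time-slot term. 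This leftover is exactly the term the paper isolates as $J_5$, namely $[\si(r,z_2(\rho),\Ga_2(\rho)+\om)-\si(\rho,z_2(\rho),\Ga_2(\rho)+\om)]-[\si(r,z_1(\rho),\Ga_1(\rho)+\om)-\si(\rho,z_1(\rho),\Ga_1(\rho)+\om)]$; it is \emph{not} small of the required product form unless one invokes the specific assumption in Hypothesis \ref{h.b-sigma} that $\partial_t\si(t,x,\om)$ is Lipschitz in $(x,\om)$, which yields $|J_5|\le\kappa(r-\rho)\left[\|z_2-z_1\|_{\rho,r}+\|\Ga_2-\Ga_1\|_{\rho,r}\right]$. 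Without this term your estimate of $I_2$ is incomplete; with it, the integration in $\rho$ and $r$ goes through exactly as you describe and the lemma follows. The gap is easily repaired, but as written the argument for $F_{22}$ does not close.
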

\begin{proof}   Let $(\Ga_1, z_1)$ and $(\Ga_2, z_2)$  be two elements in
$\BB[0, \tau]$.    We recall 
\[
\begin{cases}
 F_1^{(i)}(t)= \om+\int_0^t  \tilde \si (s, z_i(s), \Gamma_i(s)+\om )\int_0^\cdot \phi(s,
u)du ds\,; \nonumber\\
 F_2^{(i)}(t)= \eta(\om ) +\int_0^t \tilde b(s, z_i(s), \Gamma_i(s)+\om )ds
+\int_0^t \si(s, z_i(s),
\Gamma_i(s)+\om )\de B(s)\nonumber\\ 
 \qquad\qquad    +\int_0^t\int_0^s  \si(s, z_i(s), \Ga_i(s)+\om ) \tilde \si(u, z_i(u),
\Ga_i(u)+\om ) \phi(s, u) duds\,.\nonumber
\end{cases} 
\]
To simplify notation we also denote
\begin{eqnarray*}
 b^{(i)}(s)&=&   b(s, z_i(s), \Gamma_i(s)+\om )\,, \quad 
\tilde b^{(i)}(s)= \tilde b(s, z_i(s), \Gamma_i(s)+\om )\,,\\
 \si ^{(i)}(s)&=& \si  (s, z_i(s), \Gamma_i(s)+\om )\,, \quad
\tilde \si ^{(i)}(s)=\tilde \si (s, z_i(s), \Gamma_i(s)+\om )\,.  
\end{eqnarray*} 
We have  for any $t\in [0, \tau]$, 
\begin{eqnarray*}
&&\left\|\frac{d}{dt} F^{(2)}_1( t)-\frac{d}{dt}F^{(1)}_1(t)\right\|_\HH
=\left\|\int_0^\cdot \left[ \tilde \si^{(2)}(t)-\tilde \si^{(1)}(t)\right] \phi(t, u) du\right\|_\HH \\
&&\qquad \le\kappa   \left\|\int_0^\cdot\left[ |z_2(t)-z_1(t)|  +\|\Ga_2
(t)-\Ga_1(t)\|_\HH \right]   \phi(t, u) du \right\|_\HH \\
&&\qquad \le\kappa   \left[\|z_2 -z_1 \|_{0, \tau} +\|\Ga_2
(t)-\Ga_1(t)\|_{0, \tau}\right]    \left[ \int_0^T \phi(t, u)^p du \right]^{1/p}   \\ 
&&\qquad \le \kappa   \left[\|z_2 -z_1 \|_{0, \tau} +\|\Ga_2
(t)-\Ga_1(t)\|_{0, \tau}\right]   \,. 
\end{eqnarray*}
This is \eqref{e.holder-f1}. 

As in the proof of Lemma \ref{l.uniform-bound-f1}   we   denote 
\[
\begin{cases}
F_{21}^{(i)}(t)=  \int_0^t \tilde b(s, z_i(s), \Gamma_i(s)+\om )ds\nonumber\\
F_{22}^{(i)}(t)= \int_0^t \si(s, z_i(s),
\Gamma_i(s)+\om )\de B(s)\nonumber\\  
F_{23}^{(i)}(t)=\int_0^t\int_0^s  \si(s, z_i(s), \Ga_i(s)+\om ) \tilde \si(u, z_i(u),
\Ga_i(u)+\om ) \phi(s, u) duds\,.\nonumber
\end{cases} 
\]
For any $a, b\in [0, \tau]$,  we have
\begin{eqnarray*}
&&|F_{21}^{(2)}(b)-F_{21}^{(1)}(b)-F_{21}^{(2)}(a)+F_{21}^{(1)}(a)|\nonumber\\
&&\quad =\int_a^b |\tilde b_2(r)-\tilde b_1(r)|dr\nonumber\\
&&\quad \le\kappa  \int_a^b\left[  |z_2(r)-z_1(r)  |+\|\Ga_2(r)-\Ga_1(r)\|_\HH\right] dr\nonumber\\
&&\quad \le \kappa  \left[  |z_2 -z_1   |_{a,b}+\|\Ga_2 -\Ga_1 \|_{a,b}\right] (b-a)\,. 
\end{eqnarray*}
This yields 
\begin{eqnarray}
\|F_{21}^{(2)} -F_{21}^{(1)}\|_{a, b, \be} 
&\le&  \kappa   \left[  |z_2 -z_1   |_{a,b}+\|\Ga_2 -\Ga_1 \|_{a,b}\right] (b-a)^{1-\be}
\nonumber\\
&\le&  \kappa   \left[ |z_2(a)-z_1(a)|+\|\Ga_2(a)-\Ga_1(a)\|\right](b-a)^{1-\be}\nonumber\\
&& +\kappa   \left[
 |z_2 -z_1   |_{a,b, \be }+\|\Ga_2 -\Ga_1 \|_{a,b, \be }\right] (b-a) \nonumber\\
&\le&  \kappa   \left[ |z_2(a)-z_1(a)|+\|\Ga_2(a)-\Ga_1(a)\|\right] \nonumber\\
&& +\kappa  \left[
 |z_2 -z_1   |_{a,b, \be }+\|\Ga_2 -\Ga_1 \|_{a,b, \be }\right] (b-a) ^\be 
 \,. 
\label{e.holder-f21}
\end{eqnarray}
 Now we find the bounds for  $F_{22}^{(i)}$.  We have 
\begin{eqnarray}
&&|F_{22}^{(2)}(b)-F_{22}^{(1)}(b)-F_{22}^{(2)}(a)+F_{22}^{(1)}(a)| \nonumber\\
&&\quad = \left|\int_a^b \left( \si^{(2)}(r) -\si^{(1)}(r) \right) \de B(r)\right|\nonumber\\
&&\quad =  \left|\int_a^b D_{b-}^{1-\al} B_{t-} (r)
D_{a+}^\al \left( \si^{(2)}(r) -\si^{(1)}(r) \right)   dr\right|\nonumber\\
&&\quad =  \frac{1}{\Ga(1-\al)}  \Bigg|\int_a^b  D_{b-}^{1-\al} B_{t-}(r)  
\Bigg( \frac{\si^{(2)}(r) -\si^{(1)}(r)}{(r-a)^\al} \nonumber\\
&&\qquad\quad 
+\al  \int_a^r \frac{\si^{(2)}(r) -\si^{(1)}(r)-\si^{(2)}(\rho) +\si^{(1)}(\rho)}{
(r-\rho)^{\al+1}} d\rho \Bigg)   dr\Bigg|\nonumber\\
&&\qquad 
\le \kappa \|B\|_{a, b, \be}\left[I_1+I_2\right]\,,\label{e.proof-bound-by-i1-i2}
\end{eqnarray}
where
\begin{eqnarray}
I_1
&=& \int_a^b   (b-r)^{\al+\be-1} 
  \frac{\left|\si^{(2)}(r) -\si^{(1)}(r)\right|}{(r-a)^\al}  dr\,; \label{e.def-i1} \\
  I_2
  &=& \int_a^b  (b-r)^{\al+\be-1}  \int_a^r \frac{
  \left|\si^{(2)}(r) -\si^{(1)}(r)-\si^{(2)}(\rho) +\si^{(1)}(\rho)\right| }{
(r-\rho)^{\al+1}} d\rho  dr\,. \nonumber\\ 
\label{e.def-i2} 
\end{eqnarray}
It is easy to see that
\begin{equation}
I_1\le \kappa  \left[ \|z_2-z_1\|_{a, b}+\|\Ga_2-\Ga_1\|_{a, b}\right] (b-a)^\be\,.
\label{e.diff-f2-i1}
\end{equation}
To bound $I_2$,  we need the following identity.
\begin{eqnarray}
\si^{(2)}(r)-\si^{(1)}(r)-\si^{(2)}(\rho)+\si^{(1)}(\rho)
=J_1+J_2+J_3+J_4+J_5\,,
\end{eqnarray}
where
\[
\begin{cases}
 J_1
= \si(r, z_2(r), \Ga_2(r)+\om )-\si(r, z_1(r), \Ga_2(r)+\om)\nonumber\\
 \qquad  \qquad -\si(r, z_2(\rho), \Ga_2(r)+\om)+\si(r, z_1(\rho), \Ga_2(r)+\om)\,; \nonumber\\
 J_2
= \si(r, z_1(r), \Ga_2(r)+\om)-\si(r, z_1(r), \Ga_1(r)+\om)\nonumber\\
\qquad  \qquad -\si(r, z_1(r), \Ga_2(\rho)+\om)+\si(r, z_1(r), \Ga_1(\rho)+\om)\,;\nonumber\\
 J_3
= \si(r, z_1(r), \Ga_2(\rho  )+\om)-\si(r, z_1(r), \Ga_1(\rho )+\om)\nonumber\\
 \qquad \qquad -\si(r, z_1(\rho), \Ga_2(\rho)+\om)+\si(r, z_1(\rho), \Ga_1(\rho)+\om)\,;\nonumber\\
 J_4
= \si(r, z_1(\rho), \Ga_2(\rho  )+\om)-\si(r, z_1(\rho), \Ga_2(r )+\om)\nonumber\\
\qquad  \qquad -\si(r, z_2(\rho), \Ga_2(\rho)+\om)+\si(r, z_2(\rho), \Ga_2(r)+\om)\,;\nonumber\\
 J_5
= \si(r, z_2(\rho), \Ga_2(\rho  )+\om)-\si(\rho, z_2(\rho), \Ga_2(\rho  )+\om)\nonumber\\
\qquad  \qquad  + \si(\rho, z_1(\rho), \Ga_1(\rho  )+\om)-\si(r, z_1(\rho), \Ga_1(\rho  )+\om)\,.  \nonumber 
\end{cases}
\]
From Lemma \ref{l.one-comp-holder}, we see that 
\begin{equation}
|J_1|\le \kappa \left[ \|z_2-z_1\|_{ \rho, r,  \be}  +\left(\|z_1\|_{\rho,r, \be}+
\|z_2\|_{\rho,r, \be}\right)\|z_2-z_1\|_{\rho, r}\right]  (r-\rho)^\be
\label{e.proof-j1}
\end{equation}
and 
\begin{equation}
\|J_2\|\le \kappa  \left[ \|  \Ga _2-  \Ga_1\|_{ \rho, r, \be }  +\left(\| \Ga_1\|_{\rho,r, \be}+
\|  \Ga_2\|_{\rho,r, \be }\right)\|\Ga_2-\Ga_1\|_{\rho, r, \be }\right]  (r-\rho)^\be \,. 
\label{e.proof-j2}
\end{equation}
Use the mean value theorem to obtain
\begin{eqnarray*}
J_3&=&\int_0^1\int_0^1 \DD \tilde \si( (1-\upsilon)z_1(\rho)+\upsilon 
 z_1(r), (1-\th)\Ga_1(\rho)+\th  \Ga_2(\rho)) d\th d\upsilon\nonumber  \\
&&\qquad  (z_1(r)-z_1(\rho)) (\Ga_2(\rho)-\Ga_1(\rho))\,. 
\end{eqnarray*}
This shows 
\begin{equation}
|J_3|\le  \kappa \|z_1\|_{\rho, r, \be}\|  \Ga_2-  \Ga_1\|_{\rho, r}  (r-\rho)^\be   \,. 
\label{e.proof-j3}
\end{equation}
In a similar way we can obtain
\begin{equation}
|J_4|\le  \kappa \|\dot \Ga_2\|_{ \rho, r }\|  z_2-  z_1\|_{\rho, r}  (r-\rho)    \,. 
\label{e.proof-j4}
\end{equation}
From the assumption on $\si$  it is to verify that  
\begin{equation}
|J_5|\le  \kappa   (r-\rho)  \left[ \|z_2-z_1\|_{ \rho, r}+\|\Ga_2-\Ga_1\|_{\rho, r}\right]   \,. 
\label{e.proof-j5}
\end{equation}
Combining \eqref{e.proof-j1}-\eqref{e.proof-j5}, we have
\begin{eqnarray}
&& \left|\si^{(2)}(r)-\si^{(1)}(r)-\si^{(2)}(\rho)+\si^{(1)}(\rho)\right|
\nonumber\\
&&\qquad \le \kappa 
  \bigg[\|z_2-z_1\|_{a, b, \be} +\|  \Ga_2-  \Ga_1\|_{a, b,\be }\nonumber\\
&&\qquad +\left(1+\|z_1\|_{a, b, \be}+
\|z_2\|_{a, b, \be}+\| \Ga_1\|_{a,b, \be }+\| \Ga_2\|_{a,b, \be }\right)      \|z_2-z_1\|_{a,b}\nonumber\\ 
&&\qquad +\left(1+\|z_1\|_{a, b, \be}+ \|z_2\|_{a, b, \be}+
\|  \Ga_1\|_{a, b, \be  }\right. \nonumber\\
&&\qquad \left. +\| \Ga_2\|_{a,b, \be }\right)      \|\Ga_2-\Ga_1\|_{a,b}\bigg]
(r-\rho)^\be \,.  
\end{eqnarray}
Substituting the above inequality into \eqref{e.def-i2}  yields 
\begin{eqnarray}
&&I_2 \le \kappa 
  \bigg[\|z_2-z_1\|_{a, b, \be} +\|  \Ga_2-  \Ga_1\|_{a, b, \be}\nonumber\\
&&\qquad +\left(1+\|z_1\|_{a, b, \be}+
\|z_2\|_{a, b, \be}+
\| \Ga_1\|_{a, b, \be } +\|  \Ga_2\|_{a,b, \be }\right)      \|z_2-z_1\|_{a,b}\nonumber\\ 
&&\qquad +\left(1+\|z_1\|_{a, b, \be}+\|z_2\|_{a, b, \be}+
\| \Ga_1\|_{a, b, \be }+\|  \Ga_2\|_{a,b, \be}\right)      \|\Ga_2-\Ga_1\|_{a,b}\bigg]
(b-a)^{2\be}  \,. \nonumber \\
\end{eqnarray}
Substituting the bounds for $I_1$ and $I_2$ into \eqref{e.proof-bound-by-i1-i2}
we have 
\begin{eqnarray}
&&\|F_{22}^{(2)}-F_{22}^{(1)}\|_{a,b,\be} 
\le    \kappa 
 \|B\|_{a,b, \be} \bigg[\|z_2-z_1\|_{a, b, \be} +\|  \Ga_2-  \Ga_1\|_{a, b, \be}\nonumber\\
&&\qquad +\left(1+\|z_1\|_{a, b, \be}+
\|z_2\|_{a, b, \be}+\|\Ga_1\|_{a,b, \be} +\|\Ga_2\|_{a,b, \be}\right)      \|z_2-z_1\|_{a,b}\nonumber\\ 
&&\qquad +\left(1+\|z_1\|_{a, b, \be}+\|z_2\|_{a, b, \be}+
\| \Ga_1\|_{a, b,\be }+\|\Ga_2\|_{a,b, \be}\right)      \|\Ga_2-\Ga_1\|_{a,b}\bigg]
(b-a)^{\be} \nonumber \\ 
&&\le    \kappa 
  \|B\|_{a,b, \be} \left(1+\|z_1\|_{a, b, \be}+
\|z_2\|_{a, b, \be}+ \|\Ga_1\|_{a,b, \be}+\|\Ga_2\|_{a,b, \be}\right) 
\nonumber\\
&&\qquad \bigg[ |z_2(a)-z_1(a)|+\|\Ga_2(a)-\Ga_1(a)\|  \nonumber\\
&&\qquad +\left[ \|z_2-z_1\|_{a, b, \be} +\|  \Ga_2-  \Ga_1\|_{a, b, \be}\right]    
(b-a)^{\be} \bigg]   \,.  
\label{e.holder-f22}
\end{eqnarray}
Finally,  we turn to bound $\|F_{23}^{2}-F_{23}^{1}\|_{a, b, \be} $.  
We have 
\begin{eqnarray*}
&&\left| F_{23}^{2}(b)-F_{23}^{1}(b)-F_{23}^{2}(a)+F_{23}^{1}(a)\right|\\
&&\quad \le \int_a^b \int_0^s \left|\si(s, z_2(s), \Ga_2(s)) -
\si(s, z_1(s), \Ga_1(s))  \right| |\tilde \si(u, z_2(u), \Ga_2(u))|duds\\
&&\qquad+\int_a^b \int_0^s \left|\tilde \si(u, z_2(u), \Ga_2(u)) -
\tilde \si(u, z_1(u), \Ga_1(u))  \right| |\si (s, z_1(s), \Ga_1(s))|duds\\
&&\quad \le \kappa (b-a) 
\left\{ \|z_2-z_1\|_{a, b} +\|\Ga_2-\Ga_1\|_{a, b} 
 +\|z_1\|_{a, b}\left[\|z_2-z_1\|_{0, b}+\|
 \Ga_2-\Ga_1\|_{0, b}\right]\right\}
 \,. 
\end{eqnarray*}
This means 
\begin{eqnarray}
&&\left\| F_{23}^{2} -F_{23}^{1}
\right\|_{a, b, \be}   \le    \kappa     (b-a) ^{1-\be} 
\big\{ \|z_2-z_1\|_{a, b} +\|\Ga_2-\Ga_1\|_{a, b} \nonumber \\
&&\qquad 
 +\|z_1\|_{a, b}\left[\|z_2-z_1\|_{0, b}+\|
 \Ga_2-\Ga_1\|_{0, b}\right]\big\}\nonumber\\
 &&\qquad \le    \kappa
 (1 +\|z_1\|_{a, b}) \left[\|z_2-z_1\|_{0, b}+\|
 \Ga_2-\Ga_1\|_{0, b}\right] (b-a) ^{1-\be} \nonumber\\
 &&\qquad \le   \kappa  (b-a) ^{1-\be} 
 (1 +\|z_1\|_{a, b}) \bigg[|z_2 -z_1 |_{0,a}+ \|
 \Ga_2 -\Ga_1 \| _{0, a} \nonumber\\
 &&\qquad +\left[
 \|z_2-z_1\|_{a, b, \be }+\|
 \Ga_2-\Ga_1\|_{a, b, \be }\right](b-a)^\be \bigg]\, . 
 \label{e.holder-f23}
\end{eqnarray} 
Combining  \eqref{e.holder-f21},  \eqref{e.holder-f22}, and 
\eqref{e.holder-f23},   we   prove \eqref{e.holder-f2}. 
\end{proof}

Now we are ready to prove  one of  our main theorems of this section.
\begin{theorem}\label{t.5.7}  Let $T\in (0, \infty)$ be any given  number. 
Assume the  hypothesis \ref{h.b-sigma}. 
Then,   the  equation \eqref{e.5.6} has a unique solution.  Moreover,  there is a $\tau_0>0$ such that 
the solution satisfies 
\begin{eqnarray}
\sup_{0\le t\le T}|z(t)|
&\le&  c_2 \exp\left\{ c_1 \|B\|_{0, T, \be}^{1/\be}\ 
\right\} \label{e.final-uniform-bound-solution} \\
\sup_{0\le a< b\le T, b-a\le \tau_0}|z |_{a, b, \be} 
&\le & c_2 \exp\left\{ c_1 \|B\|_{0, T, \be}^{1/\be} 
\right\}\,\label{e.final-holder-bound-solution} 
\end{eqnarray}
for some constants $c_1$ and $c_2$ dependent only on 
$\be,     p, T$ and the bound $\cL$  for the coefficients $b$ and $\si$.  
\end{theorem}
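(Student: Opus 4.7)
The plan is to realize the coupled characteristic system \eqref{e.5.6}, after the translation \eqref{e.to-solve} that replaces $\Ga$ by $\Ga+\om$, as a pathwise fixed-point equation $x(t)=F(t,x)$ on $\BB[0,T]$, with $x=(\Ga,z)\in \HH\oplus\RR$ and $F=(F_1,F_2)$ the operator defined in \eqref{e.def-F}, and then to apply Theorem~\ref{t.fix-point-theorem} for each fixed $\om\in\Om$. The operator $F$ is manifestly progressive, since both $F_1(t,\cdot)$ and $F_2(t,\cdot)$ are built from integrals over $[0,t]$, and $F(0,\Ga,z)=(0,\eta(\om))$ is independent of $(\Ga,z)$, so the standing hypothesis of Theorem~\ref{t.fix-point-theorem} holds with $\|F(0)\|=|\eta(\om)|$.

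For hypothesis (i) of Theorem~\ref{t.fix-point-theorem}, I combine Lemmas~\ref{l.uniform-bound-f1} and \ref{l.uniform-bound-f2}. The former gives $\|\tfrac{d}{dt}F_1(\Ga,z)\|_\HH\le\kappa_0$, whence $\|F_1(\Ga,z)\|_{a,b,\be}\le \kappa_0 T^{1-\be}$, trivially dominated by the required form. The latter puts $F_2$ directly into the template with $\ga=\be$ and effective constant
\[
\kappa:=\kappa_0(1+\|B\|_{0,T,\be}),
\]
which is pathwise finite. For hypothesis (ii), Lemma~\ref{l.F-diff-holder-norm-bound} is tailored to \eqref{e.F-diff-holder-bound}: the $F_1$ estimate \eqref{e.holder-f1} integrates to $\|F_1^{(2)}-F_1^{(1)}\|_{a,b,\be}\le \kappa_0 T^{1-\be}(\|z_2-z_1\|_{0,b}+\|\Ga_2-\Ga_1\|_{0,b})$, and the splitting $\|\cdot\|_{0,b}\le \|\cdot\|_{0,a}+\|\cdot\|_{a,b,\be}(b-a)^\be$ gives the required form with constant prefactor; \eqref{e.holder-f2} puts $F_2$ in the template form with prefactor
\[
\bar h=\kappa_0(1+\|B\|_{0,T,\be})\bigl(1+\|z_1\|_{0,a}+\|z_2\|_{0,a}+\|z_1\|_{a,b,\be}+\|z_2\|_{a,b,\be}+\|\Ga_1\|_{a,b,\be}+\|\Ga_2\|_{a,b,\be}\bigr),
\]
which is increasing in each of the four groups of arguments demanded by $h$ in \eqref{e.def-barh}.

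Theorem~\ref{t.fix-point-theorem} then produces a unique $(\Ga,z)\in\BB^\be[0,T]$ solving \eqref{e.to-solve}, hence a unique solution of \eqref{e.5.6}, together with the bounds \eqref{e.bound-holder-x-theorem}. Substituting $\kappa=\kappa_0(1+\|B\|_{0,T,\be})$ and $\ga=\be$ and using the elementary inequality $(1+y)^{1/\be}\le 2^{1/\be}(1+y^{1/\be})$ for $y\ge 0$ and $\be\in(0,1]$, the factor $e^{c_1\kappa^{1/\ga}T}$ is dominated by $C_1 e^{C_2\|B\|_{0,T,\be}^{1/\be}}$ with $C_1,C_2$ depending only on $\be,p,T,\cL$. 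Absorbing $|\eta(\om)|$ and the deterministic constants into $c_1,c_2$ yields \eqref{e.final-uniform-bound-solution} and \eqref{e.final-holder-bound-solution}.

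The substantive technical work has already been done in Lemmas~\ref{l.uniform-bound-f1}--\ref{l.F-diff-holder-norm-bound}: the main obstacle there is the Young-integral estimate for the $F_{22}$ component of $F_2$, which must be linear in $\|B\|_{0,T,\be}$ and simultaneously produce the coupled $(\Ga,z)$ cross-terms, and the five-way decomposition $J_1+\cdots+J_5$ in the proof of \eqref{e.holder-f2} is precisely the device that handles it. The invocation of Theorem~\ref{t.fix-point-theorem} itself is bookkeeping; the only delicate point is to recognize $\bar h$ as an increasing function of its four arguments, so that the fractional Picard iteration mechanism is applicable and the pathwise solution is obtained without any recourse to Gr\"onwall's lemma.
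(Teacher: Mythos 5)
Your proposal is correct and follows essentially the same route as the paper: verify conditions (i) and (ii) of Theorem \ref{t.fix-point-theorem} for $F=(F_1,F_2)$ on $\BB[0,T]=(\HH\oplus\RR)[0,T]$ via Lemmas \ref{l.uniform-bound-f1}, \ref{l.uniform-bound-f2} and \ref{l.F-diff-holder-norm-bound}, with $\kappa$ proportional to $1+\|B\|_{0,T,\be}$ and $\ga=\be$, then read off the bounds from \eqref{e.bound-holder-x-theorem}. Your extra remarks on splitting $\|\cdot\|_{0,b}\le\|\cdot\|_{0,a}+\|\cdot\|_{a,b,\be}(b-a)^\be$ for the $F_1$ difference and on converting $e^{c_1\kappa^{1/\be}T}$ into $c_2\exp\{c_1\|B\|_{0,T,\be}^{1/\be}\}$ only make explicit what the paper leaves implicit.
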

\begin{proof} Let $\BB=\HH\oplus \RR$. Then $F=(F_1,   F_2)$ defined by 
\eqref{e.def-F} is a mapping from (some domain of) $\BB[0, T]$ to $\BB[0, T]$.  
The inequality \eqref{e.5.17} implies
that  there is a constant $c$ depending only on $\be,     p, T$ and the bound $\cL$  for the coefficients $b$ and $\si$ such that 
\[
\|F_1\|_{a, b, \be}\le c
\]
for any $a, b\in [0, T]$.  This together with \eqref{e.sup-f2}  implies that 
$F=(F_1, F_2)$ satisfies the condition (i)  of Theorem \ref{t.fix-point-theorem}
with $\kappa$ there being replaced by $c   \left(1+ \|B\|_{a, b, \be}\right)$. 
Lemma \ref{l.F-diff-holder-norm-bound} implies that 
$F=(F_1, F_2)$ satisfies the condition (ii) of Theorem 
\ref{t.fix-point-theorem}
with  the function $\bar h$ being  given by 
\begin{eqnarray*}
\bar h&=&\bar h((\Ga_1,z_1), (\Ga_2,z_2))
=
\cL (1+\|B\|_{a,b, \be}) \big(1+\|z_1\|_{0, a}+\|z_2\|_{0,a}\\
&&\qquad +\|z_1\|_{a, b, \be}  +
\|z_2\|_{a, b, \be} + \|\Ga_1\|_{a,b, \be}+\|\Ga_2\|_{a,b, \be}\big) \,.
\end{eqnarray*}
Thus,   we can apply Theorem \ref{t.fix-point-theorem} to prove that  there is a $x
\in \BB[0, T]$ satisfies the equation  $x(t)=F(t, x )$.  This means  that  $x$ satisfies the equation
\eqref{e.to-solve}, hence it satisfies \eqref{e.5.6}.    The bounds 
\eqref{e.final-uniform-bound-solution}  and 
\eqref{e.final-holder-bound-solution}  are   immediate consequence of 
\eqref{e.bound-holder-x-theorem}. 
\end{proof}

\subsection{Solution to the original equation}
We need the following lemmas in the proof of the existence and uniqueness theorem
for equation \eqref{e.5.6}.  
\begin{lemma}\label{l.diff-of-La}  Let  $\Ga :\cT\times \Om\rightarrow \Om$ be continuously 
differentiable in $t$ and $\HH$-differentiable in $\om$.  If $\Ga(t):\Om\rightarrow \Om$ has an inverse
$\La(t)$ and if $\La(t)$ is differentiable in $t$ in the Hilbert space
$\HH$,   then 
\begin{equation}
\frac{\partial \La}{\partial t}(t, \om)
=-(\DD \La)(t, \om)\frac{\partial
\Ga}{\partial t}(t, \La(t, \om))\,.
\end{equation}
\end{lemma}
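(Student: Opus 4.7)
The plan is to start from the defining identity $\Gamma(t,\Lambda(t,\omega))=\omega$, which holds by the assumed inverse relation, and differentiate it once in $t$ and once in $\omega$.

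First, I would fix $\omega\in\Omega$ and differentiate $\Gamma(t,\Lambda(t,\omega))=\omega$ with respect to $t$. The right-hand side vanishes. On the left-hand side, $\Gamma$ depends on $t$ in two ways: explicitly (through the first slot) and implicitly through $\Lambda(t,\omega)$ in the second slot. By the chain rule, the $t$-derivative is
\begin{equation*}
\frac{\partial\Gamma}{\partial t}(t,\Lambda(t,\omega))+(\mathcal{D}\Gamma)(t,\Lambda(t,\omega))\,\frac{\partial\Lambda}{\partial t}(t,\omega)=0.
\end{equation*}
Here the hypotheses that $\Gamma$ is continuously differentiable in $t$ and $\mathcal{H}$-differentiable in $\omega$, and that $\Lambda(t,\cdot)$ is differentiable in $t$ with values in $\mathcal{H}$, are exactly what is needed to invoke the chain rule for $\mathcal{H}$-differentiable maps recalled in Section 2 of the paper.

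Next, holding $t$ fixed, I would apply the operator $\mathcal{D}$ in $\omega$ to the same identity $\Gamma(t,\Lambda(t,\omega))=\omega$. The $\mathcal{H}$-derivative of the identity map on $\Omega$ (restricted to $\mathcal{H}$) is $I_{\mathcal{H}}$, so the chain rule gives
\begin{equation*}
(\mathcal{D}\Gamma)(t,\Lambda(t,\omega))\circ(\mathcal{D}\Lambda)(t,\omega)=I_{\mathcal{H}}.
\end{equation*}
This identifies $(\mathcal{D}\Lambda)(t,\omega)$ as a right inverse of $(\mathcal{D}\Gamma)(t,\Lambda(t,\omega))$ on $\mathcal{H}$. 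Substituting this into the previous relation and moving the $t$-derivative of $\Gamma$ to the right yields
\begin{equation*}
\frac{\partial\Lambda}{\partial t}(t,\omega)=-(\mathcal{D}\Lambda)(t,\omega)\,\frac{\partial\Gamma}{\partial t}(t,\Lambda(t,\omega)),
\end{equation*}
which is the claimed formula.

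The only genuinely delicate point is making sure the chain rule in $t$ is legitimate in this infinite-dimensional setting: one needs $\partial_t\Lambda(t,\omega)\in\mathcal{H}$ (assumed) so that the directional derivative $(\mathcal{D}\Gamma)(t,\Lambda)\cdot\partial_t\Lambda$ is defined, and one needs the joint $(t,\omega)$-regularity of $\Gamma$ to justify splitting the $t$-derivative of $\Gamma(t,\Lambda(t,\omega))$ into the explicit and implicit parts. Both are built into the hypotheses, so no additional work is required beyond quoting the chain rule from Section 2; the rest of the proof is simply the two differentiations above and a substitution.
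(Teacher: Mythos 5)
Your proof is correct and follows essentially the same route as the paper: differentiate the identity $\Ga(t,\La(t,\om))=\om$ in $\om$ to identify $(\DD\La)(t,\om)$ as the inverse of $(\DD\Ga)(t,\La(t,\om))$, then differentiate in $t$ via the chain rule and substitute. No gaps.
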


\begin{proof} Since $\La(t)$ is the inverse of $\Ga(t)$ we have
\[
\Ga(t, \La(t, \om))=\om\,, \quad  \forall   \ \om \in \Om\,.
\]
Differentiating  both sides with respect to $\om $, we have
\[
(\DD  \Ga)(t, \La(t, \om))(\DD  \La)(t, \om)=I\,,
\]
where $I$ is an identity operator from $\HH$ to $\HH$.
Therefore, we obtain
\begin{equation}
\left[(\DD  \Ga)(t, \La(t, \om))\right]^{-1}=(\DD  \La)(t, \om)\,.
\label{e.proof-nabla-G}
\end{equation}
On the other hand, differentiating $\Ga(t, \La(t, \om))=\om$ with respect to $t$, we
have
\[
\frac{\partial \Ga}{\partial t}(t, \om)
+(\DD  \Ga)(t, \La(t)(\om))\frac{\partial \La}{\partial t}(t, \om)=0\,.
\]
Thus
\begin{equation}
\frac{\partial \La}{\partial t}(t, \om)
=-\left[(\DD  \Ga)(t, \La(t, \om))\right]^{-1}\frac{\partial
\Ga}{\partial t}(t, \La(t, \om))\,.
\label{e.proof-diff-La} 
\end{equation}
 
Combining   \eref{e.proof-nabla-G}  and \eref{e.proof-diff-La}    we have
\begin{equation}
\frac{\partial \La}{\partial t}(t, \om)
=-(\DD  \La)(t, \om)\frac{\partial
\Ga}{\partial t}(t, \La(t, \om))\,.
\end{equation}
This proves the lemma. 
\end{proof}

\begin{lemma}\label{l.reduced-to-original}
Let $\tau\in (0, T]$ be a positive number.  
Assume that   $\Ga(t):\Om\rightarrow \Om$ defined by \eqref{e.5.6} has an inverse
  $\La(t)$ for all $t\in [0, \tau]$  and assume that $\La(t)$ is differentiable in $t\in [0, \tau]$ in the Hilbert space
$\HH$.  Let $z$ be defined  by  \eqref{e.5.6}.    Then $x(t)=z(t, \La(t)), t\in [0, \tau]$
satisfies the equation \eqref{e.5.5}.  
\end{lemma}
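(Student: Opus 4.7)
My plan is to apply the Itô formula of Theorem \ref{t.3.2} to the composition $z(t, \La(t, \om))$, by taking $F(t, x, \om') := z(t, \om')$ (independent of the dummy variable $x$), trivial $G \equiv 0$ with $g_0 = g_1 = 0$, and using $\La$ in the role of the path transform $\Ga$ appearing in Theorem \ref{t.3.2}. To put $\La$ in the required form, I will use the first equation of \eqref{e.5.6} to write $\Ga(t, \tilde\om) = \tilde\om + \int_0^\cdot h_\Ga(t, r, \tilde\om)\, dr$ with $h_\Ga(t, r, \tilde\om) := \int_0^t \tilde\si(s, z(s, \tilde\om), \Ga(s, \tilde\om))\, \phi(s, r)\, ds$, and then substitute $\tilde\om = \La(t, \om)$ together with the identity $\Ga(t, \La(t, \om)) = \om$. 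This gives $\La(t, \om) = \om + \int_0^\cdot h_\La(t, r, \om)\, dr$ with $h_\La(t, r, \om) = -h_\Ga(t, r, \La(t, \om))$, and in particular $\La(0, \om) = \om$, so the base case of Theorem \ref{t.3.2} is compatible.

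With the second equation of \eqref{e.5.6} providing $f_0(s, x, \om') = \tilde b(s, z(s, \om'), \Ga(s, \om')) + \int_0^s \si(s, z, \Ga)\, \tilde\si(u, z, \Ga)\, \phi(s, u)\, du$ and $f_1(s, x, \om') = \si(s, z(s, \om'), \Ga(s, \om'))$, Theorem \ref{t.3.2} yields
\[
x(t, \om) = \eta(\om) + \int_0^t f_0(s, \La(s, \om))\, ds + \int_0^t f_1(s, \La(s, \om))\, \de B(s) + \int_0^t f_1(s, \La(s, \om))\, h_\La(s, s, \om)\, ds + \int_0^t (\DD F)(s, \La(s, \om))\, \tfrac{d}{ds}\La(s, \om)\, ds.
\]
The inverse identities $\Ga(s, \La(s, \om)) = \om$ and $z(s, \La(s, \om)) = x(s, \om)$ reduce $f_1(s, \La(s, \om))$ to $\si(s, x(s), \om)$ and the $\tilde b$-piece of $f_0$ to $\tilde b(s, x(s), \om)$. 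The remaining $\int_0^s \si \tilde\si \phi\, du$ piece of $f_0$ equals $\si(s, x, \om)\, h_\Ga(s, s, \La(s, \om)) = -\si(s, x, \om)\, h_\La(s, s, \om)$ by the symmetry $\phi(s, u) = \phi(u, s)$, and thus cancels exactly with the $\int_0^t f_1 h_\La\, ds$ term.

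It remains to handle the Malliavin term. By Lemma \ref{l.diff-of-La}, $\tfrac{d}{ds}\La(s, \om) = -(\DD\La)(s, \om)\, \tfrac{d}{ds}\Ga(s, \La(s, \om)) = -\tilde\si(s, x, \om)\, (\DD\La)(s, \om)\bigl[\int_0^\cdot \phi(s, r)\, dr\bigr]$, where the $t$-derivative of $\Ga$ is read off from \eqref{e.5.6}. The chain rule applied to $x = z(\cdot, \La(\cdot))$ gives $\DD x(s, \om) = (\DD z)(s, \La(s, \om))\, (\DD\La)(s, \om)$ as operators on $\HH$, whence
\[
(\DD F)(s, \La(s, \om))\, \tfrac{d}{ds}\La(s, \om) = -\tilde\si(s, x, \om)\, \DD x(s, \om)\bigl[\textstyle\int_0^\cdot \phi(s, r)\, dr\bigr] = -\tilde\si(s, x, \om)\, \DD^\phi_s x(s, \om)
\]
by definition \eqref{e.def-Dphi} (recalling that pairing the Malliavin gradient against $\int_0^\cdot \phi(s, r)\,dr\in\HH$ reproduces $\DD^\phi_s$). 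Assembling all pieces gives exactly \eqref{e.5.5}.

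The main obstacle is to verify that the regularity hypotheses of Theorem \ref{t.3.2} are satisfied in this application—H\"older continuity in $t$ and Lipschitz/$\HH$-differentiability of $F(t, x, \om') = z(t, \om')$ in $(x, \om')$, together with continuous $\HH$-differentiability of $\La$ in $t$. These follow from Hypothesis \ref{h.b-sigma}, the H\"older estimates for $z$ and $\Ga$ coming from Theorem \ref{t.5.7}, and the standing assumption of the lemma that $\La(t, \om)$ is differentiable in $t$ within $\HH$.
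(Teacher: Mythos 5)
Your proposal is correct and follows essentially the same route as the paper's own proof: both apply the Itô formula of Theorem \ref{t.3.2} to $z(t,\La(t))$ after expressing $\La(t)=\om+\int_0^\cdot h(t,u,\om)\,du$ via the inverse relation, cancel the $\int_0^s\si\tilde\si\phi\,du$ piece of $f_0$ against the $f_1 h$ term using the symmetry $\phi(s,u)=\phi(u,s)$, and evaluate the remaining Malliavin term through Lemma \ref{l.diff-of-La} and the chain rule to obtain $-\tilde\si(s,x(s))\DD^\phi_s x(s)$. The only cosmetic difference is that you make the specialization $G\equiv 0$ and the role reversal $\Ga\leadsto\La$ explicit, which the paper leaves implicit.
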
 
\begin{proof}
If $\Ga(t)$ defined by \eqref{e.5.6} has inverse $\La(t)$,
then 
\begin{eqnarray*}
\om&=&
\Gamma(t, \La(t))=\La(t)+\int_0^t  \tilde \si (s, z(s),
 \Gamma(s))\int_0^\cdot \phi(s,
u)du ds\Big|_{\om=\La(t)}\,. 
\end{eqnarray*}
 Or
\begin{equation} 
\begin{cases}
 \La(t) =\om+\int_0^\cdot  h(t,u,\om) du\qquad \quad \hbox{with} \\
 h(t,u, \om)
=- \int_0^t  \tilde \si (s, z(s),
 \Gamma(s))  \phi(s,
u)  ds\Big|_{\om=\La(t)}\,.  
\end{cases}
\label{e.h-for-La}
\end{equation}
On the other hand, from \eqref{e.5.6}  we   have
\begin{eqnarray}
\frac{d}{dt}\Ga(t,\om)\big|_{\om=\La(t)}
&=& \tilde \si(t, z(t), \Ga(t))\int_0^\cdot \phi(t, u) du\big|_{\om=\La(t)}\nonumber\\
&=& \tilde \si(t, x(t,\om), \om)\int_0^\cdot \phi(t, u) du \,. \label{e.diff-Ga-at-La} 
\end{eqnarray}

We apply the It\^o     formula \eqref{e.ito-formula} to
$z(t, \La(t))$ with
\begin{eqnarray*}
f_0&=& \tilde b(s, z(s), \Gamma(s)) 
 +   \si(s, z(s), \Ga(s)) \int_0^s \tilde \si(u, z(u),
\Ga(u)) \phi(s, u) du \\
f_1&=&   \si(s, z(s),
\Gamma(s))\ 
\end{eqnarray*}
and with $h$ defined by \eqref{e.h-for-La}. We shall use $\si(s, x(s))$ to 
denote $\si(s, x(s, \om), \om)$ etc. 
Noticing 
$z(s)\big|_{\om=\La(s)}=x(s)$, we have
\begin{eqnarray}
x(t)
&=& z(t, \La(t))\nonumber\\
&=& \eta(\om)
 +   \int_0^t  \left(\int_0^s \tilde \si(u, z(u),
\Ga(u)) \phi(s, u) du \right) \Big|_{\om=\La(s)} \si(s, x(s) )  ds\nonumber\\
&&\quad +\int_0^t \tilde b(s, x(s) ) ds + \int_0^t \si (s, x(s))\de B(s) \nonumber\\
&&\quad +\int_0^t  \DD   z(s, 
\La(s))\frac{d}{ds}\La(s) ds +\int_0^t \si(s, x(s)) h(s,s,\om) ds\nonumber\\
&=&\eta(\om)+I_1+I_2+I_3+I_4+I_5 \,. \label{e.i1-4-for-x}
\end{eqnarray}
From \eqref{e.h-for-La} and since $\phi(s,u)=\phi(u,s)$, we see 
that 
\begin{equation}
I_1+I_5=0\,.\label{e.i1-i4}
\end{equation}  
By Lemma \ref{l.diff-of-La} and then by \eqref{e.diff-Ga-at-La},  we have
\begin{eqnarray*}
I_4
&=&\int_0^t (\DD  z)(s, \La(s))\frac{\partial }{\partial s} \La(s) ds\\
&=& -\int_0^t (\DD  z)(s, \La(s)) (\DD  \La)(s)\left(\frac{\partial }{
\partial s}\Ga\right)
(s, \La(s))ds\\ 
&=& -\int_0^t  \left[ \DD  x (s)\right] \tilde \si(s , x(s))   \int_0^\cdot \phi(s,u) du  ds \,.
 \end{eqnarray*}
This yields
\begin{equation}
I_4=-\int_0^t  \tilde \si(s , x(s)) \DD^\phi_s x(s) ds\,.\label{e.i3}  
\end{equation}
Substituting \eqref{e.i1-i4} and  \eqref{e.i3} into \eqref{e.i1-4-for-x} we have 
\begin{eqnarray}
x(t)
&=&\eta(\om)
 +     \int_0^t \tilde b(s, x(s) ) ds + \int_0^t \si(s, x(s))\de B(s)\nonumber\\
 &&\qquad 
 -\int_0^t  \tilde \si(s , x(s)) \DD^\phi_s x(s) ds\,. 
\end{eqnarray}
This is the lemma.  
\end{proof}
Now we show that there is a positive $\tau$ such that $\Ga(t)$ has inverse 
$\La(t)$.  Before we continue, we need the following simple inequality. 
\begin{lemma}Assume that $
B:[0, T]\rightarrow \RR$ is a H\"older continuous function of exponent 
$\be \in (0,1)$.    
Let $\BB_1 $ and $\BB_2$ be two  Banach spaces and let
$f:[0, T]\rightarrow \BB_1 $ and $g:[0, T]\rightarrow \BB_2$ be two 
H\"older continuous functions with exponent $\al  \in (  1-\be, 1)$. 
Then 
\begin{eqnarray}
&&\|\int_0^\cdot f(s)\otimes g(s) dB(s)\|_{a, b, \be}
 \le   \kappa \|B\|_{a, b, \be}   
\bigg\{ \|f\|_{a, b}\|g\|_{a,b}  \nonumber \\
&&\qquad\qquad\qquad    +
\left[\|f\|_{a, b}\|g\|_{a, b, \be}+\|g\|_{a, b}\|f\|_{a, b, \be}\right] (b-a)^{\be }
\bigg\} \,.
\label{e.product-holder-bound}
\end{eqnarray}
\end{lemma}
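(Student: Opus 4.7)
The plan is to realise the Young integral through the fractional integration by parts formula of Proposition \ref{p.integration-by-parts}, and then estimate each of the two resulting Weyl fractional derivatives separately. Fix $a\le s<r\le b$ and select a parameter $\lambda\in(1-\be,\be)$, which is possible because the paper works under the standing assumption $\be>1/2$. Writing $I(t):=\int_0^t f(u)\otimes g(u)\,dB(u)$ and $B_{r-}(u):=B(u)-B(r)$, Proposition \ref{p.integration-by-parts} yields
\begin{equation*}
I(r)-I(s)=(-1)^\lambda\int_s^r D_{s+}^\lambda[f\otimes g](u)\,D_{r-}^{1-\lambda}B_{r-}(u)\,du.
\end{equation*}

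The next step is to bound each factor. For the $B$-factor, inserting $|B_{r-}(u)|\le\|B\|_{a,b,\be}(r-u)^\be$ and $|B_{r-}(u)-B_{r-}(v)|\le\|B\|_{a,b,\be}(v-u)^\be$ into the defining formula \eqref{e.weyl-der-right} and performing the standard computation gives
\begin{equation*}
\bigl|D_{r-}^{1-\lambda}B_{r-}(u)\bigr|\le \kappa\,\|B\|_{a,b,\be}\,(r-u)^{\lambda+\be-1}.
\end{equation*}
For the $(f\otimes g)$-factor, the key device is the Leibniz-type splitting
$f(u)g(u)-f(v)g(v)=(f(u)-f(v))g(u)+f(v)(g(u)-g(v))$,
which, substituted into the integral term of \eqref{e.weyl-der-left} and combined with the trivial bound $\|f(u)\otimes g(u)\|\le\|f\|_{a,b}\|g\|_{a,b}$ for the non-integral term, yields
\begin{equation*}
\bigl\|D_{s+}^\lambda[f\otimes g](u)\bigr\|\le \kappa\Bigl(\frac{\|f\|_{a,b}\|g\|_{a,b}}{(u-s)^\lambda}+\bigl[\|f\|_{a,b}\|g\|_{a,b,\be}+\|g\|_{a,b}\|f\|_{a,b,\be}\bigr](u-s)^{\be-\lambda}\Bigr).
\end{equation*}

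Multiplying the two estimates and integrating over $[s,r]$ produces two Beta-type integrals, $\int_s^r(r-u)^{\lambda+\be-1}(u-s)^{-\lambda}du$ and $\int_s^r(r-u)^{\lambda+\be-1}(u-s)^{\be-\lambda}du$, both convergent precisely thanks to the choice $\lambda\in(1-\be,\be)$, and evaluating to constants times $(r-s)^\be$ and $(r-s)^{2\be}$ respectively. This gives
\begin{equation*}
\|I(r)-I(s)\|\le \kappa\|B\|_{a,b,\be}\Bigl\{\|f\|_{a,b}\|g\|_{a,b}(r-s)^\be+\bigl[\|f\|_{a,b}\|g\|_{a,b,\be}+\|g\|_{a,b}\|f\|_{a,b,\be}\bigr](r-s)^{2\be}\Bigr\},
\end{equation*}
and dividing through by $(r-s)^\be$ while absorbing $(r-s)^\be\le(b-a)^\be$ in the second summand produces exactly \eqref{e.product-holder-bound}. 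The only genuinely delicate point, beyond routine fractional calculus bookkeeping, is ensuring that the Leibniz splitting and associated norm inequalities are meaningful in the tensor product $\BB_1\otimes\BB_2$ — i.e.\ selecting a compatible cross-norm so that $\|f(v)\otimes(g(u)-g(v))\|\le\|f(v)\|\,\|g(u)-g(v)\|$; with that convention fixed the rest of the argument is a direct computation.
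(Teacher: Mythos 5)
Your argument is correct and is essentially the same as the paper's: both pass through the fractional integration by parts formula of Proposition \ref{p.integration-by-parts}, bound $D^{1-\la}_{r-}B_{r-}$ by $\kappa\|B\|_{a,b,\be}(r-u)^{\la+\be-1}$, bound the Weyl derivative of $f\otimes g$ via the Leibniz splitting into the two stated terms, and finish with the two Beta-type integrals. Your choice $\la\in(1-\be,\be)$ (rather than the paper's $\la\in(1-\be,\al)$) is in fact the one consistent with the $\be$-H\"older norms appearing in the conclusion, and your remark on fixing a compatible cross-norm on $\BB_1\otimes\BB_2$ matches the paper's reference to \cite{hugaussian} for that point.
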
 
\begin{proof} 
We refer to \cite{hugaussian},  and in particular, the references therein
for the tensor product.  Since $\al+\be>1$, we can choose a $\la$ such that
$\la<\al$ and $1-\la<\be$.  
For any $a, b\in [0, T]$,  we have
\begin{eqnarray*}
\left\|\int_a^b f(s)\otimes g(s) dB(s) \right\|
&=&  \left\| \int_a^b  
  D_{a+}^{\la} \left[f(s)\otimes g(s)\right] D_{b-}^{1-\la} B_{b-}(s) ds
\right\|    \\
&\le &\kappa \|B\|_{a, b, \be}  \int_a^b (b-s)^{\be+\la-1} 
 \left\| D_{a+}^{\la} \left[f(s)\otimes g(s)\right]
\right\|   ds\,. 
\end{eqnarray*}
From the definition of the Weyl derivative \eqref{e.weyl-der-left}, we see essily
\begin{eqnarray*}
&&\left\| D_{a+}^{\la} \left[f(s)\otimes g(s)\right]
\right\|\le \kappa \bigg\{ \|f\|_{a, b}\|g\|_{a,b} (s-a)^{-\la} \\
&&\qquad \qquad +
\left[\|f\|_{a, b}\|g\|_{a, b, \be}+\|g\|_{a, b}\|f\|_{a, b, \be}\right] (s-a)^{\be-\la}
\bigg\}\,. 
\end{eqnarray*}
Therefore, we have
\begin{eqnarray*}
&&\left\|\int_a^b f(s)\otimes g(s) dB(s) \right\| 
\le \kappa \|B\|_{a, b, \be}  \int_a^b (b-s)^{\be+\la-1} 
\bigg\{ \|f\|_{a, b}\|g\|_{a,b} (s-a)^{-\la} \\
&&\qquad   +
\left[\|f\|_{a, b}\|g\|_{a, b, \be}+\|g\|_{a, b}\|f\|_{a, b, \be}\right] (s-a)^{\be-\la}
\bigg\}    ds\\
&&\qquad \le \kappa \|B\|_{a, b, \be}   
\bigg\{ \|f\|_{a, b}\|g\|_{a,b}   \\
&&\qquad   +
\left[\|f\|_{a, b}\|g\|_{a, b, \be}+\|g\|_{a, b}\|f\|_{a, b, \be}\right] (b-a)^{\be }
\bigg\} (b-a)^\be 
\end{eqnarray*} 
which implies the lemma. 
\end{proof}

By a Picard iteration procedure and by the bounds that we are going to 
obtain, we can show that $\DD  \Ga(t )$ and $\DD  z(t )$ exist
under the hypothesis \ref{h.b-sigma}. 

From the equation \eqref{e.5.6},  we have
\begin{eqnarray*}
\DD  \Ga(t)
&=&I+\int_0^t \si_{xx}(s, z(s), \Ga(s)) \DD  z(s) \otimes \int_0^\cdot 
\phi(s, u) du ds\\
&&\qquad +\int_0^t \DD  \si_{x }(s, z(s), \Ga(s)) \DD  \Ga(s) \otimes \int_0^\cdot 
\phi(s, u) du ds 
\end{eqnarray*}
and 
\begin{eqnarray*}
\DD  z(t)
&=&\DD  \eta+\int_0^t \tilde b_{x }(s, z(s), \Ga(s)) \DD  z(s)   ds+
\int_0^t \DD   \tilde b (s, z(s), \Ga(s)) \DD  \Ga(s)   ds\\
&&  +\int_0^t \si_{x }(s, z(s), \Ga(s)) \DD  z(s)  \de B(s)\\
&&+
\int_0^t \DD   \si (s, z(s), \Ga(s)) \DD  \Ga(s)   \de B(s)    +\si(\cdot, z(\cdot), \Ga(\cdot)) I_{[0, t]}(\cdot)\nonumber\\
&&+
\int_0^t \si_{x }(s, z(s), \Ga(s)) \DD  z(s)  \int_0^s  \si_{x }(u, z(u), \Ga(u ))  
\phi(s,u) du ds   \\
&& +
\int_0^t \DD   \si (s, z(s), \Ga(s)) \DD  \Ga(s)  \int_0^s  \si_{x }(u, z(u), \Ga(u ))  
\phi(s,u) du ds  \\
&& +
\int_0^t   \si (s, z(s), \Ga(s))    \int_0^s  \si_{xx }(u, z(u), \Ga(u ))  \DD  z(u)
\phi(s,u) du ds\\
&&+
\int_0^t   \si (s, z(s), \Ga(s))    \int_0^s  \DD  \si_{x  }(u, z(u), \Ga(u ))  \DD  \Ga(u)
\phi(s,u) du ds\,. 
\end{eqnarray*}
Similar to the lemmas \ref{l.uniform-bound-f1} and \ref{l.uniform-bound-f2}, we can obtain 
\begin{lemma}\label{l.bounds-for-nablas} Under the hypothesis
\ref{h.b-sigma},   there is a $\kappa_B$ depending on  $\be, p, T$
and $\|B\|_{0, T, \be}$ such that 
\begin{eqnarray}
\|\frac{d}{dt} \DD  \Ga \|_{0, t}
&\le&  \kappa_B \left(\|\DD  z \|_{0, t} +\|\DD  \Ga \|_{0, t} \right)\,;
\label{e.5.58}\\
\|\DD  z\|_{a, b, \be}
&\le& \kappa_B \big(1+\|\DD  z\|_{0, a} +\|\DD  \Ga\|_{0, a}\nonumber\\
&&\quad +\|\DD  z\|_{a, b, \be} (b-a)^\be +\|\DD  \Ga\|_{a, b, \be} (b-a)^\be \big)\,.\label{e.5.59}
\end{eqnarray} 
\end{lemma}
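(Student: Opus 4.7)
The plan is to estimate each term in the explicit formulas for $\DD\Ga(t)$ and $\DD z(t)$ displayed just before the lemma, following the same three model templates established earlier: Lemma~\ref{l.uniform-bound-f1} for the $\HH$-valued characteristic $\Ga$, and the $F_{21}$, $F_{22}$, $F_{23}$ decomposition used in Lemma~\ref{l.uniform-bound-f2} for the spatial component $z$. Throughout, I would use that under Hypothesis~\ref{h.b-sigma} all of $\si_x$, $\si_{xx}$, $\tilde b_x$, $\DD\tilde b$, $\DD\si$, $\DD\si_x$ are uniformly bounded by $\cL$, and that $\si$, $\tilde b$ grow at most linearly in $x$.

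For the first estimate \eqref{e.5.58}, I would differentiate the integral formula for $\DD\Ga$ in $t$, which yields
\[
\frac{d}{dt}\DD\Ga(t) = \si_{xx}(t,z(t),\Ga(t))\,\DD z(t)\otimes\!\int_0^\cdot\!\phi(t,u)du + \DD\si_x(t,z(t),\Ga(t))\,\DD\Ga(t)\otimes\!\int_0^\cdot\!\phi(t,u)du.
\]
Taking the $\HH_p$-norm and using $p<1/(2-2H)$ so that $\int_0^T\phi(t,u)^p du \le \kappa$ (exactly as in the proof of Lemma~\ref{l.uniform-bound-f1}), together with the uniform bounds on $\si_{xx}$ and $\DD\si_x$, yields \eqref{e.5.58} immediately.

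For \eqref{e.5.59}, I would split $\DD z$ into the eight terms appearing in its explicit formula and bound each H\"older seminorm on $[a,b]$ separately. The five terms that are ordinary Lebesgue integrals in $ds$ (those built from $\tilde b_x$, $\DD\tilde b$, and the three $\phi(s,u)$ double integrals) are bounded by the $F_{21}$/$F_{23}$ template: an $(b-a)^{1-\be}$ factor from pulling $\sup$-norms out of a length-$(b-a)$ interval, followed by interpolation $\|f\|_{a,b}\le\|f\|_{0,a}+\|f\|_{a,b,\be}(b-a)^\be$. The term $\si(\cdot,z,\Ga)I_{[0,t]}$ contributes an inhomogeneous $(b-a)^{1-\be}$ piece controlled by the sup-bound on $z$ and $\Ga$ that has already been established in Theorem~\ref{t.5.7}. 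The two genuinely stochastic terms $\int \si_x \DD z\, \de B$ and $\int \DD\si\, \DD\Ga\, \de B$ are handled by the fractional calculus estimate of Proposition~\ref{p.integration-by-parts}, exactly as in the bound for $F_{22}$: writing the Riemann--Stieltjes integral as $(-1)^\al\int D^{1-\al}_{b-}B_{b-}\cdot D_{a+}^\al[\cdot]\,ds$ produces an $\|B\|_{a,b,\be}$ factor and contributes both a $\|\DD z\|_{a,b}+\|\DD\Ga\|_{a,b}$ piece multiplied by $(b-a)^\be$ and a $(\|\DD z\|_{a,b,\be}+\|\DD\Ga\|_{a,b,\be})(b-a)^{2\be}$ piece. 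Absorbing $\|B\|_{0,T,\be}$ into the constant $\kappa_B$ and one more interpolation on the sup-norms gives \eqref{e.5.59}.

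The main technical obstacle is the stochastic term $\int_0^t \DD\si(s,z(s),\Ga(s))\,\DD\Ga(s)\,\de B(s)$, since $\DD\Ga$ takes values in $\HH$ rather than $\RR$, so the product $\DD\si\cdot\DD\Ga$ must be interpreted as a tensor and the Weyl derivative $D_{a+}^\al[\DD\si(s,z(s),\Ga(s))\DD\Ga(s)]$ must be controlled in an appropriate operator/tensor norm. The cleanest route is to invoke the product-type estimate \eqref{e.product-holder-bound} already proved just before the statement of the lemma, which handles exactly this situation of a H\"older-in-$s$ product $f(s)\otimes g(s)$ integrated against $dB$. Once that estimate is applied with $f=\DD\si(s,z(s),\Ga(s))$ (H\"older in $s$ via the chain rule together with the H\"older continuity of $z$ and $\Ga$ supplied by Theorem~\ref{t.5.7}) and $g=\DD\Ga(s)$, the remainder of the argument is parallel to the analysis of $F_{22}$ in Lemma~\ref{l.F-diff-holder-norm-bound}, and the lemma follows by collecting constants.
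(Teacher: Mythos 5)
Your proposal is correct and follows essentially the same route as the paper: the paper's own (brief) proof likewise singles out the stochastic integral terms in the explicit formula for $\DD z$, notes that coefficients such as $\si_x(s,z(s),\Ga(s))$ are H\"older in $s$, and invokes the product bound \eqref{e.product-holder-bound} to produce the $\|B\|_{a,b,\be}$ factor, declaring the remaining (Lebesgue and double-$\phi$) terms analogous. Your treatment of \eqref{e.5.58} via the $\int_0^T\phi(t,u)^p du\le\kappa$ estimate and your handling of the $\HH$-valued tensor integrand are exactly the intended details the paper leaves implicit.
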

\begin{proof}Let us denote the integral terms in the above expression
for $\DD  z(t)$ by $I_k$, $k=1, 2, \cdots, 8$.   Let us explain how to bound 
$I_3=\int_0^t \si_{x }(s, z(s), \Ga(s)) \DD  z(s)  \de B(s)$.  
Since $z(s)$, $\Ga(s)$ are H\"older continuous
(of exponent $\be$ with respect to $s$), $f(s):=\si_{x }(s, z(s), \Ga(s))$ is then also
H\"older continuous. Now the inequality \eqref{e.product-holder-bound} can be invoked to obtain
\begin{eqnarray}
\|I_3\|_{a, b, \be}
&\le& \kappa \|B\|_{a, b, \be}   
\bigg\{ \|f\|_{a, b}\|\DD  z\|_{a,b} \nonumber \\
&&\qquad  +
\left[\|f\|_{a, b}\| \DD  z\|_{a, b, \be}+\|\DD  z\|_{a, b}\|f\|_{a, b, \be}\right] (b-a)^{\be }
\bigg\} \nonumber\\
&\le& \kappa_B    
\bigg\{  \|\DD  z\|_{a,b} +
\left[  \|\DD  z\|_{a, b, \be}+ \|\DD  z\|_{a, b, \be}\right] (b-a)^{\be }
\bigg\}
\end{eqnarray}
since  both $\|f\|_{a, b}\|$ and $\|f\|_{a, b, \be}$ are bounded by 
$\kappa_B$.  The other terms can be treated in exactly the same way.  
\end{proof}  
\begin{lemma}\label{l.nabla-gamma-z-bounds} Under the hypothesis,    there is a $\kappa_B$ depending on  $\be, p, T$
and $B_{0, T, \be}$ such that  
\begin{equation}
\|\DD  \Ga \|_{0, T}+\|\DD  z\|_{0, T}\le \kappa_B\,. 
\label{e.uniform-bound-nablas}
\end{equation}
\end{lemma}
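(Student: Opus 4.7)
The plan is to run the same interval-by-interval bootstrap that drives Theorem \ref{t.fix-point-theorem}, now applied to the essentially linear system for $(\DD\Ga,\DD z)$ obtained by differentiating \eqref{e.5.6}. Set
\[
u(t) := \|\DD\Ga\|_{0,t} + \|\DD z\|_{0,t}.
\]
Since $\DD\Ga(t)$ is the $t$-antiderivative of $\tfrac{d}{dt}\DD\Ga$ (in the relevant operator space), estimate \eqref{e.5.58} gives the two elementary consequences
\[
\|\DD\Ga\|_{a,b,\be} \le \kappa_B\, u(b)\,(b-a)^{1-\be}, \qquad \|\DD\Ga\|_{0,b} \le \|\DD\Ga(0)\| + \kappa_B\, u(b)\, T,
\]
where the H\"older seminorm is obtained by interpolating the Lipschitz bound against $(b-a)^\be$. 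Feeding this into \eqref{e.5.59} produces
\[
\|\DD z\|_{a,b,\be} \le \kappa_B\big(1+u(a) + \|\DD z\|_{a,b,\be}(b-a)^\be + \kappa_B u(b)(b-a)\big).
\]

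Next, fix $\Delta\in(0,T]$ small enough that $\kappa_B\Delta^\be \le 1/2$. On any $[a,a+\Delta]$ the last display then absorbs its own $\|\DD z\|_{a,b,\be}$ term, giving
\[
\|\DD z\|_{a,a+\Delta,\be} \le 2\kappa_B\big(1+u(a) + \kappa_B u(a+\Delta)\Delta\big).
\]
Using $\|\DD z\|_{0,a+\Delta} \le \|\DD z\|_{0,a}+\|\DD z\|_{a,a+\Delta,\be}\Delta^\be$ and the analogous bound $\|\DD\Ga\|_{0,a+\Delta}\le \|\DD\Ga\|_{0,a}+\kappa_B u(a+\Delta)\Delta$, then shrinking $\Delta$ once more so that the coefficient of $u(a+\Delta)$ on the right-hand side is at most $1/2$ and transposing it, one obtains a clean recursion
\[
u(a+\Delta) \le A\, u(a) + B,
\]
where $A$ and $B$ are constants depending only on $\kappa_B$ and $\Delta$ (hence only on $\be,p,T$ and $\|B\|_{0,T,\be}$).

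Iterating across the partition of $[0,T]$ into $N=\lceil T/\Delta\rceil$ equal subintervals yields
\[
u(T) \le A^N u(0) + B\,\frac{A^N-1}{A-1},
\]
which is finite since $u(0)=\|\DD\Ga(0)\|=\|I\|$ is a fixed constant, $N$ depends only on the data, and so does $A$. This gives the desired bound \eqref{e.uniform-bound-nablas}. The main subtlety I anticipate is purely bookkeeping: $\DD\Ga$ contributes a factor of $(b-a)$, not $(b-a)^\be$, to the right-hand side of the H\"older estimate for $\DD z$, so $\Delta$ must be chosen small enough to dominate \emph{both} scales simultaneously when the self-referential $u(a+\Delta)$ terms are absorbed; once this is arranged, the remainder is a discrete Gr\"onwall iteration identical in spirit to \emph{Step 3} of the proof of Theorem \ref{t.fix-point-theorem}, but now for a linear system and therefore with no smallness requirement on the time horizon beyond that already encoded in $\Delta$.
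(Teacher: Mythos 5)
Your proposal is correct and follows essentially the same route as the paper: the paper's (very terse) proof simply invokes the interval-decomposition, absorption, and discrete-Gr\"onwall machinery of Theorem \ref{t.fix-point-theorem} applied to the linearized system, which is exactly the bootstrap you carry out explicitly, with the coefficient bookkeeping from Lemma \ref{l.bounds-for-nablas} done correctly. The only (cosmetic) difference is that the paper runs these estimates uniformly on the Picard iterates $x_n$ so as to obtain existence of $\DD\Ga$ and $\DD z$ along the way, whereas you state them as an a priori bound on the solution itself.
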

\begin{proof}  To show the existence of $\DD  \Ga(t)$ and $\DD  z(t)$ 
and to show the above bound 
\eqref{e.uniform-bound-nablas} we still use the idea in the proof of Theorem 
\ref{t.fix-point-theorem}.   First, we can show that the $x_n$ defined there 
are $\HH$-differentiable and   similar bounds holds as those in  Lemma 
\ref{l.bounds-for-nablas} hold recursively for all $x_n$.  This can be used
to obtain the uniform bounds for all $n$.   Since the proof is analogous
that of Theorem \ref{t.fix-point-theorem}, we shall not provide it here again. 
\end{proof}

\begin{lemma}\label{l.5.13}  Let $G:\Om\rightarrow \HH$ be 
continuously $\HH$-differentiable mapping such that 
\[
\|\DD  G\|_\HH \le c<1 \,. 
\]
Define $\Ga:\Om\rightarrow \Om$ by 
\[
\Ga(\om)=\om+G(\om)\,.
\]
Then $\Ga$ has a (unique) inverse $\La$ such that 
$\Ga(\La(\om))=\La(\Ga(\om))=\om$.  
\end{lemma}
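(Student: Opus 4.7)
The plan is to recast the inversion problem as a fixed-point problem on the smaller Banach space $\HH$ (not on $\Om$), and then apply the Banach contraction principle. Given $\omega \in \Om$, solving $\Ga(\om') = \omega$ means $\om' + G(\om') = \omega$, i.e.\ $\om' = \omega - G(\om')$. Since $G$ takes values in $\HH$, any solution must have the form $\om' = \omega - h$ with $h \in \HH$, and the equation becomes $h = G(\omega - h)$. This is the key reformulation: instead of looking for a preimage in the large space $\Om$ (where we have no good norm to contract in), we look for $h \in \HH$, where the hypothesis $\|\DD G\|_{\HH} \le c < 1$ lives.

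Next I would define $T_\omega : \HH \to \HH$ by $T_\omega(h) = G(\omega - h)$ and show it is a contraction. For $h_1, h_2 \in \HH$, note that $(\omega - h_1) - (\omega - h_2) = h_2 - h_1 \in \HH$, so the straight-line segment between $\omega - h_1$ and $\omega - h_2$ lies in a single $\HH$-coset of $\omega$. By the $\HH$-differentiability of $G$ and the fundamental theorem of calculus,
\begin{equation*}
T_\omega(h_1) - T_\omega(h_2) = G(\omega - h_1) - G(\omega - h_2) = -\int_0^1 \DD G\bigl(\omega - h_2 + \theta(h_2 - h_1)\bigr)(h_1 - h_2)\,d\theta,
\end{equation*}
so $\|T_\omega(h_1) - T_\omega(h_2)\|_\HH \le c\,\|h_1 - h_2\|_\HH$. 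Banach's fixed point theorem then produces a unique $h_\omega \in \HH$ with $h_\omega = G(\omega - h_\omega)$. Setting $\La(\omega) = \omega - h_\omega$ gives $\Ga(\La(\omega)) = (\omega - h_\omega) + G(\omega - h_\omega) = \omega$.

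To verify $\La(\Ga(\om)) = \om$, observe that $\om = \Ga(\om) - G(\om)$ exhibits $\om$ as $\Ga(\om) - h$ with $h := G(\om) \in \HH$, and this $h$ clearly satisfies the fixed point equation $h = G(\Ga(\om) - h) = G(\om)$. By the uniqueness of the fixed point for $T_{\Ga(\om)}$, we must have $h_{\Ga(\om)} = G(\om)$, hence $\La(\Ga(\om)) = \Ga(\om) - G(\om) = \om$. Uniqueness of $\La$ follows from the uniqueness of the fixed point in each fiber.

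The only subtle point, and the one I would be careful about, is the legitimacy of the mean-value step: it requires that the segment $\{\omega - h_2 + \theta(h_2 - h_1) : \theta \in [0,1]\}$ stays in the domain of $\DD G$ and that the integral makes sense as an $\HH$-valued Bochner integral. This is guaranteed by continuous $\HH$-differentiability of $G$ on all of $\Om$ together with the fact that $h_2 - h_1 \in \HH$, so the directional derivative is applied to an admissible direction. Everything else is routine contraction-principle bookkeeping.
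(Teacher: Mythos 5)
Your proposal is correct and is essentially the paper's argument: the paper runs the Picard iteration $\La_{n+1}(\om)=\om-G(\La_n(\om))$, whose increments $\La_{n+1}(\om)-\La_n(\om)=-\bigl(G(\La_n(\om))-G(\La_{n-1}(\om))\bigr)$ lie in $\HH$ and contract by the factor $c$, which is exactly your contraction $T_\om(h)=G(\om-h)$ on $\HH$ unwound into an explicit iteration. Your explicit verification that $\La(\Ga(\om))=\om$ via uniqueness of the fixed point is a small tidy addition that the paper leaves implicit, but the substance is the same.
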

\begin{proof}
We define 
\[
\La_0(\om)=\om\,,\quad \La_{n+1}(\om)=\om-G(\La_n(\om))\,,
\quad n=0, 1, 2, \cdots\,.
\]
Then 
\begin{eqnarray*}
\left\|\La_{n+1}(\om)-\La_n(\om)\right\|_\HH
&=&\left\|G(\La_n(\om))-G(\La_{n-1}(\om))\right\|_\HH\\
&\le& c \left\|\La_{n }(\om)-\La_{n-1}(\om)\right\|_\HH\le \cdots \\
&\le& c ^n \left\|\La_{1 }(\om)-\La_{0}(\om)\right\|_\HH\,.
\end{eqnarray*}
This means that 
\[
\La_n(\om)-\om=\sum_{k=1}^n \left(\La_k(\om)-\La_{k-1}(\om)\right) 
\]
is a Cauchy sequence in $\HH$.  Thus $\La_n(\om)$ converges
to an element $\La(\om) $   in $\Om$. 
%for some mapping $u:\Om\rightarrow H$.  
%It is easy to see that
%$
%\La (\om)=\om-G(\La (\om))\,. 
%$\  Or $\La (\om)+G(\La (\om))=\om$.   
From the construction of $\La_n$ we see that $\La$ satisfies
\[
\La(\om)=\om -G(\La(\om))\,.
\]
Thus, 
\begin{eqnarray*}
\Ga(\La(\om))
&=& \La(\om)+G(\La(\om))\\
&=& \om -G(\La(\om))-G(\La(\om))=\om\,.
\end{eqnarray*}
This   exactly  means that $\La$ is the inverse of $\Ga$.  
\end{proof}

Now we can state the main theorem of this paper.
\begin{theorem}\label{t.5.14}  Let $b, \si:[0, T]\times \RR\times \Om\rightarrow 
\RR$ satisfy the hypothesis \eqref{h.b-sigma}.  Then, the equation 
\eqref{e.5.1} has a unique solution $x(t)$ up to some positive random  time  $\tau>0$.  This means that there is a unique  positive  random  time $\tau>0$ such that 
\begin{eqnarray}
x (t\wedge\tau ) = \eta+\int_0^{t\wedge\tau } 
 b(s, x (s), \om)d  s+
 \int_0^{t\wedge\tau } 
\si (s, x (s), \om)d B(s)   \,,\
  \forall \ t\in [0, T]\,. \nonumber\\
 \label{e.5.88}
\end{eqnarray}
\end{theorem}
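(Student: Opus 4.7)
\textbf{Proof proposal for Theorem \ref{t.5.14}.}
The plan is to build $x$ from the solution of the characteristic system \eqref{e.5.6} by inverting the transformation $\Gamma(t):\Omega\to\Omega$ and applying Lemma \ref{l.reduced-to-original}. First, Theorem \ref{t.5.7} already produces, pathwise in $\omega$, a unique global solution $(\Gamma,z)$ to \eqref{e.5.6} on $[0,T]$. Write $\Gamma(t,\omega)=\omega+G(t,\omega)$ where
\[
G(t,\omega)(s)=\int_0^t \tilde\sigma(r,z(r),\Gamma(r,\omega))\int_0^s\phi(r,u)\,du\,dr ;
\]
the argument used in Lemma \ref{l.uniform-bound-f1} shows that $G(t,\cdot)\in\HH$ with $G(0,\cdot)=0$, while Lemmas \ref{l.bounds-for-nablas} and \ref{l.nabla-gamma-z-bounds} ensure that $\DD G(t,\omega)$ exists as a bounded operator $\HH\to\HH$ and that $t\mapsto\|\DD G(t,\omega)\|_{\HH\to\HH}$ is continuous with $\|\DD G(0,\omega)\|_{\HH\to\HH}=0$ (the bound \eqref{e.5.58} gives $\|\frac{d}{dt}\DD\Gamma\|_{0,t}\le\kappa_B(\|\DD z\|_{0,t}+\|\DD\Gamma\|_{0,t})$, which integrated from $0$ yields $\|\DD G(t)\|_\HH\le C(B,\omega)\,t$ for small $t$).

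Second, define the (random) time
\[
\tau(\omega)=\inf\Bigl\{t\in[0,T]\,:\,\|\DD G(t,\omega)\|_{\HH\to\HH}\ge\tfrac12\Bigr\}\wedge T,
\]
which is strictly positive by the previous paragraph. For every $t\in[0,\tau]$ Lemma \ref{l.5.13}, applied to the map $\omega\mapsto\omega+G(t,\omega)$, produces a unique inverse $\Lambda(t):\Omega\to\Omega$ with $\Gamma(t,\Lambda(t,\omega))=\omega$. Differentiability of $\Lambda(t)$ in $t$ with values in $\HH$ follows from the implicit relation $\Lambda(t)=\omega-G(t,\Lambda(t))$: since $I+\DD G(t,\Lambda(t))$ is invertible on $\HH$ (Neumann series, using $\|\DD G\|_{\HH\to\HH}\le 1/2$) and $\partial_t G(t,\cdot)\in\HH$, one obtains
\[
\partial_t\Lambda(t,\omega)=-\bigl(I+\DD G(t,\Lambda(t,\omega))\bigr)^{-1}\partial_t G(t,\Lambda(t,\omega))\in\HH.
\]
Lemma \ref{l.reduced-to-original} then applies on $[0,\tau]$ and gives that $x(t):=z(t,\Lambda(t))$ solves the reduced equation \eqref{e.5.5}, which by \eqref{e.ito-pathwise-relation} is the same as \eqref{e.5.1}, thereby establishing existence in \eqref{e.5.88}.

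For uniqueness, suppose $\tilde x$ is another solution of \eqref{e.5.1} on $[0,\tau]$. I would reverse the construction by defining $\tilde\Gamma(t,\omega):=\omega+\int_0^t\tilde\sigma(s,\tilde x(s),\omega)\int_0^\cdot\phi(s,u)du\,ds$ and $\tilde z(t,\omega):=\tilde x(t,\tilde\Gamma(t,\omega))$; applying the new Itô formula \eqref{e.ito-formula} (with $\Gamma=\tilde\Gamma$ and the choice of $h$ coming from the derivative of $\tilde\Gamma$) one can verify that $(\tilde\Gamma,\tilde z)$ satisfies the characteristic system \eqref{e.5.6}, so that by the uniqueness part of Theorem \ref{t.5.7} we must have $(\tilde\Gamma,\tilde z)=(\Gamma,z)$ and hence $\tilde x=x$ on $[0,\tau]$.

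The step I expect to be the main obstacle is the rigorous verification that $\|\DD G(t,\omega)\|_{\HH\to\HH}$ is genuinely small for small $t$ (so that Lemma \ref{l.5.13} can be invoked) and the accompanying check that $\Lambda(t)$ is $C^1$ into $\HH$; both require more than the uniform bound \eqref{e.uniform-bound-nablas} from Lemma \ref{l.nabla-gamma-z-bounds}, namely an $O(t)$-type estimate near $t=0$, obtained by integrating the Grönwall-free bound \eqref{e.5.58} once $\|\DD z\|_{0,t}+\|\DD\Gamma\|_{0,t}$ is controlled by $\kappa_B$. The reverse construction used in the uniqueness argument is also delicate, because it requires that the new Itô formula of Theorem \ref{t.3.2} applies to the composition $\tilde x(t,\tilde\Gamma(t,\omega))$, and one must verify that $\tilde\Gamma$ and $\tilde x$ enjoy the regularity hypotheses of that theorem.
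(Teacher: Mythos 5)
Your existence argument follows the paper's proof in all essentials: reduce to the characteristic system \eqref{e.5.6} (Theorem \ref{t.5.7}), show $\|\DD G(t,\om)\|_{\HH\to\HH}=O(t)$ by integrating the derivative bounds of Lemmas \ref{l.bounds-for-nablas} and \ref{l.nabla-gamma-z-bounds}, invert $\Ga(t)$ for small $t$ via Lemma \ref{l.5.13}, and conclude with Lemma \ref{l.reduced-to-original}. The one real difference is how the positive random time is produced: the paper first stops $B$ at the level-$R$ time $\tau_R$ and replaces it by the truncated path $B_R$, so that the constant $c_R$ in $\|\DD G(t)\|\le c_R t$ is deterministic and the solution lives on $[0,\,t_0\wedge\tau_R]$ with $t_0=1/(2c_R)$; you instead keep the random constant $\kappa_B$ and define $\tau$ directly as the first time $\|\DD G(t,\om)\|$ reaches $1/2$. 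Since the whole analysis is pathwise, your variant is legitimate and arguably cleaner; the truncation buys the paper a constant that is uniform over $\om$ in the event $\{\tau_R\ge t\}$, which is convenient but not essential here.

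The uniqueness argument, however, contains a genuine flaw. You set $\tilde\Ga(t,\om)=\om+\int_0^t\tilde\si(s,\tilde x(s),\om)\int_0^\cdot\phi(s,u)\,du\,ds$, with the coefficient frozen at the base point $\om$. But the first equation of \eqref{e.5.6} is self-referential: the integrand must be $\tilde\si(s,z(s),\tilde\Ga(s))=\tilde\si\bigl(s,\tilde x(s,\tilde\Ga(s,\om)),\tilde\Ga(s,\om)\bigr)$, i.e.\ evaluated \emph{along the characteristic curve}, not at $\om$. With your definition, $(\tilde\Ga,\tilde z)$ will not satisfy \eqref{e.5.6}, so the appeal to the uniqueness part of Theorem \ref{t.5.7} does not go through. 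To repair it one must first solve the ODE $\frac{d}{dt}\tilde\Ga(t)=\tilde\si(t,\tilde x(t,\tilde\Ga(t)),\tilde\Ga(t))\int_0^\cdot\phi(t,u)\,du$ in $\Om$ (which requires regularity of $\tilde x$ in $\om$ that is not granted by merely assuming $\tilde x$ solves \eqref{e.5.1}), and then justify the It\^o formula for the composition $\tilde x(t,\tilde\Ga(t))$. It is worth noting that the paper's own proof does not carry out this reverse construction either: it establishes existence and the uniqueness of the constructed objects $(\Ga,z,\La)$, but does not verify that an arbitrary solution of \eqref{e.5.5} must arise in this way. So your instinct that uniqueness needs a separate argument is correct, but the argument as sketched does not yet close the gap.
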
 
\begin{proof} 
 According to Lemma \ref{l.reduced-to-original}  the remaining main task   is to show the existence 
of an  inverse $\La(t)$  of $\Ga(t)$.  Since the existence and uniqueness of the solution of the system \eqref{e.5.6} is known by Theorem
\ref{t.5.7},  we only need to  consider the first equation of the system \eqref{e.5.6}, which is 
\[
\Gamma(t)=\om+\int_0^t  \tilde \si (s, z(s), \Gamma(s))\int_0^\cdot \phi(s,
u)du ds\,.
\]
Due to the presence of
$z(t)$ in the above equation it is hard to prove that $\Ga(t)$ has 
an inverse for all $t\in [0, T]$.   In fact, since $z(t)=z(t, \om)$ has been proved to exist, we may write the above equation as
\[
\Gamma(t)=\om+\int_0^t \hat  \si (s, \om, \Gamma(s))  ds\,,
\]
where  $\hat \si(s,\om, \Ga)=   \tilde \si (s, z(s, \om), \Gamma )\int_0^\cdot \phi(s,
u)du ds$ is mapping from $[0, T]\times \Om\times \Om\rightarrow \HH$.  
[As earlier we may replace $\Ga(t)-\om$ by $\Ga(t)$ so that $\hat \si$ is a 
mapping from $[0, T]\times \Om\times \HH\rightarrow \HH$.] 
Or  we can write the following differential equation in $\HH$:  
\[
\dot \Gamma(t)=  \hat \si (t, \om, \Gamma(t))   \,.
\]
The dependence on $\om$ in the coefficient $\hat  \si$ may prevent the solution
$\Ga(t)$ to have inverse for all time $t$. To explain we give one example 
in one dimension ($\dim(\Om)=\dim(\HH)=1$).  We let $\hat \si(t, \om, \Ga)=
-\om -\Ga$.  Then the solution to the equation with initial condition
$\Ga(0)=\om$ is explicitly given by $\Ga(t,\om)=2\om e^{-t} -\om$.  But 
$\Ga(t, \om)=0$  when $t= \ln 2$.  So,  $\Ga(t, \om)$  is not invertible
when $t= \ln 2$.   Due to the above example, we are only seeking the inverse 
of $\Ga(t)$ when $t$ is sufficiently small (but strictly positive). 

We shall prove that $\Ga(t):\Om\rightarrow \Om$ has an inverse when $t$ is sufficiently small. Given an arbitrarily  fixed positive number $R$,  we define the following  random  time:  
\begin{equation}
\tau_R=\tau_R(\om)=\inf\left\{ t>0\,, \quad |B(t, \om)|>R\,, 
\|B\|_{0,t, \be }>R\right\}\,.
\end{equation} 
Since  we have chosen a version of the Brownian motion $B$ such that
$B(t)$ is H\"older continuous, we see that  $0<\tau_R<\infty$ for all $\om\in \Om$.  

Now we define  
\begin{equation}
B_R(t,\om)=\begin{cases}
B(t,\om)&\qquad \quad \hbox{when \ $0\le t\le \tau_R$}\\
B(\tau_R, \om) &\qquad \quad \hbox{when \ $  t\ge \tau_R$}\,.
\end{cases}
\end{equation}
 
It is clear that
\[
\sup_{0\le t\le T} |B_R(t)|\le R
\quad{\rm  and}\quad \|B_R\|_{a, b,\be}\le 2R
\quad \hbox{ for any $0\le a<b\le T$}\,.
\]
Now we consider the equation \eqref{e.5.6} with $B$ replaced by
$B_R$ and the corresponding solutions are  replaced by $\Ga_R$ and $z_R$: 
\begin{equation}
\begin{cases}
\Gamma_R(t)=\om+\int_0^t  \tilde \si (s, z_R(s), \Gamma_R(s))\int_0^\cdot \phi(s,
u)du ds\,;\\ \\
z_R(t)= \eta(\om ) +\int_0^t \tilde b(s, z_R(s), \Gamma_R(s))ds
+\int_0^t \si(s, z_R(s),
\Gamma_R(s))\de B_R(s)\\ 
\qquad\qquad  +\int_0^t\int_0^s  \si(s, z_R(s), \Ga_R(s)) \tilde \si(u, z_R(u),
\Ga_R(u)) \phi(s, u) duds\,. 
\end{cases} \label{e.5.64}
\end{equation}
By the inequality \eqref{e.uniform-bound-nablas} we see that
\begin{equation*}
\|\DD  \Ga_R \|_{0, T}+\|\DD  z_R\|_{0, T}\le c_R \,, 
\end{equation*}
where $ c_R$  is a constant independent of $B$ (then independent of $\om$).   This combined with Lemma \ref{l.bounds-for-nablas} yields
\begin{equation}
\|\frac{d}{ds}\DD  \Ga(s)\|_{0, T}\le c_R\,.
\label{e.5.67}
\end{equation}  
On the other hand, we  can write 
\[
\DD  \Ga(t)=I+G(t,\om)\,,\quad\hbox{where}\quad 
G(t,\om)=\int_0^t \frac{d}{ds}\DD  \Ga(s) ds\,. 
\]
The inequality \eqref{e.5.67} implies that 
\[
\|\DD G(t, \om)\|\le c_Rt\le 1/2\,,\quad \hbox{if}\ \ 
t\le 1/(2c_R)\,.
\] 
Thus, from Lemma \ref{l.5.13} it follows that 
  $\Ga(t):\Om\rightarrow \Om$ has 
a (unique)  inverse $\La(t)$ when $t\le t_0:=1/(2c_R)$. 
Thus,  the system of equation \eqref{e.5.64}
 has a unique solution such that $\Ga_R(t)$ has a
 (unique) inverse $\La_R(t)$.  By Lemma 
 \ref{l.reduced-to-original} we see that
 $x_R(t)=z_R(t, \La_R(t))$ is a solution to
 \begin{eqnarray*}
x_R(t)&=&\eta+\int_0^t \tilde b(s, x_R(s), \om)ds+
\int_0^t \si(s, x_R(s), \om)\de B_R(s)\nonumber\\
&&\qquad 
 -\int_0^t \tilde \si (s, x(s), \om)\DD^\phi_s x_R(s) ds\,,
 \quad 0\le t\le t_0\,. 
  \end{eqnarray*}
But when $t\le  \tau_R$, $B_R(t)=B(t)$.  Then
when $t\le  t_0\wedge\tau_R$, we have
\begin{eqnarray*}
x_R(t)&=&\eta+\int_0^t \tilde b(s, x_R(s), \om)ds+
\int_0^t \si(s, x_R(s), \om)\de  B(s)\nonumber\\
&&\qquad 
 -\int_0^t \tilde \si (s, x(s), \om)\DD^\phi_s x_R(s) ds\,,
 \quad 0\le t\le t_0\wedge\tau_R\,. 
  \end{eqnarray*}
This can also be written as
\begin{eqnarray*}
x_R(t\wedge\tau_R)&=&\eta+\int_0^{t\wedge\tau_R} 
 b(s, x_R(s), \om)d  s+
 \int_0^{t\wedge\tau_R} 
\si (s, x_R(s), \om)d B(s)  \,. 
  \end{eqnarray*}
The theorem is then proved. 
\end{proof}

\section{Linear and quasilinear  cases}
\subsection{Quasilinear case} 
If the diffusion coefficient $\si$ satisfies 
\begin{equation}
\si(s, x, \om)=a_1(s, \om) x+a_0(s, \om) \,,
\label{e.si-quasilinear} 
\end{equation}
then we say equation \eref{e.ito-equation}  is {\it quasilinear stochastic
differential equation }driven by fractional Brownian motion.
Associated with this equation, 
the corresponding system    of equations  \eref{e.5.6} can be  written as
\begin{equation}
\begin{cases}
\Gamma(t)=\om+\int_0^t  a_1 (s, \Gamma(s))\int_0^\cdot \phi(s,
 u)du ds\\ \\
z(t)=\eta+\int_0^t \tilde b(s, z(s), \Gamma(s))ds +\int_0^t \si(s, z(s),
\Gamma(s))\de B(s)\\
\ \qquad \qquad +\int_0^t \si(s, z(s), \Ga(s)) \int_0^s a_1(u, \Ga(u)) \phi(s, u) du ds\,,
\end{cases}
\label{e.quasilinear.transformed} 
\end{equation}
where 
\begin{equation}
\tilde b(t,x,\om)=\tilde b(s, x, \om):=b(s, x, \om)- x \DD^\phi_s  \si_1(s,   \om) -\DD^\phi_s  \si_0 
(s,   \om)\,. 
\end{equation} 
This system is decoupled. We  can first  solve the above first equation.  
%\begin{equation}
%\Gamma(t)=\om+\int_0^t  a (s, \Gamma(s))\int_0^\cdot \phi(s,
% u)du ds
%\end{equation}
\begin{proposition}
Assume  that $a_1(t,\om)$ is uniformly Lipschitz in $\om$ with respect to  
$\HH$ norm. Namely,   there is a positive constant $\kappa$ such that 
\begin{equation}
|a_1(t, \om+h)-a_1(t, \om)|\le \kappa \|h\|_\HH\,, \qquad
\forall \ t\in [0, T]\,,\ \, \om\in \Om\,, \ h\in \HH\,. 
\end{equation}
 Then the first equation in \eref{e.quasilinear.transformed}   has a unique solution 
 $\Ga(t)$.  For all $t\in [0, T]$,  $\Ga(t):\Om\rightarrow \Om$ has an inverse
 $\La(t)$.  Moreover, 
% if $\frac{d}{dt}a_1(t,\om)$ 
% exists and is uniformly Lipschitz in $\om$ with respect to the 
%$\HH$ norm: 
%\begin{equation}
%|a_1(t, \om+h)-a_1(t, \om)|\le \kappa \|h\|_\HH\,, \qquad
%\forall \ t\in [0, T]\,,\ \, \om\in \Om\,, \ h\in \HH\,. 
%\end{equation}
%Then the  inverse,
% $\La(t)$ is differentiable in $t$ and  
the inverse $\La(t)$ is given by 
 $\La(t)=\La(t,t)$, where  $\left\{\La(\cdot, t)\right\}$ satisfies 
 \begin{equation}
  \La(s,t)=\om +\int_0^s  a_1 (t-v, \La(v, t))\int_0^\cdot \phi(v,
u)du dv\,, \quad 0\le s\le t\,. 
\label{e.inverse-gamma} 
 \end{equation}
\end{proposition}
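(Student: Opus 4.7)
The plan is to handle the three claims (existence and uniqueness of $\Gamma$, existence and uniqueness of $\Lambda(\cdot,t)$, and the inversion identity $\Lambda(t)\circ\Gamma(t)=\Gamma(t)\circ\Lambda(t)=\mathrm{id}$) by Picard/Gronwall arguments in the affine space $\omega+\HH$. Writing $f_s(\cdot):=\int_0^\cdot \phi(s,u)\,du$, the choice $p\in(1,1/(2-2H))$ ensures $\sup_{s\in[0,T]}\|f_s\|_\HH<\infty$, as already observed in Lemma \ref{l.uniform-bound-f1}. The first equation of \eqref{e.quasilinear.transformed} is decoupled from $z$ and may therefore be viewed as a nonautonomous ODE $\dot\Gamma(s)=a_1(s,\Gamma(s))\,f_s$ in $\omega+\HH$ whose right-hand side is Lipschitz in the state variable with constant at most $\kappa\sup_s\|f_s\|_\HH$. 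A standard Gronwall estimate in $\|\cdot\|_\HH$ then gives
\[
\|\Gamma_1(t)-\Gamma_2(t)\|_\HH \le \kappa\sup_s\|f_s\|_\HH \int_0^t \|\Gamma_1(s)-\Gamma_2(s)\|_\HH\,ds
\]
for any two candidate solutions, forcing uniqueness, and Picard iteration produces a global solution $\Gamma\in C([0,T];\omega+\HH)$.

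For each fixed $t\in[0,T]$, the equation defining $\Lambda(\cdot,t)$ has exactly the same structure with $a_1(s,\cdot)$ replaced by $a_1(t-s,\cdot)$. Since this time shift preserves the $\HH$-Lipschitz constant of $a_1$ in the $\omega$-slot, the same Gronwall/Picard argument yields a unique $\Lambda(\cdot,t)\in C([0,t];\omega+\HH)$, so that $\Lambda(t):=\Lambda(t,t)$ is a well-defined map $\Omega\to\Omega$.

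The delicate point is verifying the inversion identity. I would interpret $\omega\mapsto\Gamma(t,\omega)$ as the time-$t$ flow map of the nonautonomous Banach-space ODE above; classical flow theory for Lipschitz vector fields then shows that $\Gamma(t,\cdot)$ is a homeomorphism of $\omega+\HH$ (and hence of $\Omega$) onto itself, with compositional inverse equal to the backward flow. The remaining step---and the principal technical obstacle---is to identify this backward flow with $\Lambda(t,t)$ as parametrized in \eqref{e.inverse-gamma}: introducing the auxiliary path $\mu(s):=\Gamma(s,\Lambda(t,\omega))$, computing its derivative, performing the change of variables $s\mapsto t-s$ so as to put the resulting ODE into the form of \eqref{e.inverse-gamma}, and then invoking the uniqueness from the previous step gives $\mu(t)=\omega$ and hence $\Gamma(t)\circ\Lambda(t)=\mathrm{id}$; the reverse composition follows from the analogous calculation. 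The careful bookkeeping of the time-reversal, so that the factor $f_v$ and the shifted argument $a_1(t-v,\cdot)$ in \eqref{e.inverse-gamma} are matched correctly with the backward flow, is where the main effort lies.
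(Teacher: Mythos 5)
Your approach is essentially the paper's: the paper's entire proof is an appeal to ``general dynamic system theory'' for the existence, uniqueness and two-parameter flow property of the Lipschitz ODE $\dot\Ga(s)=a_1(s,\Ga(s))f_s$ in the affine space $\om+\HH$ (with $f_s=\int_0^\cdot\phi(s,u)\,du$), followed by ``this can be used to show the proposition easily''; you are supplying the Picard/Gr\"onwall details behind that appeal, and those details are correct. The one substantive caveat concerns precisely the step you defer as ``the principal technical obstacle.'' If you actually perform the time reversal $\mu(s):=\gamma(t-s)$ on a forward trajectory $\gamma$, the backward flow comes out as
\[
\La(s,t)=\om-\int_0^s a_1\bigl(t-v,\La(v,t)\bigr)\int_0^\cdot\phi(t-v,u)\,du\,dv\,,
\]
i.e.\ with a minus sign and with $\phi(t-v,u)$ rather than $\phi(v,u)$; the formula \eqref{e.inverse-gamma} as printed does not reduce to this. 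A quick sanity check is the deterministic example of Section 6, where $\Ga(t)$ is a pure translation and the inverse is $\om-\int_0^\cdot\bigl(\int_0^t a(s)\phi(s,u)\,ds\bigr)du$, which agrees with the corrected formula but not with \eqref{e.inverse-gamma} as stated. So the ``careful bookkeeping'' you flag is not merely delicate: carried out literally it shows the displayed parametrization of $\La$ needs a sign and argument correction, and the uniqueness argument you invoke to conclude $\Ga(t)\circ\La(t)=\mathrm{id}$ only closes once you verify it against that corrected equation rather than against \eqref{e.inverse-gamma} verbatim.
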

\begin{proof} From the general dynamic system theory we see that
for any $t_0\in [0, T]$,  there is a unique solution $\Ga(t)=\Ga(t, t_0, \om)$
such that the first equation of \eqref{e.quasilinear.transformed} has a unique solution
for all $t\in [0, T]$ (even when $t<t_0$) such that $\Ga(t_0, t_0, \om)=\om$ and
$ \Ga(t, t_0, \om)$ satisfies the flow property:
\[
\Ga(t, s, \Ga(s, t_0, \om))=\Ga(t+s, t_0, \om)\,,  \
\forall \ t_0, s, t\in [0, T]\ \hbox{such that $s+t\in [0, T]$}\,. 
\]
This can be used to show the proposition easily. 
\end{proof} 
\def\bb{\mathfrak{b}}  
Once we obtain $\Gamma(t, \om)$ we can substitute it into the second
equation in \eref{e.quasilinear.transformed} to obtain the following equation
\begin{equation}
z(t)=\eta+\int_0^t \bb (s, z(s), \om)ds +\int_0^t \si(s, z(s),
\Gamma(s))\de B(s)\,, \label{e.quasilinear6.6}
\end{equation}
where
\begin{eqnarray*}
\bb (s, z , \om)
&=& \tilde b(s, z, \Ga(s))+\si(s, z , \Ga(s)) \int_0^s a_1(u, \Ga(u)) \phi(s, u) du \\
&=& b(s, x, \Ga(s))- x \DD^\phi_s  \si_1(s,   \Ga(s)) -\DD^\phi_s  \si_0 
(s,   \Ga(s))\\
&&\qquad 
+\si(s, z , \Ga(s)) \int_0^s a_1(u, \Ga(u)) \phi(s, u) du\,. 
\end{eqnarray*}
Now we can use Lemma \ref{l.reduced-to-original} to obtain the following theorem.
\begin{theorem} Let  the diffusion coefficient $\si$  be given by 
\eqref{e.si-quasilinear}.  Let $\Ga$ be the unique solution to the first 
equation of \eqref{e.quasilinear.transformed}.  Let $z$ be the unique solution
to the second equation of  \eqref{e.quasilinear.transformed}.
Then \eqref{e.ito-equation} has a unique solution $x(t)$ which is given by
\begin{equation}
x(t)=z(t, \La(t))\,. 
\end{equation} 
Moreover,    there are   positive constants $c_1$ and $c_2$, $\De\in [0, T]$, 
depending only on $p, \be, T$ such that   for all 
$  \ 0\le a<b\le T, \ b-a\le \De$ 
\begin{equation}
\begin{cases}\sup_{0\le t\le T} |x(t)|
 \le   c_2 \exp\left\{ c_1 \|B\|_{0, T, \be}^{1/\be}  \right\} \\ \\
\|x\|_{a, b, \be} 
 \le  c_2 \exp\left\{ c_1 \|B\|_{0, T, \be}^{1/\be}  \right\}  \,. 
 \end{cases} \label{e.quasilinear-bound}
\end{equation} 
\end{theorem}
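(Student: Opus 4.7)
The plan is to assemble the statement from the preceding proposition, Lemma \ref{l.reduced-to-original}, and Theorem \ref{t.5.7}, exploiting the quasilinear structure to upgrade the local-in-time result of Theorem \ref{t.5.14} to a global one. First I would invoke the preceding proposition to conclude that the first equation of \eqref{e.quasilinear.transformed} admits a unique solution $\Ga$ whose associated map $\Ga(t):\Om\rightarrow\Om$ has an inverse $\La(t)$ valid for \emph{every} $t\in [0,T]$, given by \eqref{e.inverse-gamma}. This is the step where quasilinearity matters: because $\tilde\si(s,x,\om)=a_1(s,\om)$ does not depend on $z$, the flow for $\Ga$ decouples from $z$ and is globally invertible, unlike in the fully nonlinear setting of Theorem \ref{t.5.14} where the inverse existed only up to a random time. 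The explicit representation \eqref{e.inverse-gamma} also yields that $\La(t)$ is continuously differentiable in $t$ with respect to $\|\cdot\|_\HH$, using $\phi(v,\cdot)\in L^p$ by the choice $p<1/(2-2H)$ together with the boundedness of $a_1$.

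Once $\Ga$ is known, the second equation of \eqref{e.quasilinear.transformed} reduces to \eqref{e.quasilinear6.6}, whose unique pathwise solvability together with the H\"older and uniform bounds \eqref{e.final-uniform-bound-solution}--\eqref{e.final-holder-bound-solution} is supplied by Theorem \ref{t.5.7}. Applying Lemma \ref{l.reduced-to-original} with $\tau = T$ then gives that $x(t):=z(t,\La(t))$ solves the reduced equation \eqref{e.5.5}, equivalently the original It\^o equation \eqref{e.ito-equation}, on the whole interval $[0,T]$. For uniqueness I would argue in reverse: any solution $\tilde x$ of \eqref{e.ito-equation} defines, through $\tilde z(t,\om)=\tilde x(t,\Ga(t,\om))$ and the already fixed $\Ga$, a pair $(\Ga,\tilde z)$ that satisfies \eqref{e.quasilinear.transformed}; the uniqueness part of Theorem \ref{t.5.7} forces $\tilde z=z$, so $\tilde x(t)=z(t,\La(t))=x(t)$.

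For the bounds \eqref{e.quasilinear-bound} the idea is to inherit them from those on $z$. The uniform bound is immediate, since \eqref{e.final-uniform-bound-solution} controls $|z(t,\om)|$ uniformly in $\om$, hence
\[
\sup_{0\le t\le T}|x(t)|=\sup_{0\le t\le T}|z(t,\La(t))|\le c_2\exp\bigl\{c_1\|B\|_{0,T,\be}^{1/\be}\bigr\}.
\]
For the H\"older bound I would split
\[
x(b)-x(a)=\bigl[z(b,\La(b))-z(a,\La(b))\bigr]+\bigl[z(a,\La(b))-z(a,\La(a))\bigr];
\]
the first bracket is at most $\|z(\cdot,\La(b))\|_{a,b,\be}(b-a)^\be$, controlled by \eqref{e.final-holder-bound-solution}, and the second by $\|\DD z(a,\cdot)\|_\HH\,\|\La(b)-\La(a)\|_\HH$, where Lemma \ref{l.nabla-gamma-z-bounds} bounds $\|\DD z\|_{0,T}$ and \eqref{e.inverse-gamma} together with boundedness of $a_1$ bounds $\|\La(b)-\La(a)\|_\HH$ by a multiple of $(b-a)$.

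The main obstacle is checking that all pieces in the composition estimate depend on $\om$ only through $\|B\|_{0,T,\be}$ and combine into the claimed exponential form. The exponential behaviour is already present in Theorem \ref{t.5.7}, so what must be verified is that neither $\|\DD z\|_{0,T}$ nor the $\HH$-regularity of $\La$ contribute growth worse than $c_2\exp\{c_1\|B\|_{0,T,\be}^{1/\be}\}$. For $\La$ this is clear from \eqref{e.inverse-gamma} since $a_1$ is bounded independently of $B$; for $\DD z$ this follows from Lemma \ref{l.nabla-gamma-z-bounds} by a further application of the fractional Picard iteration of Theorem \ref{t.fix-point-theorem} to the equations satisfied by $\DD z$ and $\DD\Ga$, where the constant $\kappa_B$ can be tracked to be of the same exponential form.
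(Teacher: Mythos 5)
Your proposal is correct and follows essentially the same route as the paper: the paper's own proof is a one-line reduction to the bounds \eqref{e.final-uniform-bound-solution}--\eqref{e.final-holder-bound-solution} of Theorem \ref{t.5.7} ``together with an easy bound for $\frac{d}{dt}\La(t)$,'' relying implicitly on the preceding proposition for global invertibility of $\Ga$ and on Lemma \ref{l.reduced-to-original} for the representation $x(t)=z(t,\La(t))$. Your write-up simply fills in the details the paper leaves implicit (the splitting of $x(b)-x(a)$, the use of $\|\DD z\|_{0,T}$ and the Lipschitz-in-$t$ bound on $\La$ coming from \eqref{e.inverse-gamma}), so no further comparison is needed.
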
 
\begin{proof} The first inequality in \eqref{e.quasilinear-bound} 
is a direct consequence 
of \eqref{e.final-uniform-bound-solution}  and the above second inequality
is the consequence of \eqref{e.final-holder-bound-solution}  together with an easy
bound for $\frac{d}{dt} \La(t)$.  
\end{proof} 

Let us now try to solve    equation   \eqref{e.quasilinear6.6}, which 
 can be written as
\[
z(t)=\eta+\int_0^t \bb(s, z(s), \om)ds +\int_0^t \left[a_1(s, \Ga(s))
z(s) +a_0(s, \Ga(s))\right] \de B(s)\,.
\]
Namely, 
\begin{equation}
z(t)-\int_0^t \left[a_1(s, \Ga(s))
z(s) +a_0(s, \Ga(s))\right] \de B(s)=\eta+\int_0^t \bb(s, z(s), \om)ds \,.
\label{e.6.8}
\end{equation}
Or 
\begin{equation}
dz(t)- \left[a_1(t, \Ga(t))
z(t) +a_0(t, \Ga(t))\right] \de B(s)=  \bb(t, z(t), \om)dt \,.
\label{e.6.9}
\end{equation}
Let 
\begin{equation}
\begin{cases}
 A_1(t)=\exp\left\{-\int_0^t a_1(s, \Ga(s)) \de B(s)\right\} \\
 A_2(t)= \int_0^t A_1(s) \ a_0(s, \Ga(s))  \de B(s)\,.  
\end{cases}\label{e.a1-a2}
\end{equation}
Denote 
\begin{equation}
y(t)=A_1(t)z(t)- A_2(t)\,.
\end{equation}
Then  the equation \eqref{e.6.8} can be written as
\begin{eqnarray}
dy(t)
&=&   A_1(t) \left\{ dz(t)- \left[a_1(t, \Ga(t))
z(t) +a_0(t, \Ga(t))\right] \de B(t)  \right\}\nonumber\\
&=&  A_1(t) \bb(t, z(t), \om) dt\nonumber\\ 
&=&  \cB(t, y(t), \om) dt\,,
\end{eqnarray}
where 
\begin{equation}
 \cB(t, y)=A_1(t)\bb(t,  A_1^{-1}(t) (y+A_2(t)), \om) 
 \label{e.def-cB}
 \end{equation}
with $A_1^{-1}(t)=\exp\left\{\int_0^t a_1(r, \Ga(r)) \de B(r)\right\}$.  
This equation is a (pathwise) ordinary differential equation 
and can be solved by classical method. 
 
 To summarize here is how we can solve the quasilinear equation of  the following form 
 \begin{equation}
 dx(t)=b(t, x(t), \om) dt+\left[a_1(t, \om) x(t)+a_0(t, \om)\right] d B(t)\,,
 \quad x(0)=\eta(\om)\,. 
 \end{equation}
 \begin{enumerate}
 \item[(i)] First we solve the first equation in \eqref{e.quasilinear.transformed}
 to obtain $\Ga(t)$. 
 \item[(ii)] Then we solve \eqref{e.inverse-gamma} to obtain the inverse $\La(t)
 =\La(t,t)$ of
 $\Ga(t)$.   
 \item[(iii)] Define $A_1$ and $A_2$ by \eqref{e.a1-a2}. 
 \item[(iv)]  Define $\cB(t,y)=\cB(t, y, \om)$ by \eqref{e.def-cB} and solve the (ordinary
 differential) equation 
 $\dot y(t)=\cB(t, y(t))\,, \ y(0)=\eta $.  
  \item[(v)]   Let $z(t,\om)= A_1^{-1}(t, \om) (y(t,\om)+A_2(t, \om))$.  
\item[(vi)]  The solution $x(t)$ is then given by 
$x(t,\om)=z(t, \La(t,\om))$.  
 \end{enumerate}
\begin{remark} If $b$ and $\si$ are deterministic,  then the system of equations  
\eqref{e.5.6} becomes  
\begin{equation}
\begin{cases}\Gamma(t)=\om+\int_0^t  \tilde \si  (s, z(s), \Gamma(s))\int_0^\cdot \phi(s,
u)du ds\,;\\  
z(t)= \eta(\om ) +\int_0^t \tilde b(s, z(s) )ds
+\int_0^t \si(s, z(s) )\de B(s)\\ 
\qquad\qquad  +\int_0^t\int_0^s  \si(s, z(s) ) \tilde \si (u, z(u) ) \phi(s, u) duds\,. 
\end{cases} \label{e.6.14a}
\end{equation}
This system of equations is also decoupled. One may first solve the above second equation to obtain $z(t,\om)$  and then substitute it into the above first equation to  obtain  $\Ga(t)$.  However, the main diffculty  remains to study the invertibility of $\Ga(t):\Om\rightarrow \Om$, which is hard as explained in the proof of Theorem \ref{t.5.14}.     
\end{remark}

 \subsection{Linear case} 
 
If $\si$ is linear  as in the  previous subsection and if $b$ is also linear, i.e.
\begin{equation}
\begin{cases}
\si(s, x, \om)=a_1(s, \om) x+a_0(s, \om) \\
b(s,x,\om)=\be_1(s, \om) x+\be_0(s, \om) \,, 
\end{cases} 
\end{equation}
then the equation  is called linear equation.  

$\Gamma$ and $\La$ can be found in the same way as in the 
 quasilinear case.
As we shall see that  $z$ also satisfies a linear equation,
 we explain how to obtain the explicit form for 
$z(t)$.  First,  notice that the second equation in \eqref{e.quasilinear.transformed}
becomes 
\begin{eqnarray}
z(t)
&=&\eta+\int_0^t \be_1 (s, \Ga(s)) z(s) ds+\int_0^t \be_0(s, \Ga(s))ds\nonumber\\
&&\qquad -\int_0^t \left[\DD^\phi_s a_1\right](s, \Ga(s)) z(s)ds -\int_0^t
\left[\DD^\phi_s a_0\right](s, \Ga(s)) ds \nonumber\\
&&\qquad +\int_0^t a_1(s, \Ga(s)) z(s)\de
B(s)+\int_0^t a_0(s, \Ga(s)) \de B(s)\nonumber\\
&&\qquad +\int_0^t a_1(s, \Ga(s))\int_0^s a_1(u, \Ga(u)) \phi(s, u) du z(s) ds \nonumber\\
&&\qquad +\int_0^t a_0(s, \Ga(s))\int_0^s a_1(u, \Ga(u)) \phi(s, u) du   ds\,.
\label{e.linear-z}
\end{eqnarray}
Introduce
\begin{eqnarray}
\Phi(t,s)
&:=&\exp\bigg(\int_s^t \left[ \be_1(u, \Ga(u))-(\DD^\phi_u a_1)(u,
\Ga(u))\right]du
+\int_s^t a_1(u, \Ga(u))\de B(u)\nonumber\\
&&\qquad +\int_s^t a_1(u, \Ga(u))\int_0^u a_1(v, \Ga(v)) \phi(u, v) dv   du \bigg)\,.
\end{eqnarray}
Then the above equation \eref{e.linear-z}  can be solved explicitly as
\begin{eqnarray}
z(t)
&=&\Phi(t, 0) \eta +\int_0^t \Phi(t, s)\left[\be_0(s, \Ga(s))-(\DD^\phi_s
a_0)(s, \Ga(s))\right]ds\nonumber \\
&&\qquad+\int_0^t \Phi(t,s) a_0(s, \Ga(s)) \de B(s)\nonumber \\
&&\qquad +\int_0^t 
\Phi(t,s) a_0(s, \Ga(s)) \int_0^s a_1(v, \Ga(v)) \phi(s, v) dvds\,.
\label{e.linear-z} 
\end{eqnarray}
Thus the solution becomes
\begin{eqnarray}
x(t)
&=& z(t, \La(t))\,. 
\end{eqnarray}
\begin{example}
If $b(s, x,\om)=b(s)x$ and $\si(s,x,\om)=a(s) x$ and $\eta=x_0 $,
where $b(s)$ and $a(s)$ are    deterministic function of $s$, then 
the first equation of \eqref{e.quasilinear.transformed} becomes
\[
\Ga(t)=\om+\int_0^t a (s)\int_0^\cdot \phi(s,u) duds \,.
\]
Thus 
\begin{equation}
\La(t)=\om-\int_0^\cdot h(t, u)du\,, 
\end{equation} 
where 
\begin{equation}
h(t, u)=\int_0^t a (s) \phi(s, u) ds\,. 
\end{equation}
Since $\DD^\phi a  =0$,   we have
\begin{equation}
\Phi(t,s)=\exp\left\{ \int_s^t b (u) ds+\int_s^t a (u) \de B(u)
+\int_s^t a(u)\int_0^u a(v) \phi(u,v) dv
du\right\}\,. 
\end{equation}
Now since $\DD^\phi b=0$ and $\be_0=a_0=0$,  we have
\begin{eqnarray*}
z(t)
&=&\Phi(t, 0) x_0\,.  
% +\int_0^t \Phi(t, s) \be_0(s) ds +\int_0^t \Phi(t,s) a_0(s) \de B(s)\,.
\end{eqnarray*} 
Since $\Phi(t,s)=\Phi(t,s, \om)$ still depends on $\om$.  Using lemma \ref{t.3.1}, we have
\begin{eqnarray}
\Psi(t,s)
&:=&\Phi(t, s, \La(t))\nonumber\\
%&=& \exp\bigg\{ \int_s^t b(u) ds+\int_s^t a (u) \de B(u)-\int_s^t a (u)
%h(t, u) du \nonumber\\
%&&\qquad +\int_s^t a(u)\int_0^u a(v) \phi(u,v) dv
%du
%\bigg\}\nonumber\\
&=& \exp\bigg\{ \int_s^t b(u) ds+\int_s^t a (u) \de B(u)-\int_s^tdu \int_u^t 
a (u)  a (s) \phi(s,u)ds\bigg\} 
 \nonumber \\
%&&\qquad +\int_s^t a(u)\int_0^u a(v) \phi(u,v) dv
%du
\label{e.def-Psi} 
\end{eqnarray}
Thus the solution to 
\begin{equation}
dx(t)=b(t) x(t) dt+ a(t)x(t) dB(t)
\end{equation}
is given by 
\begin{eqnarray}
x(t)
&=&\Psi(t, 0) x_0  \nonumber\\
&=&  \exp\bigg\{ \int_0^t b(u) ds+\int_0^t a (u) \de B(u)\nonumber\\
&&\qquad -\frac12 
\int_0^t  \int_0^t 
a (u)  a (s) \phi(s,u) duds 
\bigg\}  x_0\,.  
\end{eqnarray}
%Let us point out that if  $\be_0=a_0=0$,  then the above argument yields that 
%\begin{eqnarray}
%x(t)
%&=& \exp\bigg\{ \int_0^t \be_1(u) ds+\int_0^t a_1(u)  d B(u)
%\nonumber \\
%&&\qquad -\int_0^tdu \int_0^t 
%a_1(u)  a_1(s) \phi(s,u)ds
%\bigg\} x_0 
%\end{eqnarray}
%is the (unique) solution to 
%\begin{equation}
%dx(t)=\be_1 (t) x(t)dt+a_1(t) x(t) dB(t)\,. 
%\end{equation} 
%When $\be_1(t)=\be $ and $a_1(t)=a $ are constants,  the solution is
%given by 
%\begin{equation}
%x(t)=\exp\left\{ \be t +a B(t)-\frac12 a^2 t^{2H} \right\}\,. 
%\end{equation}
This is well-known (see for example \cite{BHOZ, duncan, humams, hustochastics, huoksendal}).  
\end{example}

\begin{example} In the ame way as above example, we can solve the following 
linear stochastic differential equation
\begin{equation}
dx(t)=[b(t) x(t) +\be(t) ]dt+ [a(t)x(t) +\al(t)] dB(t)\,,\quad x(0)=x_0\,, 
\end{equation}
where $  x_0\in \RR $, $b(t), \be(t), a(t) , \al(t)$ are deterministic functions,
 to obtain
\begin{eqnarray}
x(t)
&=&\Psi(t, 0) x_0+\int_0^t \Psi(t, s) \be (s) ds 
+\int_0^t \Psi(t,s) \al(s ) \de B(s)\nonumber \\
&&\qquad +\int_0^t 
\Psi(t,s) \al (s ) \int_0^s a (v ) \phi(s, v) dvds\,,
\label{e.linear-z-solution} 
\end{eqnarray}
where $\Psi$ is given by \eqref{e.def-Psi}.  
\end{example}

\end{document}